\documentclass[11pt,a4paper]{amsart}
\usepackage{amsmath}
\usepackage{amssymb}
\usepackage{graphicx}
\usepackage[all]{xy}
\usepackage{latexsym}
\usepackage{mathrsfs}
\usepackage{fancyhdr}
\usepackage{enumerate}
\usepackage[english]{babel}
\usepackage{lipsum}
\usepackage{etoolbox}
\usepackage{colonequals}
\usepackage{multicol}
\usepackage[T1]{fontenc}
\usepackage[colorlinks=true,linkcolor=Maroon, citecolor=LimeGreen, pagebackref]{hyperref} 
\usepackage[usenames,dvipsnames]{xcolor}
\usepackage{lipsum}
\usepackage{mwe}
\usepackage{enumitem}
\usepackage{float}
\usepackage[normalem]{ulem}
 \usepackage{tikz}
\usetikzlibrary{arrows,decorations.pathmorphing,decorations.pathreplacing,positioning,shapes.geometric,shapes.misc,decorations.markings,decorations.fractals,calc,patterns}

\tikzset{>=stealth',
     cvertex/.style={circle,draw=black,inner sep=1pt,outer sep=3pt},
     vertex/.style={circle,fill=black,inner sep=1pt,outer sep=3pt},
     star/.style={circle,fill=yellow,inner sep=0.75pt,outer sep=0.75pt},
     tvertex/.style={inner sep=1pt,font=\criptsize},
     gap/.style={inner sep=0.5pt,fill=white}}

\newcommand{\arrowrl}[3][20]
{
\hspace{-5pt}
\begin{tikzpicture}
\node (A) at (0,0) {};
\node (B) at (1,0) {};
\draw[->] ($(A)+(0,0.2)$) -- node [above] {$\scriptstyle f^*$} ($(B)+(0,0.2)$);
\draw [->] ($(B)+(0,0.2)$) -- node [below] {$\scriptstyle f_*$} ($(A)+(0,0.2)$);
\end{tikzpicture}
\hspace{-5pt}
}
\newcommand{\adj}[2][20]{\arrowrl}

\newcommand{\bC}{\mathbb{C}}

\newcommand{\bN}{\mathbb{N}}

\newcommand{\bZ}{\mathbb{Z}}

\newcommand{\cC}{\mathcal{C}}
\newcommand{\cT}{\mathcal{T}}
\newcommand{\cA}{\mathcal{A}}

\newcommand{\mcm}{\mathrm{MCM}}

\usepackage[all]{xy}
 \usepackage{tikz-cd}

 \parindent 5mm

\usepackage[left=2.5cm, top=3cm, right=2.5cm, bottom=3cm, bindingoffset=0.5cm]{geometry}



\newcommand{\Z}{\ensuremath{\mathbb{Z}}}
\newcommand{\C}{\ensuremath{\mathbb{C}}}

\newcommand{\CC}{\ensuremath{\mathcal{C}}} 
\newcommand{\T}{\ensuremath{\mathcal{T}}} 
\newcommand{\cF}{\mathcal{F}}

\DeclareMathOperator{\add}{add}

\DeclareMathOperator{\Coker}{Coker}

\DeclareMathOperator{\Hom}{Hom}
\DeclareMathOperator{\Ext}{Ext}

\DeclareMathOperator{\Spec}{Spec}


\newcommand{\CM}{\operatorname{{MCM}}}

\newcommand{\mc}[1]{\ensuremath{\mathcal{#1}}}   


\usepackage{collectbox}    

\makeatletter
\newcommand{\sqbox}{%
    \collectbox{%
        \@tempdima=\dimexpr\width-\totalheight\relax
        \ifdim\@tempdima<\z@
            \fbox{\hbox{\hspace{-.5\@tempdima}\BOXCONTENT\hspace{-.5\@tempdima}}}%
        \else
            \ht\collectedbox=\dimexpr\ht\collectedbox+.5\@tempdima\relax
            \dp\collectedbox=\dimexpr\dp\collectedbox+.5\@tempdima\relax
            \fbox{\BOXCONTENT}%
        \fi
    }%
}
\makeatother



\theoremstyle{theorem}
\newtheorem{theorem}{Theorem}[section]         
\newtheorem{lemma}[theorem]{Lemma}
\newtheorem{cor}[theorem]{Corollary}
   
\newtheorem{proposition}[theorem]{Proposition}

\theoremstyle{remark}
\newtheorem{remark}[theorem]{Remark}

\newtheorem{ex}[theorem]{Example}

\theoremstyle{definition}
\newtheorem{dfn}[theorem]{Definition} 

\numberwithin{equation}{section}

\setcounter{section}{0}
\setcounter{tocdepth}{2}



\title{Cluster structures for the $A_\infty$ singularity}

\author[August]{Jenny August}

\author[Cheung]{Man Wai Cheung}

\author[Faber]{Eleonore Faber}


\author[Gratz]{Sira Gratz}

\author[Schroll]{Sibylle Schroll}

\date{\today}
\begin{document}

\begin{abstract}
We study a category $\cC_2$ of $\bZ$-graded MCM modules over the $A_\infty$ curve singularity and demonstrate it has infinite type $A$ cluster combinatorics. In particular, we show that this Frobenius category (or a suitable subcategory) is stably equivalent to the infinite type $A$ cluster categories of Holm--J{\o}rgensen, Fisher and Paquette--Y{\i}ld{\i}r{\i}m. As a consequence, $\cC_2$ has cluster tilting subcategories modelled by certain triangulations of the (completed) $\infty$-gon. We use the Frobenius structure to extend this further to consider maximal almost rigid subcategories, and show that these subcategories and their mutations exhibit the combinatorics of the completed $\infty$-gon.
\end{abstract}

\maketitle

\section{Introduction}
    
    Throughout cluster theory, type $A$ serves as a prototypical example to understand new concepts. This paper formalises the connection between infinite rank type $A$ (completed) cluster combinatorics and the corresponding plane curve singularity. 

    This builds on work by \cite{JKS16} Jensen, King and Su, who give a correspondence between finite type $A$ cluster algebras and the corresponding hypersurface singularities. More precisely, a category of maximal Cohen-Macaulay modules over a curve singularity of type $A_n$ encodes the combinatorics of a cluster algebra of type $A_n$. A key aspect of this combinatorics is that it is fully described by triangulations of a regular $(n+3)$-gon. In the categorical setting, indecomposable objects correspond to arcs in the polygon, and triangulations give cluster tilting objects. Consequently, mutation is encoded by diagonal flips, with exchange sequences determined by the ambient quadrilateral.
    
    From a combinatorial perspective, it is natural to extend this to an $\infty$-gon: Take a discrete set of points on the unit circle $S^1$ with one two-sided accumulation point. We may think of the points as being indexed by the integers. The notions of arcs, triangulations and diagonal flips extend directly from the finite case, with arcs being labelled by pairs of integers. A central challenge of this infinite rank setting is that the exchange graph of triangulations under finite sequences of diagonal flips is no longer connected. Combinatorially, this issue can be fixed by instead considering transfinite mutations in the \emph{completed} $\infty$-gon, as studied by Baur and Gratz in \cite{BG}, and independently by \c{C}anak\c{c}{\i} and Felikson in \cite{CF19}. In the completed $\infty$-gon we label the accumulation point by $-\infty$, and allow arcs between $-\infty$ and any integer, called \emph{infinite arcs}. In this paper we prove that a category associated to the type $A_\infty$ curve singularity exhibits both the combinatorics  of the $\infty$-gon and of the completed $\infty$-gon.
    
    Specifically, we consider the category $\cC_2$ of $\mathbb{Z}$-graded maximal Cohen-Macaulay modules over $\mathbb{C}[x,y]/(x^2)$ with $x$ in degree $1$ and $y$ in degree $-1$. This is the Grassmannian category of infinite rank associated to an infinite version of the Grassmannian of planes, as introduced by the authors in \cite{ourfirstpaper}. We focus on this specific Grassmannian category because it is tame, allowing us to explicitly describe and classify cluster tilting and related subcategories, as well as their mutations.

    Constructing cluster categories with infinite type $A$ combinatorics is a natural problem which has been tackled from different angles throughout the literature, starting with the pioneering paper by Holm and J{\o}rgensen \cite{HJ-cat}. They prove that the finite derived category $\mathrm{D^f}(S)$ of differential graded (dg) modules over the dg-algebra $S=\mathbb{C}[y]$ (with $y$ in cohomological degree $-1$) exhibits the cluster combinatorics of the $\infty$-gon. We prove that a natural subcategory of $\cC_2$ recovers the Holm-J{\o}rgensen category $\mathrm{D^f}(S)$. More precisely, we show the following.
    
    \begin{theorem}[Corollary \ref{C: link to HJ}]
        Denote by $\cC_2^{\mathrm{f}}$ the subcategory of generically free modules in $\cC_2$ (see Definition \ref{D: generically free}). Then its stable category $\underline{\cC}_2^{\mathrm{f}}$ is equivalent to $\mathrm{D^f}(S)$.
    \end{theorem}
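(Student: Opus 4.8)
\emph{Proof strategy.} The plan is to realise both sides as the perfect derived category of one and the same small, \emph{intrinsically formal}, differential graded algebra, so that no higher $A_\infty$-data ever has to be compared. Concretely this algebra will be $B:=\bC[\epsilon]/(\epsilon^2)$ with $\epsilon$ in (cohomological) degree $2$ and zero differential; since $\epsilon$ sits in nonzero degree there is no room for higher products on $B$, so \emph{any} dg algebra with cohomology $B$ is quasi-isomorphic to $B$. This intrinsic formality is the soft input that makes the whole argument go through.

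\emph{The Holm--J{\o}rgensen side.} Write $\bC:=S/(y)$ for the residue module. Since $S=\bC[y]$ is a polynomial ring in one variable (with $y$ in cohomological degree $-1$), the Koszul complex $S\xrightarrow{\,y\,}S$ is, up to a grading shift, a finite free resolution of $\bC$, so $\bC$ is a compact object of $\mathrm D(S)$. A finite-dimensional graded $\bC[y]$-module is annihilated by a power of $y$, hence has a finite filtration with subquotients (shifts of) $\bC$; it follows that every object of $\mathrm D(S)$ with finite-dimensional total cohomology lies in the thick subcategory generated by $\bC$, i.e.\ $\mathrm{D^f}(S)=\operatorname{thick}_{\mathrm D(S)}(\bC)$. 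Keller's Morita theorem for algebraic triangulated categories then gives a triangle equivalence $\mathrm{D^f}(S)\simeq\operatorname{per}(\operatorname{REnd}_S(\bC))$, and a direct computation with the length-one Koszul resolution shows $\operatorname{REnd}_S(\bC)\cong B$ (one has $\Ext^i_S(\bC,\bC)=\bC$ for $i\in\{0,2\}$ and $=0$ otherwise, with $\epsilon$ dual to $y$). Hence $\mathrm{D^f}(S)\simeq\operatorname{per}B$; this half is essentially contained in \cite{HJ-cat} and may be quoted.

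\emph{The singularity side.} Here I would use the matrix-factorisation description of the MCM modules over the hypersurface $R:=\bC[x,y]/(x^2)$, together with the classification, established earlier in the paper, of the indecomposables of $\cC_2^{\mathrm f}$: up to grading shift these are the modules $M_n:=\Coker\!\left(\begin{smallmatrix}x&y^n\\0&-x\end{smallmatrix}\right)$, $n\ge 1$ --- exactly the objects labelled by the \emph{finite} arcs of the $\infty$-gon --- while the indecomposables of $\cC_2$ that are \emph{not} generically free are precisely the grading shifts of $R/(x)$, the infinite arcs, which is why passing to $\cC_2^{\mathrm f}$ removes them. Two things then have to be checked. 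First, $\underline{\cC}_2^{\mathrm f}=\operatorname{thick}(M_1)$: since $R$ is a hypersurface of degree $2$, the suspension satisfies $\Sigma^2\cong(\pm2)$ (the grading twist), so $\Sigma$ permutes the shortest-arc objects transitively and its powers applied to $M_1$ exhaust them, after which a dévissage along the mesh of the Auslander--Reiten quiver ($\bZ A_\infty$, computed earlier) produces everything else. Second, $\operatorname{REnd}_{\underline{\cC}_2^{\mathrm f}}(M_1)\cong B$, i.e.\ $\underline{\Hom}(M_1,M_1)=\bC$, $\underline{\Hom}(M_1,\Sigma M_1)=0$ (the arc objects are rigid) and $\underline{\Hom}(M_1,\Sigma^2M_1)=\bC$, with the remaining stable $\Ext$-groups vanishing --- a direct computation from the morphism structure and Serre-type duality of $\underline{\cC}_2$ developed in the earlier sections. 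As $B$ is intrinsically formal, Keller's theorem again yields $\underline{\cC}_2^{\mathrm f}\simeq\operatorname{per}B$ (both categories being algebraic and idempotent-complete), and composing with the previous step gives the desired triangle equivalence $\underline{\cC}_2^{\mathrm f}\simeq\mathrm{D^f}(S)$. One should finally record that the resulting equivalence is the expected one: it carries the arc module $M_n$ (suitably twisted) to the Holm--J{\o}rgensen indecomposable at the corresponding arc and intertwines $\Sigma$ with the rotation, which is forced by the way $M_1$, resp.\ $\bC$, generates.

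\emph{Main obstacle, and a shortcut.} The substantial step is the singularity-side bookkeeping: proving that $M_1$ is a classical generator and pinning down the \emph{exact} values of the stable $\Ext$-groups $\underline{\Hom}(M_1,\Sigma^iM_1)$, rather than merely matching the additive/combinatorial skeleton (arcs and AR quiver); intrinsic formality of $B$ is precisely what spares us from comparing any higher or $A_\infty$-structure. A secondary point is to confirm that ``generically free'' is exactly the condition cutting out $\operatorname{thick}(M_1)$ inside $\underline{\cC}_2$ --- equivalently, that the grading shifts of $R/(x)$ are precisely the indecomposables outside this thick subcategory. Finally, I note that once the paper's main comparison theorem identifying $\underline{\cC}_2^{\mathrm f}$ (or $\underline{\cC}_2$, restricted to generically free objects) with the cluster categories of Holm--J{\o}rgensen, Fisher and Paquette--Y{\i}ld{\i}r{\i}m is in place, the Corollary is immediate, since the Holm--J{\o}rgensen cluster category is by construction $\mathrm{D^f}(S)$.
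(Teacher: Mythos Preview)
Your approach is correct but genuinely different from the paper's. The paper does \emph{not} work inside $\underline{\cC}_2^{\mathrm f}$ at all: it first proves the stronger statement $\underline{\cC}_2 \cong \mathrm{Perf}(S)$ by taking the \emph{infinite}-arc object $M=\bC[y]$ as a classical generator of the whole stable category, computing $\Ext^*(M,M)\cong\bC[y]$, and invoking intrinsic formality of $S=\bC[y]$ (proved via $\mathrm{proj.dim}_{S^e}S=1$). The corollary then drops out in one line by observing that under this equivalence the generically free indecomposables $(x,y^k)(j)$ are exactly the dg $S$-modules with finite-dimensional total cohomology, i.e.\ the objects of $\mathrm{D^f}(S)$. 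Your route instead stays inside $\underline{\cC}_2^{\mathrm f}$, picks the shortest finite arc $M_1$ as generator, and compares both sides to $\mathrm{per}(B)$ with $B=\bC[\epsilon]/(\epsilon^2)$, $|\epsilon|=2$---the Koszul dual picture. The paper's approach is more economical here (one generator, one formality lemma, and the restriction step is free), and it simultaneously yields the ambient equivalence needed later for the Fisher and Paquette--Y{\i}ld{\i}r{\i}m comparisons; your approach has the virtue of being intrinsic to $\cC_2^{\mathrm f}$ and never leaving the $2$-Calabi--Yau world, at the cost of having to verify generation and the $\Ext$-computation for $M_1$ separately.

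Two small points to tighten. First, your justification that $B$ is intrinsically formal (``$\epsilon$ sits in nonzero degree'') needs strict unitality to go through: once $m_n$ vanishes on any input equal to $1$, the parity/degree count you sketch does force $m_{\geq 3}=0$. Second, you appeal to $\Sigma^2\cong(2)$ from the hypersurface periodicity, but to get that $\Sigma$ acts transitively on the shortest-arc objects you actually need $\Sigma\cong(1)$; this holds here (the matrix factorisation of $M_1$ is symmetric, and the paper records it as Remark~\ref{R : suspension=gradingshift}), so just cite that instead.
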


    In \cite{ourfirstpaper} we show that in general the category of generically free modules of a Grassmannian category of infinite rank exhibits the combinatorics of Pl\"ucker coordinates in the homogeneous coordinate ring of the corresponding Grassmannian. This subcategory is always $2$-Calabi-Yau, and as is such an ideal setting to study rigidity and cluster tilting. 
    
    The category $\cC_2$ coming from the $A_\infty$ curve singularity is a particularly well-behaved example of a Grassmannian category of infinite rank:  It has been studied as an isolated line singularity by Siersma \cite{Siersma} in the 1980s and it was shown by Buchweitz--Greuel--Schreyer \cite{BGS} that type $A_\infty$ and $D_\infty$ are the only hypersurface singularities of countable Cohen--Macaulay type. In particular, \cite{BGS} classifies the isomorphism classes of indecomposable MCM-modules via matrix factorizations in both cases. In this paper, we show that there is a full classification of the indecomposable objects in $\cC_2$ given by arcs in the completed $\infty$-gon. That same combinatorics was discovered in a different set-up by Fisher in \cite{F16}, who completed the Holm--J{\o}rgensen category under homotopy colimits to obtain a triangulated subcategory $\overline{\mathcal{D}}$ of the derived category of the dg-algebra $S$.
    
    Igusa and Todorov \cite{ITcyclic} have generalised the idea of cluster categories of infinite type $A$ in a combinatorial manner, by extending the notion of $\infty$-gon. Building on this, Paquette and Y{\i}ld{\i}r{\i}m \cite{PY21} present a combinatorial completion, yielding triangulated categories now containing indecomposable objects corresponding to arcs starting or ending in accumulation points. In particular, in the one-accumulation point case, the indecomposable objects in the Paquette-Y{\i}ld{\i}r{\i}m category $\overline{\cC}_M$ are indexed in the same way as the indecomposable objects in $\cC_2$ as well as in the Fisher category. In fact, we prove the following.
    
    \begin{theorem}[Propositions \ref{P:Fisher}, \ref{P:PY}]\label{T:Equivalence2}
        There are equivalences of triangulated categories
        \[
            \underline{\cC}_2 \cong \overline{\cC}_M \cong \overline{\mathcal{D}},
        \]
        where $\underline{\cC}_2$ denotes the stable category of the Grassmannian category $\cC_2$.
    \end{theorem}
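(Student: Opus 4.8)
The plan is to establish the two equivalences separately, as Propositions \ref{P:Fisher} and \ref{P:PY}, and then concatenate them. The common input is the classification of the indecomposable objects of $\underline{\cC}_2$ by arcs in the completed $\infty$-gon together with the already available analogous classifications for $\overline{\mathcal{D}}$ and $\overline{\cC}_M$; thus at the level of objects all three categories carry the same indexing set, and the real work lies in matching morphism spaces, the suspension functor, and the triangulated structure.

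For $\underline{\cC}_2 \cong \overline{\mathcal{D}}$, I would build on Corollary \ref{C: link to HJ}, i.e. $\underline{\cC}_2^{\mathrm{f}} \cong \mathrm{D^f}(S)$. Fisher's category $\overline{\mathcal{D}}$ is by construction the closure of $\mathrm{D^f}(S)$ under (countable) homotopy colimits inside $D(S)$, so it suffices to exhibit $\underline{\cC}_2$ as the parallel closure of $\underline{\cC}_2^{\mathrm{f}}$. Concretely I would: (i) realise $\underline{\cC}_2$ inside a compactly generated triangulated category with coproducts in which the objects of $\underline{\cC}_2^{\mathrm{f}}$ are compact; (ii) show that every indecomposable of $\underline{\cC}_2$ — in particular every module attached to an infinite arc — is the homotopy colimit of a sequence of objects of $\underline{\cC}_2^{\mathrm{f}}$ attached to finite arcs that exhaust it, using the matrix factorization descriptions and the explicit presentations of the ``infinite'' modules; and (iii) conclude by functoriality and essential uniqueness of the homotopy-colimit completion, extending $\underline{\cC}_2^{\mathrm{f}} \cong \mathrm{D^f}(S)$ to the completions.

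For $\underline{\cC}_2 \cong \overline{\cC}_M$, I would argue by a direct combinatorial comparison. Define an additive functor sending the indecomposable of $\underline{\cC}_2$ indexed by an arc to the indecomposable of $\overline{\cC}_M$ indexed by the same arc, and check: (a) bijectivity on isomorphism classes of indecomposables (immediate from the two classifications); (b) compatibility on morphisms, by matching Paquette--Yıldırım's combinatorial description of $\Hom$ in $\overline{\cC}_M$ (governed by crossing/nesting patterns of arcs) with the computations of $\Hom$ and $\Ext^1$ in $\underline{\cC}_2$ carried out in the earlier sections; (c) intertwining of suspensions, identifying the cosyzygy/degree-shift operation on MCM modules with the combinatorial rotation defining the shift of $\overline{\cC}_M$; and (d) that the images of the defining triangles of $\overline{\cC}_M$ (its AR-triangles together with the ``connecting'' triangles through the accumulation point) are triangles in $\underline{\cC}_2$, i.e. come from conflations of MCM modules in the Frobenius exact structure. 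Fullness, faithfulness and density then follow from (a) and (b).

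The main obstacle is (c)--(d): verifying that the triangulated structures, not merely the underlying additive categories and Hom-spaces, coincide. On the $\underline{\cC}_2$ side triangles are rotations of conflations in the Frobenius exact structure, while on the $\overline{\cC}_M$ side they are produced by Paquette--Yıldırım's combinatorial recipe; bridging these requires writing down enough conflations of MCM modules explicitly — the exchange conflations associated to arc flips and those encoding the accumulation-point behaviour — and checking that their images generate all triangles. The Fisher comparison carries a parallel difficulty: one must ensure $\underline{\cC}_2$ is closed under exactly the homotopy colimits used by Fisher and no more, so the two completions genuinely agree; this is precisely where the tameness of $\cC_2$ and the explicit control of its indecomposables via matrix factorizations becomes essential.
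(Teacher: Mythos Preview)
Your plan takes a genuinely different route from the paper's, and the paper's route is both shorter and avoids the obstacles you flag. The paper never attempts to build explicit functors between $\underline{\cC}_2$, $\overline{\mathcal{D}}$, and $\overline{\cC}_M$. Instead it proves a single reusable criterion (Proposition~\ref{P:generator}): any algebraic triangulated category possessing a generator $M$ with $\Ext^*(M,M)\cong\bC[y]$ as a graded algebra (with $y$ in degree $-1$) is equivalent to $\mathrm{Perf}(S)$. This rests on the intrinsic formality of $\bC[y]$ (Lemma~\ref{L:intrinsically formal}) together with Keller's derived Morita theory. The theorem then reduces to exhibiting such a generator in each of $\underline{\cC}_2$, $\overline{\mathcal{D}}$ and $\overline{\cC}_M$ and computing its graded endomorphism ring --- a short calculation in each case using the Hom-descriptions already available in \cite{F16} and \cite{PY21}. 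All three categories are thereby identified with $\mathrm{Perf}(S)$, and hence with one another, with the triangulated structure coming for free from Keller's theorem.

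Your approach, by contrast, has real gaps at exactly the points you identify as obstacles. For the Fisher comparison, step~(iii) --- ``essential uniqueness of the homotopy-colimit completion'' --- is not a citable statement: the closure under homotopy colimits depends on the ambient category, and there is no a priori reason the closure of $\underline{\cC}_2^{\mathrm{f}}$ inside whatever compactly generated category you embed it in agrees with the closure of $\mathrm{D^f}(S)$ inside $D(S)$. Making this precise would require extending the equivalence $\underline{\cC}_2^{\mathrm{f}}\cong\mathrm{D^f}(S)$ to the ambient level, which is essentially the content of the theorem. For the Paquette--Y{\i}ld{\i}r{\i}m comparison, steps (a)--(d) do not assemble into a triangle functor: prescribing images of indecomposables and matching Hom-spaces dimensionwise does not define a functor (one must also match compositions coherently), and even granting an additive equivalence, verifying that a list of ``generating'' triangles is preserved does not imply that all distinguished triangles are, since a bare triangulated category has no workable notion of generating triangles. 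In both directions one ultimately needs a dg-level comparison --- and that is precisely what the intrinsic-formality argument delivers in one stroke.
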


    The categories $\overline{\cC}_M$ and $\overline{\mathcal{D}}$ were constructed explicitly with the goal to obtain a categorical analogue to the combinatorics of a completed $\infty$-gon. Our result
    proves that these combinatorics are not at all artificial: They actually occur in a very natural way in an algebro-geometric setting.
    Furthermore, we immediately get a classification of cluster tilting subcategories in $\cC_2$ and $\cC^\mathrm{f}_2$ via lifting the results \cite[Theorem 4.4]{PY21}, \cite[Theorem 5.11]{F16} and \cite[Theorem 4.4]{HJ-cat} from the triangulated category $\underline{\cC}_2$ to its Frobenius cover $\cC_2$, cf.\ Theorems \ref{t: complete ct} and \ref{t: not complete ct}. Note that these classifications can also be recovered via straightforward computations in $\cC_2$, which are detailed in Appendix \ref{appendix:A}.
    
    In both $\cC_2$ and $\cC_2^{\mathrm{f}}$, cluster tilting subcategories correspond to certain triangulations. This is a consequence of the fact that a crossing of arcs corresponds to the non-vanishing of the $\Ext^1$-group between the respective indecomposable objects. In the latter category, which is $2$-Calabi-Yau, the converse is also true. Interestingly, in the former category, there are extensions between any two distinct infinite arcs. Therefore, any triangulation containing more than one infinite arc corresponds to a category which is not rigid. However, in order to fully describe the combinatorics of the $\infty$-gon, we want to study all triangulations from a categorical perspective. This leads us to consider maximal almost rigid subcategories (see Definition \ref{dfn: mar}), inspired by maximal almost rigid objects introduced by Barnard, Gunawan, Meehan and Schiffler \cite{BGMS} .
    
    \begin{theorem}[Theorem \ref{thm: cat of triang}]
            A subcategory $\cA \subset \cC_2$ is maximal almost rigid if and only if its indecomposable objects correspond to a triangulation of the completed $\infty$-gon.
    \end{theorem}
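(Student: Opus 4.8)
The plan is to reduce the statement to a purely combinatorial one about arcs, and then to invoke the description of triangulations of the completed $\infty$-gon as the maximal collections of pairwise non-crossing arcs, due to Baur--Gratz \cite{BG}.

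\textbf{Step 1: reformulation.} By the classification of indecomposable objects, every indecomposable object of $\cC_2$ is $M_\gamma$ for a unique arc $\gamma$ of the completed $\infty$-gon, and $\Ext^1_{\cC_2}(M_\gamma, M_\delta) \neq 0$ exactly when $\gamma$ and $\delta$ cross or when $\gamma \neq \delta$ are both infinite arcs. Unwinding Definition~\ref{dfn: mar}, a full additive subcategory $\cA = \add\{M_\gamma : \gamma \in \Gamma\}$ is almost rigid if and only if, for all $\gamma, \delta \in \Gamma$, every non-split conflation $0 \to M_\gamma \to E \to M_\delta \to 0$ in $\cC_2$ has indecomposable middle term $E$. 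The statement will therefore follow once I establish two local facts: \emph{(a)} if $\gamma$ and $\delta$ cross, then some non-split conflation between $M_\gamma$ and $M_\delta$ has decomposable middle term; and \emph{(b)} if $\gamma \neq \delta$ are infinite arcs, then every non-split conflation between $M_\gamma$ and $M_\delta$ has indecomposable middle term. Granting (a) and (b), ``$\cA$ is almost rigid'' becomes equivalent to ``$\Gamma$ is pairwise non-crossing'', i.e.\ $\Gamma$ is a partial triangulation of the completed $\infty$-gon.

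\textbf{Step 2: the two extension facts.} For (a), the four endpoints of a crossing pair are distinct and span a quadrilateral; the flip $\gamma \leftrightarrow \delta$ is realised by a pair of exchange conflations whose middle terms are the two ``pairs of sides'' of that quadrilateral. Since rank is additive along conflations in $\cC_2$, such a middle term is genuinely the direct sum of the two corresponding nonzero arc modules, hence decomposable; I would extract these exchange conflations from the explicit matrix-factorisation presentation of the modules $M_\gamma$ over $\mathbb{C}[x,y]/(x^2)$, or transport them across the equivalence of Theorem~\ref{T:Equivalence2}. For (b) I would compute $\Ext^1_{\cC_2}(M_\delta, M_\gamma)$ for $\gamma, \delta$ infinite directly from the module presentations and show that every non-split extension is indecomposable --- concretely, that its middle term is again an MCM module attached to a single arc and admits no nontrivial idempotents. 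I expect (b) to be the main obstacle: this is exactly the place where $\cC_2$ departs from its generically free, $2$-Calabi--Yau subcategory $\cC_2^{\mathrm{f}}$ (in which infinite arcs do not occur at all), so the computation must genuinely exploit that the infinite-arc modules are not generically free and that conflations between them do not behave like the exchange conflations of crossing arcs.

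\textbf{Step 3: conclusion.} With almost rigid subcategories identified with partial triangulations, I finish as follows. If $\Gamma$ is a partial triangulation but not a triangulation, then by \cite{BG} there is an arc $\gamma$ with $\Gamma \cup \{\gamma\}$ still non-crossing; by (a)--(b) the subcategory $\add(\cA \cup \{M_\gamma\})$ is again almost rigid (any genuinely new non-split conflation is one between $M_\gamma$ and some $M_\delta$ with $\delta \in \Gamma$, and as $\gamma$ crosses no arc of $\Gamma$ this can happen only if both arcs are infinite, in which case the middle term is indecomposable by (b)), so $\cA$ is not maximal almost rigid. Conversely, if $\Gamma$ is a triangulation then $\cA$ is almost rigid, and for every arc $\gamma \notin \Gamma$ some $\delta \in \Gamma$ crosses $\gamma$, so by (a) the enlargement $\add(\cA \cup \{M_\gamma\})$ fails to be almost rigid; hence $\cA$ is maximal almost rigid. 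Degenerate cases (a single infinite arc, or potential self-extensions) cause no trouble, since no arc crosses itself and the only extra extensions are those between \emph{distinct} infinite arcs. This establishes the equivalence.
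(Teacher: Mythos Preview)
Your proposal is correct and follows essentially the same route as the paper: reduce ``almost compatible'' to ``non-crossing'' via the two key extension facts --- decomposable middle terms for crossing arcs (the paper's Lemmas~\ref{lem:ses1} and~\ref{lem:ses2}) and indecomposable middle terms for distinct infinite arcs (Lemma~\ref{lem:ses3}) --- and then pass to maximality. The paper simply exhibits the explicit short exact sequences rather than arguing via rank additivity, but the logical structure is identical.
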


    Note that the notion of almost rigidity does not behave well under stabilisation, and relies on an exact structure. As such, it is crucial that we work in the category $\cC_2$, and not in the triangulated categories from Theorem \ref{T:Equivalence2}.
    
    Mutation in cluster categories is designed to mirror cluster algebra combinatorics. The notion of mutation in triangulated categories with respect to rigid subcategories has been introduced by Iyama and Yoshino in \cite{IYmutation}, and mutation specifically for cluster structures was studied by Buan, Iyama, Reiten and Scott in \cite{BIRS}. We extend these concepts to mutation of maximal almost rigid subcategories in $\cC_2$, see Definition \ref{mutationdefinition}. This allows for mutation with respect to non-rigid subcategories, as well as for the mutability of an indecomposable object to vary with respect to its ambient cluster.
    
    \begin{theorem}[Theorem \ref{t: mutation}]
    Let $\cA$ be a maximal almost rigid subcategory of $\cC_2$ corresponding to a triangulation $T$ of the completed $\infty$-gon. An indecomposable object $X$ of $\cA$ is mutable if and only if the corresponding arc $\gamma$ in $T$ is mutable. Furthermore, the mutation of $\cA$ at $X$ corresponds to the mutation $\mu_\gamma(T)$ of $T$ at $\gamma$.
\end{theorem}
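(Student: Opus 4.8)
The plan is to reduce everything to a local computation inside the (possibly degenerate) quadrilateral cut out by $\gamma$, after first translating both notions of mutability through the dictionary of Theorem~\ref{thm: cat of triang}. That dictionary matches indecomposable objects of $\cA$ with arcs of $T$; I will write $M_\delta$ for the indecomposable attached to an arc $\delta$, so that $X = M_\gamma$. The combinatorial input to record at the outset is the description of the flip: an arc $\gamma$ of a triangulation $T$ of the completed $\infty$-gon is mutable precisely when the two (possibly degenerate) triangles of $T$ having $\gamma$ as an edge together bound an honest quadrilateral $Q_\gamma$ with four distinct vertices, one of which may be the accumulation point $-\infty$, and then $\mu_\gamma(T)$ replaces $\gamma$ by the opposite diagonal $\gamma'$ of $Q_\gamma$ and fixes every other arc. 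The arcs that are \emph{not} mutable are exactly those for which one of the two adjacent regions degenerates — the model case being an infinite arc bounding a fountain, so that the adjacent region is an $\infty$-gon and no flip exists.

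Next I would set up the categorical mutation at $X$ following Definition~\ref{mutationdefinition}. Let $\mathcal{B} = \add(\cA)/\add(X)$ be the subcategory obtained by deleting $X$; under the dictionary it corresponds to the partial triangulation $T\setminus\{\gamma\}$. Using the explicit description of morphisms and extensions in $\cC_2$ established earlier — maps between $M_\delta$ and $M_\epsilon$ being governed by the nesting of $\delta$ and $\epsilon$, and $\Ext^1(M_\delta,M_\epsilon)\neq 0$ exactly when $\delta$ and $\epsilon$ cross (with any two distinct infinite arcs crossing) — one shows that an admissible minimal right $\mathcal{B}$-approximation $B\to X$ and an admissible minimal left $\mathcal{B}$-approximation $X\to B'$ exist if and only if $Q_\gamma$ is a genuine quadrilateral, in which case $B$ and $B'$ are the direct sums of the modules attached to the two pairs of sides of $Q_\gamma$ meeting at the two endpoints of $\gamma$, exactly as in finite type $A$ in \cite{JKS16}. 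Completing these to short exact sequences
\[
0\to X\to B\to X^{\ast}\to 0,\qquad 0\to X^{\ast}\to B'\to X\to 0
\]
in the Frobenius category $\cC_2$, I would identify $X^{\ast}$ by computing the relevant cokernel and kernel of maps of matrix factorizations and matching against the classification of indecomposables: the unique arc whose module fits both sequences is the opposite diagonal $\gamma'$, so $X^{\ast}\cong M_{\gamma'}$. Then $\add(\mathcal{B})\vee\add(M_{\gamma'})$ is the subcategory attached to $\mu_\gamma(T)$, which is maximal almost rigid by Theorem~\ref{thm: cat of triang}, giving that $X$ is mutable and that $\mu_X(\cA)$ corresponds to $\mu_\gamma(T)$.

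The hard part will be the converse: if $\gamma$ is not mutable then $X$ is not mutable, i.e.\ no indecomposable $Y\not\cong X$ makes $\add(\mathcal{B})\vee\add(Y)$ maximal almost rigid and the approximation sequences of Definition~\ref{mutationdefinition} do not close up. Since $\mathcal{B}$ pins down $T\setminus\{\gamma\}$, any such $Y$ is an arc $\delta$ compatible with all of $T\setminus\{\gamma\}$; when the region vacated by $\gamma$ is an $\infty$-gon rather than a quadrilateral, either $\gamma$ is the only arc completing $T\setminus\{\gamma\}$ to a triangulation, forcing $Y=X$, or the finitely many candidate arcs are ruled out by a failure of maximality or of \emph{almost}-rigidity. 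The subtlety here is precisely that the ambient subcategory need not be rigid — there are extensions among the infinite arcs — so the usual cluster-tilting arguments do not apply directly, and one must argue by a finite case analysis of how $\gamma$ can sit relative to the fountains and leapfrogs of $T$, using the explicit $\Ext^1$-description and the special behaviour of infinite arcs to eliminate each candidate. All genuinely new cases involve $-\infty$; in the interior of the $\infty$-gon the situation is local and identical to \cite{JKS16} and \cite{BGMS}.

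Finally, the second and third steps together give the equivalence ``$X$ mutable $\iff$ $\gamma$ mutable'', and the second step identifies the indecomposable objects of $\mu_X(\cA)$ with the arcs of $\mu_\gamma(T)$; one last appeal to Theorem~\ref{thm: cat of triang} upgrades this to an equality of subcategories. Involutivity of $\mu_X$ where defined then follows formally from the symmetry of the two exchange sequences displayed above together with involutivity of the combinatorial flip, which also serves as a consistency check on the identification $X^\ast\cong M_{\gamma'}$.
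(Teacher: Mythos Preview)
Your forward direction is essentially the paper's: once $\gamma$ sits in a genuine quadrilateral, the explicit short exact sequences of Lemmas~\ref{lem:ses1} and~\ref{lem:ses2} supply the exchange sequences, and one checks the maps are approximations via the vanishing of $\Ext^1$ against arcs of the mutated triangulation (Proposition~\ref{prop:exts}). That part is fine.

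The converse, however, has a genuine gap. You argue that if $\gamma$ is not mutable then no $Y\not\cong X$ completes $\mathcal{B}$ to a maximal almost rigid subcategory, and you treat this as equivalent to ``$X$ is not mutable''. But Definition~\ref{mutationdefinition} defines mutability of $X$ purely as the existence of both a left and a right $\add(\cA\setminus X)$-approximation of $X$; it says nothing about whether the resulting $\mu^\pm_X(\cA)$ is again maximal almost rigid, nor about replacements $Y$. So showing that $\gamma$ is the unique arc completing $T\setminus\{\gamma\}$ does not, by itself, show that approximations fail to exist. Indeed Example~\ref{Ex: why both sides} in the paper exhibits a wrapping arc for which a \emph{right} approximation does exist (and yields something that is not maximal almost rigid); the obstruction is that the \emph{left} approximation does not exist. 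Your earlier assertion that approximations ``exist if and only if $Q_\gamma$ is a genuine quadrilateral'' is exactly the statement that needs proof, and you do not supply the ``only if''.

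The paper's argument for the converse is direct and different from yours: a non-mutable $\gamma$ is a wrapping arc $(-\infty,n)$ for, say, a left fountain at $n$. One then exhibits, for each of the infinitely many fountain arcs $(m,n)\in T$, a nonzero morphism $(-\infty,n)\to(m,n)$ and shows, using the explicit Hom-descriptions of Appendix~\ref{S:Hom-calculations}, that it can only factor through arcs $(s,n)$ with $s\leq m$. Hence any left $\add(\cA\setminus X)$-approximation would need infinitely many summands, which is impossible. This is the missing idea; your ``finite case analysis to eliminate each candidate $Y$'' does not substitute for it.
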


In particular, the combinatorics of maximal almost rigid subcategories in $\cC_2$ is precisely the combinatorics of triangulations of the completed $\infty$-gon.
As a direct consequence, we obtain connectivity of the exchange graph of maximal almost rigid subcategories under transfinite mutation.

\begin{cor} [Corollary \ref{c: connected}]
    The exchange graph of maximal almost rigid subcategories of $\cC_2$ is connected.
\end{cor}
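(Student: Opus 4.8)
The plan is to transport the statement along the combinatorial model established in the previous two theorems and then invoke the known connectivity of the flip graph of the completed $\infty$-gon. By Theorem~\ref{thm: cat of triang}, sending a maximal almost rigid subcategory $\cA \subset \cC_2$ to the set of arcs labelling its indecomposable objects is a bijection between maximal almost rigid subcategories of $\cC_2$ and triangulations of the completed $\infty$-gon. By Theorem~\ref{t: mutation}, this bijection intertwines the two notions of mutation: an indecomposable $X \in \cA$ is mutable exactly when the corresponding arc $\gamma$ in the triangulation $T$ associated to $\cA$ is mutable, and in that case the subcategory $\mu_X(\cA)$ corresponds to $\mu_\gamma(T)$. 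Hence the exchange graph of maximal almost rigid subcategories of $\cC_2$ — vertices the subcategories, edges joining $\cA$ to $\mu_X(\cA)$ for each mutable $X$ — is isomorphic as a graph to the exchange graph of triangulations of the completed $\infty$-gon under flips.

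It then remains to quote the combinatorial input: Baur--Gratz \cite{BG} (see also \c{C}anak\c{c}{\i}--Felikson \cite{CF19}) prove that any two triangulations of the completed $\infty$-gon are connected by a transfinite sequence of flips, i.e.\ the exchange graph of the completed $\infty$-gon is connected under transfinite mutation. Pulling this back along the graph isomorphism above gives the Corollary.

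The one delicate point — and I expect it to be the main obstacle — is the step from a single mutation to a transfinite sequence of mutations on the categorical side, since Theorem~\ref{t: mutation} is stated for one mutation step while flip-connectedness of the completed $\infty$-gon genuinely requires transfinite mutation. One has to check that the limit operation on subcategories used to define transfinite mutation (taking the additive subcategory generated by the indecomposables that are eventually stable along the sequence, matching the combinatorial limit of a transfinite flip sequence in \cite{BG}) is well-defined, outputs a maximal almost rigid subcategory, and corresponds under Theorem~\ref{thm: cat of triang} to the limit triangulation. Granting this compatibility at limit ordinals, the correspondence between subcategory mutations and arc flips propagates through the entire transfinite sequence by transfinite induction, and connectivity transfers verbatim from \cite{BG}; equivalently, one may simply \emph{define} transfinite mutation of maximal almost rigid subcategories via this correspondence, in which case the Corollary becomes an immediate restatement of the Baur--Gratz result.
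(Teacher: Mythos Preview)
Your proposal is correct and matches the paper's approach exactly: the paper states this as an immediate corollary after explaining that, by virtue of Theorem~\ref{t: mutation}, transfinite mutation of maximal almost rigid subcategories is \emph{defined} via the correspondence with triangulations, so the result is a direct restatement of \cite[Theorem~6.9]{BG}. In particular, the paper takes precisely the shortcut you describe in your final sentence, and the exchange graph is defined there with edges given by transfinite mutations (not single flips), which sidesteps the limit-ordinal compatibility check you flag.
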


\subsubsection*{Acknowledgements} The authors would like to thank the Isaac Newton Institute for Mathematical Sciences, Cambridge, for support (through EPSRC grant EP/R014604/1) and hospitality during the programme Cluster algebras and representation theory  where work on this paper was undertaken. This project started from the WINART2 (Women in Noncommutative Algebra and Representation Theory) workshop, and the authors would like to thank the organisers for this wonderful opportunity. They also thank the London Mathematical Society (WS-1718-03), the University of Leeds, the National Science Foundation (DMS 1900575), the Association for Women in Mathematics (DMS-1500481), and the Alfred P. Sloan foundation for supporting the workshop. S.G. acknowledges support from Villum Fonden and from the EPSRC (Grant Number EP/V038672/1). S.S. acknowledges support from  EPSRC (Grant Number EP/P016294/1). J.A. acknowledges support from the Max Planck Institute of Mathematics and a DNRF Chair from the Danish National Research Foundation (Grant Number DNRF156).
    
    \section{The $A_\infty$ curve singularity} \label{S:Grassmannian category} 
    
    Let  $R=\mathbb{C}[x,y]/(x^2)$. This is a nonreduced hypersurface ring, and $\Spec(R)$ is a plane curve singularity of type $A_\infty$. This terminology should indicate that $\Spec(R)$ is a ``limit'' of singularities of type $A_n$, that are defined as $\Spec(\C[x,y]/(x^2+y^{n+1}))$. The singular locus of $\Spec(R)$ is one-dimensional and this type of singularity has been studied by Siersma in the 1980s, see \cite{Siersma} for more info.
    
    Following \cite{ourfirstpaper}, we will consider $R$ as a graded ring with 
    $x$ in degree $1$ and $y$ in degree $-1$, and define
    $\mathcal{C}_{2} \colonequals \mcm^{\bZ} R$, which is the category of finitely generated $\bZ$-graded MCM $R$-modules. This is a natural generalisation of the Grassmannian cluster categories studied by Jensen--King--Su in the finite setting. This category is Krull--Schmidt and Frobenius, where the projective-injectives are given by all graded shifts of $\add(R)$, and thus the stable category $\underline{\cC}_2$ is a triangulated category with shift functor given by the inverse syzygy $\Omega^{-1}$.

\subsection{Classification of objects} From an algebraic point of view, $R$ is of \emph{countable $\CM$-type}, which means that it has only countably many isomorphism classes of indecomposable MCM-modules, see e.g.~the book by Leuschke and Wiegand \cite[Chapter 14]{LeuschkeWiegand} for more details. In particular, by a theorem of Buchweitz, Greuel and Schreyer \cite{BGS}, any hypersurface ring of countable $\CM$-type over $\C$ is either of type $A_\infty$ or $D_\infty$, see \cite[Theorem 14.16]{LeuschkeWiegand}. Surfaces of countable $\CM$-type have been studied by Burban and Drozd in \cite{BurbanDrozd}.

Throughout the paper, given a graded module $M$, we will use $M(j)$ to denote the graded module with $M(j)_n=M_{j+n}$. 

\begin{proposition} \label{prop: classify objects}
Let $M$ be an indecomposable graded $\CM$-module over $R$. Then $M$ is determined by a graded shift of one of the following matrix factorizations of $x^2$:
\begin{enumerate}
\item For $A=x^2$ and $B=0$ we get the graded matrix factorization of rank 1
$$ \mathbb{C}[x,y](-2) \xrightarrow{1} \mathbb{C}[x,y](-2) \xrightarrow{x^2} \mathbb{C}[x,y] \ , $$
which gives $M=\Coker(A)=R$.
\item For $A=B=x$, we get again a graded matrix factorization of rank 1
$$ \mathbb{C}[x,y](-2) \xrightarrow{x} \mathbb{C}[x,y](-1) \xrightarrow{x} \mathbb{C}[x,y] \ , $$
which gives $M=\Coker(A)=R/(x)\cong \C[y]$.
\item For each $k \in \Z_{> 0}$ and $A=B=\begin{pmatrix} x & y^k \\ 0 & -x \end{pmatrix}$ we get a graded matrix factorization of rank 2 
$$ \mathbb{C}[x,y](-2)\oplus \mathbb{C}[x,y](k-1) \xrightarrow{\phantom{aa}B\phantom{aa}} \mathbb{C}[x,y](-1) \oplus \mathbb{C}[x,y](k) \xrightarrow{\phantom{aa}A\phantom{aa}} \mathbb{C}[x,y] \oplus \mathbb{C}[x,y](k+1) \ , $$
giving $M=\Coker(A) \cong (x,y^k)$.
\end{enumerate}
\end{proposition}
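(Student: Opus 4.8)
The plan is to classify the indecomposable graded $\CM$ modules over $R$ directly, and only at the end to recognise the displayed matrices as their minimal (two-periodic) free resolutions. The main tool is the graded Noether normalization $\mathbb{C}[y]\subseteq R$: the ring $R$ is graded free of rank $2$ over $\mathbb{C}[y]$ with basis $\{1,x\}$, and $\mathbb{C}[y]$ is a graded principal ideal domain of Krull dimension $1=\dim R$. Hence (a standard fact for module-finite extensions of a regular ring of the same dimension) a finitely generated graded $R$-module $M$ is $\CM$ if and only if it is graded free when regarded as a $\mathbb{C}[y]$-module, and endowing such an $M$ with its $R$-structure is the same as prescribing the action of $x$, that is, a $\mathbb{C}[y]$-linear endomorphism $\varphi$ of $V:=M|_{\mathbb{C}[y]}$ which is homogeneous of degree $1$ and satisfies $\varphi^2=0$. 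So it suffices to classify the indecomposable such pairs $(V,\varphi)$.

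I would then split off the kernel of $\varphi$. Since $\varphi^2=0$ we have $\operatorname{im}\varphi\subseteq\ker\varphi$, and $V/\ker\varphi$ is finitely generated and torsion-free over $\mathbb{C}[y]$ (if $f\bar v=0$ with $0\neq f\in\mathbb{C}[y]$ then $f\varphi(v)=0$, so $\varphi(v)=0$ as $V$ is torsion-free, so $\bar v=0$), hence graded free; therefore $0\to\ker\varphi\to V\to V/\ker\varphi\to 0$ splits, say $V=\ker\varphi\oplus C$, and $\varphi$ restricts to an injective degree-$1$ homomorphism $C\hookrightarrow\ker\varphi$ of graded free $\mathbb{C}[y]$-modules. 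Now apply the graded form of the elementary-divisor theorem over the graded PID $\mathbb{C}[y]$: after graded changes of basis on $C$ and on $\ker\varphi$, the pair $(V,\varphi)$ becomes a direct sum of copies of $(\mathbb{C}[y](a),0)$ and of rank-$2$ blocks $\bigl(\mathbb{C}[y](e)\oplus\mathbb{C}[y](c),\varphi\bigr)$ in which $\varphi$ sends the generator of $\mathbb{C}[y](c)$ to $y^{d}$ times the generator of $\mathbb{C}[y](e)$ and kills $\mathbb{C}[y](e)$, where $d=c-e-1\geq 0$ is forced by homogeneity. A one-line check — a degree-$1$ endomorphism of a rank-$1$ graded free $\mathbb{C}[y]$-module vanishes, as $\mathbb{C}[y]$ is concentrated in non-positive degrees — shows every such summand is indecomposable, and hence that this is the complete list.

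It remains to match the list with (1)--(3), in each case up to a graded shift. The summands $(\mathbb{C}[y](a),0)$ are precisely the shifts of $R/(x)\cong\mathbb{C}[y]$, which is (2). Among the rank-$2$ blocks, those with $d=0$ are the shifts of $R$ itself, since $R\cong\mathbb{C}[y](0)\oplus\mathbb{C}[y](-1)$ as a graded $\mathbb{C}[y]$-module with multiplication by $x$ realising the $d=0$ block; this is (1). Those with $d=k\geq 1$ are the shifts of the ideal $(x,y^k)$, since $(x,y^k)\cong\mathbb{C}[y](k)\oplus\mathbb{C}[y](-1)$ with multiplication by $x$ acting as multiplication by $y^k$ on the first summand; this is (3). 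Finally, for each of the three modules one writes down its minimal graded free $R$-resolution, which (by Eisenbud's theorem for hypersurface rings, or by direct computation) is two-periodic, and reads off the matrices of one period; these are exactly the matrix factorizations of $x^2$ displayed in (1), (2), (3). I expect that the only point needing genuine care is the bookkeeping of graded shifts — in the elementary-divisor reduction and in the identifications of $R$ and of $(x,y^k)$ as graded $\mathbb{C}[y]$-modules — rather than anything conceptual; the underlying ungraded classification is, of course, that of Buchweitz--Greuel--Schreyer \cite{BGS}, which one could alternatively quote and then refine by lifting the grading.
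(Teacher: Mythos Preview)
Your proof is correct and takes a genuinely different route from the paper. The paper's argument is essentially a citation: it invokes \cite[Prop.~4.1]{BGS} for the classification in the local case, notes that each matrix factorization listed there is gradable, and then computes the degrees and the identification $\Coker(A)\cong(x,y^k)$ directly. You instead give a self-contained argument via the Noether normalization $\mathbb{C}[y]\subseteq R$, reducing the problem to classifying pairs $(V,\varphi)$ with $V$ graded free over $\mathbb{C}[y]$ and $\varphi$ a square-zero endomorphism of degree $1$, and then applying the graded elementary-divisor theorem. What your approach buys is that it is elementary, needs no external input beyond linear algebra over a graded PID, and --- more interestingly --- it anticipates the equivalence $\mathrm{gr}\,R \cong \mathrm{dg}\,S$ of Section~\ref{S:dg algebra}: your pair $(V,\varphi)$ is precisely a dg $S$-module with differential $\varphi$, so you are in effect classifying the indecomposable perfect dg $S$-modules and transporting back. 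The paper's approach is shorter and situates the result within the matrix-factorization literature on countable CM type. Your closing remark, that one could alternatively quote \cite{BGS} and then lift the grading, is exactly what the paper does.
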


\begin{proof}
The matrix factorizations were determined in \cite[Prop.~4.1]{BGS} in the local case, that is, instead of $\C[x,y]$ one considers $S$ a noetherian regular local ring. All these matrix factorizations are gradable, so they are also a complete set of reduced matrix factorizations in our case. The degrees in all cases can be calculated from the matrix presentations, using that $\deg (x^2)=2$. Moreover, in (3), the isomorphism of $\Coker(A)$ with the ideal $(x,y^k)$ can be seen by a direct calculation using that $\Coker(A) \cong \ker(A)$. 
\end{proof}
\begin{remark} \label{R : suspension=gradingshift}
Note that the modules in (2) and (3) satisfy $M(-1) \cong \Omega(M)$. In particular this shows that the grading shift and suspension functor in $\underline{\cC}_2$ will coincide.  
\end{remark}
Since we may think of the graded module $R(j)$ as the ideal $(x,y^0)(j)$, we will consider the objects of $\cC_2$ as lying in two families: those of the form $\C[y](j)$ and those of the form $(x,y^k)(j)$.

\subsection{The subcategory $\cC_2^\mathrm{f}$}

Continuing to follow \cite{ourfirstpaper}, we will be interested in a particular subcategory of $\cC_2$ for which the indecomposable objects are in bijection with the Pl\"ucker coordinates of the corresponding Grassmannian cluster algebra. To define this subcategory we need the following. 

Let $\cF$ be the graded total ring of fractions of $R$, i.e.\ 
 the ring $R$ localised at all homogeneous non-zero divisors:
 \[
 	\cF = R_y = \bC[x,y^{\pm}]/(x^2).
 \]

We consider $\cF$ as a graded ring, with the grading induced by the grading of $R$. 

\begin{dfn} \label{D: generically free}
    A module $M \in \mathrm{gr} R$ is {\em generically free of rank $n$} if $M\otimes_R \cF$ is a graded free $\cF$-module of rank $n$.
\end{dfn}

We call the subcategory of $\cC_2$ consisting of the generically free modules $\cC_2^\mathrm{f}$, and note that this is an extension closed subcategory which is stably 2-Calabi-Yau by \cite[Proposition 3.12]{ourfirstpaper}. Moreover, we know by \cite[Theorem 3.7]{ourfirstpaper} that the generically free modules in $\cC_2$ are precisely the shifted ideals $(x,y^k)(j)$ for $k \geq 0$, $j \in \mathbb{Z}$, and these correspond to Pl\"ucker coordinates in the homogeneous coordinate ring of an infinite version of the Grassmannian of planes.

In the following sections, we will see how both $\cC_2$ and $\cC_2^\mathrm{f}$ can be considered as cluster categories of type $A_\infty$.

\section{Equivalences of Categories}
    
 In this section, we observe the close connection between $\mathcal{C}_{2} $ and other versions of infinite type $A$ cluster categories. The key strategy will be to link all the categories we consider to a particular differential graded (dg) algebra.

\subsection{The dg algebra $\mathbb{C}[y]$}\label{S:dg algebra}

Consider the dg algebra $S = \mathbb{C}[y]$ with zero differential, and with $y$ in cohomological degree $-1$. We will show that all the categories we are studying are (stably) equivalent to (a subcategory of) $\mathrm{Perf}(S)$.

Since $x$ squares to $0$ in $R=\C[x,y]/(x^2)$, we can view every graded $R$-module $M$ as a dg module over $S$, with differential given by the action of $x$, and vice versa. Note that the degrees agree: The action of $x$ (respectively the differential) increases the degree by $1$, while the action by $y$ decreases the degree by $1$. This yields an equivalence of categories
\[
	\mathrm{gr} R \cong \mathrm{dg} S,
\] 
between finitely generated $\bZ$-graded $R$-modules and finitely generated dg-modules over the dg algebra $S$. A generically free indecomposable graded MCM  $R$-module isomorphic to $(x, y^i) (j)$ for $i \in \bZ_{\geq 0}, j \in \bZ$ corresponds to the isomorphism class of the dg $S$-module 
\[
	\ldots \xrightarrow{x} \langle xy^{i+2},y^{i+1} \rangle  \xrightarrow{x} \langle xy^{i+1},y^i \rangle  \xrightarrow{x} \langle xy^i \rangle \xrightarrow{x}  \ldots \xrightarrow{x} \ldots \xrightarrow{x} \langle xy^2 \rangle \xrightarrow{x} \langle xy \rangle \xrightarrow{x} \langle x \rangle \xrightarrow{x} 0 \to \ldots
\]
with cohomology concentrated  in degrees $1-j-i$ to $1-j$. Here, the angled brackets denote the linear span over $\C$. Any indecomposable graded MCM $R$-module that is not generically free is isomorphic to $\mathbb{C}[y](j)$ for some $j \in \bZ$ and corresponds to the isoclass of the dg $S$-module
\[
		\ldots \xrightarrow{0} \langle y^2 \rangle \xrightarrow{0} \langle y \rangle \xrightarrow{0} \langle 1 \rangle \to 0 \to \ldots
\]
with $\langle 1 \rangle$ in degree $-j$ (and where $x$, respectively the differential, acts trivially). The projective graded $R$-modules are $R(j)$ for $j \in \bZ$. This corresponds to the acyclic dg $S$-module
\[
	\ldots \xrightarrow{x} \langle xy^3, y^2 \rangle \xrightarrow{x} \langle xy^2, y \rangle \xrightarrow{x} \langle 1, xy \rangle \xrightarrow{x} \langle x \rangle \xrightarrow{x} 0 \to \ldots
\]
with $\langle x \rangle$ in degree $-j+1$. 

Recall that a graded algebra $R$ is called {\em intrinsically formal} if whenever $A$ is a dg algebra with $H^*(A) \cong R$, then $R$ is quasi-isomorphic to $A$ as a dg algebra.

\begin{lemma}\label{L:intrinsically formal}
	The algebra $S = \bC[y]$ with $y$ in degree $-1$ is intrinsically formal.
\end{lemma}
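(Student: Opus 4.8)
The standard way to prove intrinsic formality of a graded algebra $R$ is via Hochschild cohomology: if the Hochschild cohomology groups $HH^n(R)$ vanish in the relevant internal degrees — specifically, if $HH^n(R)_{2-n} = 0$ for all $n \geq 3$ (where the subscript denotes the internal grading coming from the grading on $R$) — then the obstruction theory for deforming the $A_\infty$-structure collapses, and any dg algebra $A$ with $H^*(A) \cong R$ is formal. I would invoke this criterion (it goes back to Kadeishvili, and is spelled out for graded algebras in work of e.g.\ Keller, or Benson--Iyengar--Krause); the point is that the $n$-th potential obstruction class to rigidifying a minimal $A_\infty$-model lives in $HH^{n}(R)$ with internal degree $2-n$.

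**Computing $HH^*(S)$.** So the task reduces to computing the Hochschild cohomology of $S = \bC[y]$ as a graded algebra with $y$ in degree $-1$. Since $S$ is a polynomial ring in one variable, it is smooth, and its Hochschild cohomology is computed by the Koszul (Hochschild--Kostant--Rosenberg) resolution: $HH^0(S) = S$, $HH^1(S) = \Der_{\bC}(S) \cong S \cdot \partial_y$, and $HH^n(S) = 0$ for all $n \geq 2$. This already kills every obstruction group with $n \geq 3$ on the nose, regardless of internal degree, so the vanishing hypothesis is satisfied trivially.

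**Tracking the grading.** The one subtlety worth a sentence is to make sure the grading conventions are consistent: $y$ has degree $-1$, so $\partial_y$ raises degree by $1$, i.e.\ $HH^1(S)$ is concentrated in internal degree $+1$; in particular there is nothing in the critical degree $2 - 1 = 1$ that could even be an issue at the $n=2$ stage, and in any case $n=2$ governs associativity of the product, which we are handed for free. Thus all higher products $m_n$ ($n \geq 3$) of any minimal model can be gauged away, and $A \simeq S$ as dg algebras.

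**Main obstacle.** Honestly there is no serious obstacle here — the computation $HH^{\geq 2}(\bC[y]) = 0$ is immediate from smoothness. The only thing requiring care is citing the obstruction-theoretic formality criterion in precisely the graded form needed (the relevant statement is that $HH^n(R)_{2-n} = 0$ for $n \geq 3$ implies intrinsic formality), and confirming that our homological/internal degree conventions line up with that statement; I would state the criterion explicitly and then observe that the hypothesis holds vacuously in our case.
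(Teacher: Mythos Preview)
Your proposal is correct and takes essentially the same approach as the paper: both observe that $S$ has projective dimension $1$ over its enveloping algebra $S^e$ (equivalently, $HH^{\geq 2}(S) = 0$) --- you via smoothness/HKR, the paper via the explicit two-term Koszul resolution of $S$ over $S^e \cong \bC[u,v]$ --- and then invoke Kadeishvili's criterion for intrinsic formality. (One inconsequential slip: $HH^1(S) = S \cdot \partial_y$ is not concentrated in internal degree $+1$, since $y^k\partial_y$ has internal degree $1-k$ for each $k \geq 0$; but as you correctly note, only $n \geq 3$ matters for the obstruction theory and those groups vanish entirely.)
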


\begin{proof}
	Consider $S$ as a module over the enveloping algebra $S^e \cong \bC[y] \otimes \bC[y] \cong \bC[u,v]$.
	It has a projective resolution
	\[
		\xymatrix{0 \to S^e \ar[rr]^{1 \mapsto u-v} && S^e \ar[r]^{u,v \mapsto y} & S,}
	\]
	and thus the projective dimension of $S$ over $S^e$ is $1$. It follows by \cite{Kadeishvili}, see also \cite[Proposition 4.13]{hanihara2021morita}
	that $S$ is intrinsically formal.
\end{proof}

We will repeatedly use the following strategy to show the desired equivalences of categories.

\begin{proposition}\label{P:generator}
	Let $\mathcal{C}$ be an algebraic triangulated category with a generator $M$ with graded endomorphism ring $\mathrm{Ext}^*(M,M)$ isomorphic to $\mathbb{C}[y]$ as a graded algebra with $y$ in degree $-1$.
	Then we have an equivalence of categories
	\[
		\mathcal{C} \cong \mathrm{Perf}(S).
	\]
\end{proposition}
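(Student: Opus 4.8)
The plan is to invoke a Morita-type recognition theorem for algebraic triangulated categories, using the intrinsic formality of $S = \mathbb{C}[y]$ (Lemma \ref{L:intrinsically formal}) to upgrade the hypothesis on the graded endomorphism ring of the generator to an honest dg-algebra quasi-isomorphism. First I would recall that since $\mathcal{C}$ is algebraic, it can be realised (up to triangle equivalence) as the derived category of a small dg category, or more conveniently, we may pass to a dg enhancement and form the dg endomorphism algebra $A := \mathrm{REnd}_{\mathcal{C}}(M)$ of a bifibrant model of the generator $M$. By construction $H^*(A) \cong \mathrm{Ext}^*_{\mathcal{C}}(M,M) \cong \mathbb{C}[y]$ as graded algebras, with $y$ in degree $-1$. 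By Lemma \ref{L:intrinsically formal}, $S = \mathbb{C}[y]$ is intrinsically formal, so $A$ is quasi-isomorphic to $S$ as a dg algebra.

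Next I would apply the standard dg-Morita theory (Keller's theorem on deriving dg categories, \cite{hanihara2021morita}): if $M$ is a compact generator of an algebraic triangulated category $\mathcal{C}$ with dg enhancement, then $\mathcal{C} \cong \mathrm{Perf}(\mathrm{REnd}(M))$, the perfect derived category of the dg endomorphism algebra. Combining this with the quasi-isomorphism $\mathrm{REnd}(M) \simeq S$ from the previous paragraph, and using that quasi-isomorphic dg algebras have equivalent perfect derived categories, we obtain
\[
    \mathcal{C} \cong \mathrm{Perf}(\mathrm{REnd}(M)) \cong \mathrm{Perf}(S),
\]
as desired. One point to be careful about: the word ``generator'' in the statement should be read as ``compact generator'' (equivalently, a classical generator when $\mathcal{C}$ consists of compact objects), since this is what makes the Morita statement go through; in all the applications in this paper the relevant $M$ will indeed be compact and will generate $\mathcal{C}$ as a thick subcategory.

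The main obstacle is bookkeeping rather than anything deep: one must be slightly careful that ``algebraic triangulated category'' is being used in the sense that guarantees a dg (or equivalently stable $\infty$-, or exact-category) enhancement through which $\mathrm{REnd}(M)$ is well-defined, and that the grading conventions on $S$ (cohomological degree $-1$ for $y$) match the Ext-grading on $M$ throughout — a sign/degree-direction slip here would be easy to make. Once the enhancement is fixed, the argument is a formal two-step: identify the dg endomorphism algebra up to quasi-isomorphism via intrinsic formality, then quote dg-Morita equivalence. No nontrivial computation is needed beyond what is already established in Lemma \ref{L:intrinsically formal}.
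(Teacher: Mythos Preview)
Your proposal is correct and follows essentially the same argument as the paper: use intrinsic formality of $S$ (Lemma~\ref{L:intrinsically formal}) to identify $\mathrm{RHom}(M,M)$ with $S$ up to quasi-isomorphism, then invoke Keller's Morita theory for dg categories to conclude $\mathcal{C}\cong\mathrm{Perf}(\mathrm{RHom}(M,M))\cong\mathrm{Perf}(S)$. The only cosmetic difference is that the paper cites \cite{K94} directly for the last step rather than \cite{hanihara2021morita}.
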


\begin{proof}
	By assumption, the graded endomorphism ring $\Ext^*(M,M)$ of $M$ is isomorphic as a graded algebra to the polynomial algebra $\mathbb{C}[y]$ with $y$ in degree $-1$. Now, by Lemma \ref{L:intrinsically formal}, the algebra $S$ is intrinsically formal, so that
	\[
		S \cong \Ext^*(M,M) \cong H^*(\mathrm{RHom}(M,M))
	\]
	implies that $\mathrm{RHom}(M,M)$ is quasi-isomorphic to $S$ and thus
	\[
		 \mathrm{Perf}(S) \cong \mathrm{Perf}(\mathrm{RHom}(M,M)) \cong \mathcal{C},
	\]
	where the last equivalence follows from \cite{K94}. 
\end{proof}

\subsection{$\mathcal{C}_{2}$ and the Holm--J\o rgensen Cluster Category of infinite type $A$}\label{S:HJ}

 In \cite{HJ-cat}, Holm and J\o rgensen describe the cluster structure of the finite  derived category $\mathrm{D^f}(S)$ of dg-modules over $S$. We first observe the following equivalence.
 
 \begin{proposition} \label{P: our category}
 	We have an equivalence of categories 
 	\[
	\underline{\cC}_2 \cong \mathrm{Perf}(S),
	\]
	where $\underline{\cC}_2$ denotes the stable category of $\cC_2$.
 \end{proposition}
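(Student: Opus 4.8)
The plan is to apply Proposition~\ref{P:generator} to the algebraic triangulated category $\mathcal{C} = \underline{\cC}_2$, so the whole task reduces to exhibiting a generator $M$ of $\underline{\cC}_2$ whose graded self-extension algebra $\Ext^*_{\underline{\cC}_2}(M,M)$ is isomorphic, as a graded algebra, to $S = \bC[y]$ with $y$ in cohomological degree $-1$. The natural candidate is the non-free module $M = \bC[y]$ (the module from case (2) of Proposition~\ref{prop: classify objects}, i.e.\ $R/(x)$), viewed as an object of $\underline{\cC}_2$.

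First I would check that $M = \bC[y]$ generates $\underline{\cC}_2$ as a triangulated category. Using the equivalence $\mathrm{gr}\,R \cong \mathrm{dg}\,S$ recorded in Section~\ref{S:dg algebra}, the stable category $\underline{\cC}_2$ is the quotient of $\mathrm{dg}\,S$ by the acyclic modules (the shifts of $R$), which is the perfect derived category of the dg algebra $S$; there $\bC[y]$ corresponds (up to shift) to the residue field $\bC$, which is a classical generator of $\mathrm{Perf}(S)$. But since the cleanest route is to stay inside $\cC_2$, I would instead argue directly: every indecomposable of $\cC_2$ is, by Proposition~\ref{prop: classify objects}, a shift of $R$, of $\bC[y]$, or of an ideal $(x,y^k)$, and the short exact sequences $0 \to (x,y^k) \to R \oplus (\text{shift}) \to \bC[y](\text{shift}) \to 0$ realising the ideals as extensions/syzygies of shifts of $\bC[y]$ (these sequences come from the matrix factorisations in case (3)) show that, after killing the projectives, all indecomposables lie in the thick subcategory generated by the shifts of $\bC[y]$; combined with Remark~\ref{R : suspension=gradingshift}, which identifies the grading shift with the suspension $\Omega^{-1}$, the single object $M = \bC[y]$ generates.

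Second, and this is where the real computation lies, I would compute the graded endomorphism ring $\Ext^*_{\underline{\cC}_2}(M,M) = \bigoplus_{n \in \bZ} \underline{\Hom}_{\cC_2}(M, \Omega^{-n} M)$. By Remark~\ref{R : suspension=gradingshift} we have $\Omega^{-n} M \cong M(n)$, so this is $\bigoplus_n \underline{\Hom}_{\mathrm{gr}R}(\bC[y], \bC[y](n))$, the stable graded-Hom. A graded $R$-module map $\bC[y] \to \bC[y](n)$ is multiplication by an element of $\bC[y]$ of degree $n$; since $\bC[y]$ sits in non-positive internal degrees with a generator in degree $0$, such a map exists only for $n \le 0$, where it is multiplication by (a scalar times) $y^{-n}$. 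One then checks that none of these maps factor through a projective (graded free) module — a map $\bC[y] \to R(j)$ must hit $(x) \subset R(j)$ in the appropriate degrees, but $\bC[y]$ is generated in a single degree while the relevant graded pieces of $(x)$ force the image to be zero — so the stable Hom equals the ordinary Hom in each degree. Hence $\Ext^*_{\underline{\cC}_2}(M,M) \cong \bigoplus_{n \le 0}\bC\cdot y^{-n}$ with the evident multiplication, i.e.\ $\bC[y]$ with $y$ in degree $-1$. Applying Proposition~\ref{P:generator} then yields $\underline{\cC}_2 \cong \mathrm{Perf}(S)$.

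The main obstacle I anticipate is the identification of the \emph{ring} structure on $\Ext^*(M,M)$, not merely its graded vector-space dimensions: one must verify that composition of these ``multiplication by $y^k$'' maps is the obvious one and, crucially, that there are no corrections coming from the stable category (e.g.\ that the product of two nonzero extension classes is nonzero, so the algebra is genuinely $\bC[y]$ and not a truncated polynomial ring). This is handled by the explicit description of the maps as honest module homomorphisms together with the vanishing of stable corrections established above, but it is the step that needs care. A secondary point to be careful about is ensuring $M$ is a generator in the strong sense required by \cite{K94} (i.e.\ that $\underline{\cC}_2$ is the thick, idempotent-complete subcategory it generates, and that $\underline{\cC}_2$ is compactly generated/algebraic as Proposition~\ref{P:generator} demands) — this follows from $\cC_2$ being Krull--Schmidt and Frobenius, so $\underline{\cC}_2$ is an algebraic, idempotent-complete triangulated category, and from the generation statement in the second paragraph.
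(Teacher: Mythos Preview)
Your strategy is exactly the paper's: take $M = \bC[y]$, show it generates $\underline{\cC}_2$, compute $\Ext^*(M,M) \cong \bC[y]$, and apply Proposition~\ref{P:generator}. Two details need correcting. First, the short exact sequence you wrote for generation cannot be right: with $R \oplus (\text{shift of }R)$ in the middle, the induced stable triangle would force $(x,y^k)$ to be a suspension of $\bC[y]$, which is false for $k \geq 1$. The paper instead uses
\[
0 \to \bC[y](-1) \to (x,y^k) \to \bC[y](k-1) \to 0,
\]
placing the ideal in the \emph{middle}, which immediately exhibits it as an extension of two shifts of $M$. Second, your factoring-through-projectives argument is loose: there \emph{are} nonzero graded maps $\bC[y] \to R(j)$, namely $1 \mapsto \lambda xy^{1-j}$ for $j \leq 1$; the point is rather that any such map has image in $(x) \subset R(j)$, and any map $R(j) \to \bC[y](n)$ annihilates $(x)$, so the composite is zero. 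The paper sidesteps this entirely by computing $\Ext^*(M,M)$ from the complete projective resolution $\cdots \xrightarrow{x} R(-1) \xrightarrow{x} R \xrightarrow{x} R(1) \xrightarrow{x} \cdots$ and applying $\Hom(-,M)$, which reads off the answer and the ring structure in one step.
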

 
 \begin{proof}
 	Note that $M = \bC[y]$ generates $\underline{\cC}_2$: For all $i \in \bZ$, the module $M(i) = \Sigma^i M$ clearly is in its thick closure. Moreover, for each $k \geq 0$ we have a short exact sequence
 	\[
 	0 \to \bC[y](-1) \to (x,y^k) \to \bC[y](k-1) \to 0
 	\]
 	in $\cC_2$, and so $ (x,y^k)(j) = \Sigma^j(x,y^k)$ is also in the thick closure of $M$. We calculate its graded endomorphism ring. 
 	The generator $M$ has a complete projective resolution
 	\[
 		\xymatrix{\ldots \ar[r] & R(-2)  \ar[r]^x & R(-1)  \ar[r]^x & R \ar[rd]  \ar[rr]^x & & R(1)  \ar[r]^x & R(2)  \ar[r]^x & \ldots \\
 		& & & & M \ar[ru] & & }
 	\]
 	Applying $\Hom(-,M)$ yields the sequence
 	\[
 		\xymatrix{\ldots \ar[r] & M_{-2}  \ar[r] & M_{-1}  \ar[r] & M_0 \ar[r]^0 & 0  \ar[r]^0 & \ldots,}
 	\]
 	where $M_i$ denotes the degree $i$ component of $M$, which vanishes for $i \geq 1$. We obtain that for $i \geq 0$,
 	\[
 		\Ext^{-i}(M,M) \cong \underline{\Hom}(M,\Omega^i M) = \underline{\Hom}(M,M(-i)) \cong \bC,
 	\]
 	and $f \in \Ext^{-i}(M,M)$ is given by multiplication with $\lambda y^i$ for some scalar $\lambda \in \bC$. We obtain that
 	\[
 		\Ext^*(M,M) \cong \mathbb{C}[y],
 	\]
 with $y$ in degree $-1$, and the statement follows from Proposition \ref{P:generator}.
 \end{proof}

Recall that $\cC_2^\mathrm{f}$ is the full subcategory of $\cC_2$ consisting of generically free modules. The indecomposable objects in $\cC_2^\mathrm{f}$ are the modules $(x,y^k)(j)$ which precisely correspond to the dg $S$-modules with finite dimensional cohomology over $\bC$. The following therefore follows immediately from Proposition \ref{P: our category}.
\begin{cor} \label{C: link to HJ}
There is an equivalence of categories
\[
	\underline{\cC}_2^\mathrm{f} \cong \mathrm{D^f}(S),
\]
where the category $\mathrm{D^f}(S)$ denotes the derived category of dg-modules over $S$ with finite dimensional cohomology over $\bC$. 
\end{cor}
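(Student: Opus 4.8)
The plan is to deduce Corollary \ref{C: link to HJ} from Proposition \ref{P: our category} by restricting the equivalence $\underline{\cC}_2 \cong \mathrm{Perf}(S)$ to the appropriate subcategories on each side. First I would recall that under the equivalence $\mathrm{gr} R \cong \mathrm{dg} S$ described at the start of Section \ref{S:dg algebra}, the generically free indecomposable modules $(x,y^k)(j)$ correspond exactly to the dg $S$-modules whose cohomology is finite dimensional over $\bC$ (this is visible from the explicit description of these dg modules, whose cohomology is concentrated in the finitely many degrees $1-j-k$ through $1-j$), whereas the non-generically-free modules $\bC[y](j)$ correspond to dg modules with infinite dimensional cohomology. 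Hence, under the equivalence of Proposition \ref{P: our category}, the subcategory $\underline{\cC}_2^\mathrm{f}$ of generically free modules is sent to the full subcategory of $\mathrm{Perf}(S)$ of objects with finite dimensional total cohomology.

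The next step is to identify this subcategory of $\mathrm{Perf}(S)$ with $\mathrm{D^f}(S)$. Since $S = \bC[y]$ with $y$ in cohomological degree $-1$ is a (formal) dg algebra which is smooth and proper — indeed it has finite global dimension and finite dimensional total cohomology is automatically bounded — the perfect derived category $\mathrm{Perf}(S)$ coincides with the bounded derived category $\mathrm{D}^b(S)$, and the objects of $\mathrm{Perf}(S)$ with finite dimensional cohomology over $\bC$ are by definition the objects of $\mathrm{D^f}(S)$. So I would argue that the restriction of the equivalence of Proposition \ref{P: our category} to $\underline{\cC}_2^\mathrm{f}$ lands in, and is essentially surjective onto, $\mathrm{D^f}(S)$: essential surjectivity follows because every object of $\mathrm{D^f}(S)$ is built from shifts of the simple $S$-module $\bC$ (with zero $y$-action), and $\bC$ is the image of $\bC[y](0)/\text{(something)}$ — more cleanly, $\bC$ arises as a cone of multiplication by $y$ on $S$, hence lies in the thick closure of the image, and in fact the short exact sequence $0 \to \bC[y](-1) \to (x,y^k) \to \bC[y](k-1) \to 0$ used in the proof of Proposition \ref{P: our category} identifies the relevant generators.

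Concretely, I would phrase the argument as: the equivalence $F\colon \underline{\cC}_2 \xrightarrow{\sim} \mathrm{Perf}(S)$ of Proposition \ref{P: our category} is, by construction, compatible with the equivalence $\mathrm{gr} R \cong \mathrm{dg} S$, so it sends an indecomposable $X \in \underline{\cC}_2$ to a dg $S$-module with finite dimensional cohomology if and only if $X$ is generically free, i.e.\ if and only if $X \in \underline{\cC}_2^\mathrm{f}$. Since $\underline{\cC}_2^\mathrm{f}$ is a full subcategory and $\mathrm{D^f}(S) \subseteq \mathrm{Perf}(S)$ is the full subcategory on objects with finite dimensional cohomology, $F$ restricts to a fully faithful functor $\underline{\cC}_2^\mathrm{f} \to \mathrm{D^f}(S)$ which is essentially surjective because $\mathrm{D^f}(S)$ is generated by the image of $F|_{\underline{\cC}_2^\mathrm{f}}$ (the object $\bC = F((x,y^0)/y\cdot(x,y^0))$, or any shift thereof, generates $\mathrm{D^f}(S)$). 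This gives the claimed equivalence.

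The main obstacle — really the only non-formal point — is making precise the identification of "objects of $\mathrm{Perf}(S)$ with finite dimensional cohomology" with "$\mathrm{D^f}(S)$", i.e.\ checking that the perfect derived category of $S$ already contains all the finite-dimensional-cohomology dg modules and that no genuinely larger category is needed. This rests on $S$ being homologically smooth and proper (finite global dimension, bounded total cohomology), which forces $\mathrm{D^f}(S) \subseteq \mathrm{Perf}(S) = \mathrm{D}^b(S)$; once this is granted, the restriction of the equivalence is immediate and the rest is bookkeeping with the explicit module-to-dg-module dictionary. I would cite \cite{HJ-cat} for the definition of $\mathrm{D^f}(S)$ and for the fact that it sits inside $\mathrm{Perf}(S)$ in this setting.
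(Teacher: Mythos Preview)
Your approach is essentially the same as the paper's: restrict the equivalence of Proposition \ref{P: our category} using the observation that the indecomposable generically free modules $(x,y^k)(j)$ are exactly those corresponding, under the dictionary of Section \ref{S:dg algebra}, to dg $S$-modules with finite dimensional cohomology. The paper states this in one sentence and declares the corollary immediate; you spell out the bookkeeping (full faithfulness, essential surjectivity, and the containment $\mathrm{D^f}(S)\subseteq\mathrm{Perf}(S)$) that the paper leaves implicit.

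One correction: $S=\bC[y]$ with zero differential is \emph{not} proper, since $H^*(S)=\bC[y]$ is infinite dimensional over $\bC$. What you actually need, and what you do have, is smoothness (global dimension $1$ over $S^e$, as computed in Lemma \ref{L:intrinsically formal}); for a smooth dg algebra one has $\mathrm{D^f}(S)\subseteq\mathrm{Perf}(S)$, which is the inclusion you want. Your parenthetical about ``finite dimensional total cohomology is automatically bounded'' should be dropped. Also, your attempt to exhibit $\bC$ as the image of something in $\underline{\cC}_2^{\mathrm{f}}$ is muddled: the short exact sequence you cite has $\bC[y]$ (not generically free) as its outer terms, so it does not directly place $\bC$ in the image of $F|_{\underline{\cC}_2^{\mathrm{f}}}$. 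The clean argument is that the equivalence of Proposition \ref{P: our category} is compatible with the explicit $\mathrm{gr}\,R\cong\mathrm{dg}\,S$ correspondence on indecomposables, so essential surjectivity onto $\mathrm{D^f}(S)$ is immediate from the classification of indecomposables on both sides.
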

This is the category studied by Holm and J\o rgensen in \cite{HJ-cat}. They show that $\mathrm{D^f}(S)$ exhibits the combinatorics of a cluster category of infinite type $A$: Indecomposable objects correspond to arcs in an $\infty$-gon, with cluster tilting objects corresponding to suitably nice triangulations thereof. Through the equivalence of Corollary \ref{C: link to HJ}, these descriptions will also extend to $\underline{\cC}_2^\mathrm{f}$. We will return to this in the next section.

\subsection{$\mathcal{C}_{2}$ and completion under homotopy colimits}\label{S:Fisher}

In \cite{F16}, Fisher completed the category $\mathrm{D^f}(S)$ under certain homotopy colimits, arriving at a triangulated category $\overline{\mathcal{D}} \subset \mathrm{D}(S)$ with indecomposable objects corresponding to arcs in the completed $\infty$-gon.

\begin{proposition}\label{P:Fisher}
	There is an equivalence of categories
	\[
		\overline{\mathcal{D}} \cong \underline{\mathcal{C}}_{2}.
	\]
\end{proposition}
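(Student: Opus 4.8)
The plan is to deduce the equivalence from Propositions~\ref{P:generator} and~\ref{P: our category}: it suffices to exhibit a generator of $\overline{\mathcal{D}}$ whose graded endomorphism ring is $\bC[y]$ with $y$ in degree $-1$. Recall from \cite{F16} that, by construction, $\overline{\mathcal{D}}$ is a full triangulated subcategory of $\mathrm{D}(S)$, generated as a triangulated category by $\mathrm{D^f}(S)$ together with the homotopy colimits of the direct systems of finite-arc objects modelling the infinite arcs of the completed $\infty$-gon. In particular $\overline{\mathcal{D}}$ is algebraic, being a full triangulated subcategory of the derived category of the dg algebra $S$, so Proposition~\ref{P:generator} will apply as soon as we produce the right generator.

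For the generator I would take $M$ to be one of these infinite-arc objects. Under the dictionary between dg $S$-modules and graded modules from Section~\ref{S:dg algebra}, a finite-arc object is a grading shift of $\bC[y]/(y^n)$, and the direct system attached to an infinite arc is of the form $\bC[y]/(y)\to\bC[y]/(y^2)\to\cdots$ with transition maps induced by multiplication by $y$. Since these are honest modules with zero differential, the homotopy colimit coincides with the ordinary colimit, which is the graded injective hull $E=E_S(\bC)$ of the residue field $\bC=S/(y)$ (up to a grading shift); thus $M$ is a shift of $E$. Now $E$ is graded-injective over $S$, so $\Ext^{\geq 1}_S(E,E)=0$, and a direct computation of graded endomorphisms (equivalently, graded Matlis duality) shows $\Hom^{*}_S(E,E)\cong\bC[y]$, with the degree $-1$ generator acting as multiplication by $y$. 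Since $\overline{\mathcal{D}}$ is a full subcategory of $\mathrm{D}(S)$, we conclude
\[
	\Ext^{*}_{\overline{\mathcal{D}}}(M,M)\;\cong\;\Ext^{*}_{\mathrm{D}(S)}(M,M)\;\cong\;\bC[y]
\]
as graded algebras, with $y$ in cohomological degree $-1$.

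It remains to see that $M$ generates $\overline{\mathcal{D}}$. The cone of the self-map of $M$ corresponding to the degree $-1$ class $y$ is a shift of $\bC=S/(y)$, so $\bC\in\mathrm{thick}(M)$; as $\mathrm{D^f}(S)$ is the thick subcategory of $\mathrm{D}(S)$ generated by the shifts of $\bC$, this gives $\mathrm{D^f}(S)\subseteq\mathrm{thick}(M)$. Moreover $\mathrm{thick}(M)$ contains every shift of $M$, hence every infinite-arc object, so it contains a generating set for $\overline{\mathcal{D}}$; conversely $\mathrm{thick}(M)\subseteq\overline{\mathcal{D}}$ since $M\in\overline{\mathcal{D}}$. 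Therefore $\overline{\mathcal{D}}=\mathrm{thick}(M)$, Proposition~\ref{P:generator} yields $\overline{\mathcal{D}}\cong\mathrm{Perf}(S)$, and combining with the equivalence $\underline{\cC}_2\cong\mathrm{Perf}(S)$ of Proposition~\ref{P: our category} gives $\overline{\mathcal{D}}\cong\underline{\cC}_2$.

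The step I expect to be the main obstacle is the bookkeeping in the second paragraph: one must carefully match Fisher's explicit homotopy colimits and his indexing of arcs with the dg-module dictionary of Section~\ref{S:dg algebra}, verify that the relevant colimit is the injective hull $E$ (and not, say, the $(y)$-adic completion $\bC[[y]]$, which would come from the corresponding inverse system), and compute its graded endomorphism ring precisely. Once this identification is secured, the remaining points — algebraicity of $\overline{\mathcal{D}}$, the $\Ext$-computation, and generation — are routine.
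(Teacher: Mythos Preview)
Your proposal is correct and follows essentially the same strategy as the paper: both pick an infinite-arc object as generator, identify its homotopy colimit explicitly (you recognise it as the graded injective hull $E_S(\bC)$, the paper computes it as $\bC[y^{-1}]$---the same module), verify $\Ext^*\cong\bC[y]$, and deduce generation from a short exact sequence relating $E$ and $\bC$. The only substantive difference is in the $\Ext$-computation: the paper cites \cite[Theorem~2.8]{F16} for the dimensions and then checks non-nilpotency of the degree $-1$ class by hand, whereas you appeal to graded injectivity of $E$ (which does imply K-injectivity in $\mathrm{D}(S)$, though this deserves a line of justification) and graded Matlis duality $\End^*_S(E)\cong S$; your route is slightly more self-contained. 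One small imprecision: ``honest modules with zero differential'' is not quite the reason hocolim agrees with colim---the point is that $1-\text{shift}$ on $\bigoplus X_i$ is injective, so its cone is its cokernel, exactly as the paper argues via the short exact sequence.
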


\begin{proof}
	
	By Propositions \ref{P:generator} and \ref{P: our category}, it is enough to show that $\overline{\mathcal{D}}$ has a generator $M$ such that $\Ext^*_{\overline{\mathcal{D}}}(M,M)$ is isomorphic to $\C[y]$ as a graded algebra. By \cite[Definition 1.5]{F16} the only indecomposable objects in $\overline{\mathcal{D}}$, up to isomorphism, are 
	\begin{align*}
	    \{X_i(j) \mid i,j \in \Z, i \geq 0\} \cup \{E_n \mid n \in \Z\}
	\end{align*}
	where $X_i= \C[y]/(y^{i+1})$ and $E_n$ is the homotopy colimit of the direct system
	\begin{align}
	    X_0(n) \xrightarrow{y} X_1(n-1) \xrightarrow{y} X_2(n-2) \xrightarrow{y} X_3(n-3) \to \dots \label{eq: direct system}
	\end{align}
	in the derived category $D(S)$ of right dg-modules over $S=\C[y]$. We claim that $E_0$ (or in fact any $E_n$) is the required generator.

	To start, note that \eqref{eq: direct system} is also a direct system in the abelian category of right dg-modules over $S$, where the colimit can easily be calculated as the dg-module $\C[y^{-1}](n)$. In particular, there is a short exact sequence
	\begin{align*}
	    0 \to \prod_{\mathbb{N}} X_i(n-i) \xrightarrow{1-y} \prod_{\mathbb{N}} X_i(n-i) \to \C[y^{-1}](n) \to 0
	\end{align*}
	in the category of right dg modules over $S$. By definition of homotopy colimits, the induced triangle in the derived category shows that $E_n \cong \C[y^{-1}](n)$.
	
	Now, \cite[Theorem 2.8]{F16}, shows that 
	\begin{align*}
	    \Ext^i_{\overline{\mathcal{D}}}(E_0,E_0) \cong \left\{\begin{array}{lr}
        \C & \text{if } i\leq 0\\
        0 & \text{if } i > 0
        \end{array}\right.
	\end{align*}
	and, knowing $E_0 \cong \C[y^{-1}]$, one can check the map in degree -1 is the map of complexes,
     	\[
 		\xymatrix{\ldots \ar[r] & 0  \ar[r] \ar[d] & \C  \ar[r] \ar[d] & \C \ar[r] \ar[d]^1 & \C \ar[r] \ar[d]^1 & \ldots \\
 		\ldots \ar[r] & 0  \ar[r] & 0  \ar[r] & \C \ar[r] & \C \ar[r] & \ldots}
 	    \]	
 	where all differentials are zero. Since this map is clearly not nilpotent in $\Ext^*_{\overline{\mathcal{D}}}(E_0,E_0)$, the isomorphism of graded algebras, $ \Ext^*_{\overline{\mathcal{D}}}(E_0,E_0) \cong \C[y]$, follows.
	
	As $E_n=E_0(n)$, it is clear that $E_0$ generates all the $E_n$. To show that it also generates the $X_i$ note that there is a short exact sequence
	\begin{align*}
	    0 \to X_i \xrightarrow{y^{-i}} \C[y^{-1}](i) \to \C[y^{-1}](-1) \to 0
	\end{align*}
	in the category of right dg-modules over $S$. This induces a triangle in the derived category, which shows that $E_0$ generates $X_i$, and hence all of $\overline{\mathcal{D}}$ as required.
\end{proof}

\subsection{$\mathcal{C}_{2}$ and a combinatorial completion}\label{S:PY}
In \cite{PY21}, Paquette and Y{\i}ld{\i}r{\i}m present a completion of discrete cluster categories of type $A$. These are, like Igusa and Todorov's discrete cluster categories of type $A$, associated to a generalised $\infty$-gon, that is, a disc with discrete marked points on its boundary, satisfying some mild convergence condition. Indecomposable objects correspond to arcs in the generalised $\infty$-gon, that is, two-element subsets consisting of non-neighbouring marked points, and morphisms  can be read off by the respective positioning of the arcs. Unlike Igusa and  Todorov, Paquette and Y{\i}ld{\i}r{\i}m allow the accumulation points to be marked points themselves.

We show that in the ``one-accumulation point'' case, that is, when the marked points on the boundary include an unique two-sided accumulation point, Paquette and Y{\i}ld{\i}r{\i}m's construction coincides with the stable category $\underline{\mathcal{C}}_{2}$.

Let $M \subseteq S^1$ be a set of marked points on the circle with precisely one two-sided accumulation point. Denote by $\overline{\mathcal{C}}_M$ the completed cluster category in the sense of Paquette--Y{\i}ld{\i}r{\i}m \cite{PY21} (denoted $\overline{\mathcal{C}}_{(S,M)}$ there).

\begin{proposition}\label{P:PY}
	There is an equivalence of categories
	\[
		\overline{\mathcal{C}}_M \cong \underline{\mathcal{C}}_{2}.
	\]
\end{proposition}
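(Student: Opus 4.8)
The plan is to use Proposition \ref{P:generator} and Proposition \ref{P: our category} exactly as in the proof of Proposition \ref{P:Fisher}: it suffices to exhibit a generator $M$ of $\overline{\mathcal{C}}_M$ whose graded endomorphism algebra $\Ext^*_{\overline{\mathcal{C}}_M}(M,M)$ is isomorphic to $\bC[y]$ with $y$ in degree $-1$. Since we already know $\overline{\mathcal{D}} \cong \underline{\mathcal{C}}_2$, an alternative (and perhaps cleaner) route is to construct an equivalence $\overline{\mathcal{C}}_M \cong \overline{\mathcal{D}}$ directly, or simply to note that both sides are algebraic triangulated categories and run the generator argument on $\overline{\mathcal{C}}_M$ itself. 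I would pursue the latter, keeping the structure parallel to the Fisher case.

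First I would recall from \cite{PY21} the explicit description of the indecomposable objects of $\overline{\mathcal{C}}_M$ in the one-accumulation-point case: they are indexed by arcs of the completed $\infty$-gon, that is, by pairs $\{a,b\}$ of non-neighbouring marked points in $M \subseteq S^1$, where one allows the accumulation point as an endpoint. Writing the integer marked points as $\bZ$ and the accumulation point as $\infty$ (or $-\infty$ in the completed-polygon convention of the introduction), these split into the "finite" arcs $\{i,j\}$ with $i,j \in \bZ$ and the "infinite" arcs $\{i,\infty\}$; under the expected dictionary the finite arcs match the objects $X_i(j)$ of $\overline{\mathcal{D}}$ and the infinite arcs match the objects $E_n$. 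I would then identify a distinguished infinite arc — the analogue of $E_0$ — as the candidate generator $M$, and verify it generates: the shift functor on $\overline{\mathcal{C}}_M$ is realised combinatorially by rotating arcs by one marked point (this is how the equivalence with $\underline{\mathcal{C}}_2$ matches the suspension $\Omega^{-1}$, cf. Remark \ref{R : suspension=gradingshift}), so all other infinite arcs are shifts of $M$ and hence in its thick closure; and each finite arc fits into an Auslander–Reiten or "rotation" triangle built from infinite arcs, exactly mirroring the short exact sequences $0 \to X_i \xrightarrow{y^{-i}} \bC[y^{-1}](i) \to \bC[y^{-1}](-1) \to 0$ used in the proof of Proposition \ref{P:Fisher}. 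This shows $M$ generates $\overline{\mathcal{C}}_M$.

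The computational heart is then the graded endomorphism ring. Here I would use the morphism and extension formulas of \cite{PY21}, which express $\Hom_{\overline{\mathcal{C}}_M}$ and $\Ext^1_{\overline{\mathcal{C}}_M}$ between indecomposables in terms of the relative position of their arcs (crossings, shared endpoints, "nesting"). For the infinite arc $M$ and its shifts $M(-i)$, one reads off that $\Ext^{-i}_{\overline{\mathcal{C}}_M}(M,M)$ is one-dimensional for every $i \ge 0$ and vanishes for $i>0$ in the other direction — matching the computation $\Ext^i_{\overline{\mathcal{D}}}(E_0,E_0) \cong \bC$ for $i \le 0$ and $0$ for $i>0$ from \cite[Theorem 2.8]{F16}. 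To pin down the algebra structure rather than just the graded vector space, I would exhibit the degree $-1$ element (the canonical nonzero map $M \to M(-1)$ coming from the inclusion of one infinite arc into its rotate) and check its $i$-th power is the nonzero element in degree $-i$, so it is not nilpotent; since a graded algebra with one-dimensional pieces in each nonpositive degree and a non-nilpotent degree $-1$ generator is forced to be $\bC[y]$, we conclude $\Ext^*_{\overline{\mathcal{C}}_M}(M,M) \cong \bC[y]$ with $y$ in degree $-1$. Proposition \ref{P:generator} then gives $\overline{\mathcal{C}}_M \cong \mathrm{Perf}(S) \cong \underline{\mathcal{C}}_2$, the last equivalence by Proposition \ref{P: our category}.

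The main obstacle I anticipate is bookkeeping at the accumulation point: the morphism/extension combinatorics in \cite{PY21} is defined by a case analysis on arc positions, and the cases involving an arc with an accumulation-point endpoint (and especially two such arcs, or an infinite arc together with its own shift) are exactly the subtle ones — one must be careful about which of the two "sides" of the accumulation point an arc limits to, and about the convergence/projective-object conventions that distinguish $\overline{\mathcal{C}}_M$ from the non-completed Igusa–Todorov category. Getting the direction of the non-vanishing $\Ext$ groups right (degree $-i$ versus $+i$), and hence matching the grading on $S=\bC[y]$ with $y$ in degree $-1$ rather than $+1$, is the place where a sign/direction error would be easy to make; I would cross-check it against the already-established $E_0$ computation in the proof of Proposition \ref{P:Fisher} to make sure the two completions are being compared consistently. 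Once that dictionary is fixed, the remaining verifications are routine applications of the formulas in \cite{PY21} together with intrinsic formality (Lemma \ref{L:intrinsically formal}).
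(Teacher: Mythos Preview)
Your proposal is correct and follows essentially the same approach as the paper: pick an infinite arc as generator, compute its graded endomorphism ring as $\bC[y]$ using the combinatorial Hom/Ext formulas of \cite{PY21}, and invoke Proposition \ref{P:generator}. The paper is terser in places---it cites \cite[Proposition 3.4]{PY21} for the Ext dimensions, \cite[Lemma 2.4.2]{ITcyclic} for the factorisation giving the algebra structure, and Figure 2 of \cite{PY21} for generation---whereas you spell out the generation step via explicit triangles and argue the algebra structure via non-nilpotency of the degree $-1$ map; but these are cosmetic differences rather than a genuinely different route.
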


\begin{proof}
	Let $N$ be an indecomposable object in $\overline{\mathcal{C}}_M$ corresponding to an arc $\ell_0$ connecting to the accumulation point of $M$. Denote by $\Sigma$ the suspension in $\overline{\mathcal{C}}_M$. Then for all $i \in \bZ$, the object $\Sigma^i N$ also corresponds to an arc $\ell_i$ connecting to the accumulation point of $M$, and we can go from $\ell_j$ to $\ell_i$ by rotating $\ell_j$ about the common endpoint following the orientation of the unit disc if and only if $j \geq i$. By \cite[Proposition 3.4]{PY21}, we have 
	\[
		\Ext^i(N,N) \cong \Hom(N,\Sigma^i N) \cong \begin{cases}
			 \mathbb{C}, \text{if $i \leq 0$}\\
			 0, \text{else}.
		 \end{cases}		
	\] 
By the construction of $\overline{\mathcal{C}}_M$ in \cite{PY21} and \cite[Lemma 2.4.2]{ITcyclic}, we see that for $i > 0$, a morphism $f \in \Ext^{-i}(N,N)$ factors through the $i$-fold product $\Ext^{-1}(N,N) \times \ldots \times \Ext^{-1}(N,N)$ and so the graded endomorphism ring of $N$ is isomorphic to $\bC[y]$ with $y$ in degree $-1$. The statement now follows from Figure 2 in \cite{PY21} and accompanying comments  which show that $N$ is a generator, and Proposition \ref{P:generator}. 
\end{proof}

\section{The combinatorial model and cluster tilting} \label{S:combinatorial model}
From the set-up in \cite{HJ-cat}, \cite{F16} and \cite{PY21}, we know that there is a combinatorial model for the categories described in Sections \ref{S:HJ} - \ref{S:PY} via arcs in the (completed) $\infty$-gon. In this section, we will extend this model to the Grassmannian category $\cC_2$ from Section \ref{S:Grassmannian category}.

\subsection{The completed $\infty$-gon}

An {\em $\infty$-gon} is a disc with a discrete set of marked points on the boundary admitting a unique two-sided accumulation point. We obtain a {\em completed $\infty$-gon} by adding the unique accumulation point as a marked point. In practice, we label the marked points by $\bZ$ (increasing clockwise around the disc), and call the accumulation point $-\infty$.

An {\em arc of the completed $\infty$-gon} is then a pair $(a,b)$ in $\bZ \cup \{-\infty\}$, such that $a < b$. We call an arc $(a,b)$ {\em finite} if $a,b \in \bZ$, and {\em infinite} if $a = -\infty$.
These can be illustrated as in the following pictures:

\begin{center}
\begin{tikzpicture}
\node at (-3,-.3) {
\begin{tikzpicture}[scale=0.5]
\draw (-4,0) -- (4,0);
\draw (2,0) arc (0:180:2.5);
\node at (-3,-.5) {$a$};
\node at (2,-.5) {$b$};
\end{tikzpicture}};
\node at (3,0) {
\begin{tikzpicture}[scale=0.5]
\draw (-4,0) -- (4,0);
\draw (0,2.7) -- (-1,0);
\node at (0,3.2) {$a=\scriptstyle -\infty$};
\node at (-1,-.5) {$b$};
\node at (0,2.7) {$\bullet$};
\end{tikzpicture}};
\end{tikzpicture}
\end{center}

Here, the horizontal line represents $\bZ$ and the point $-\infty$ sits separately above. When we talk about the $\infty$-gon we only consider the finite arcs, whereas the completed $\infty$-gon allows both the finite and infinite arcs.

In either case, two arcs $(a,b)$ and $(c,d)$ {\em cross} if $a<c<b<d$ or $c<a<d<b$. This notion gives rise to the following idea: A {\em triangulation of the (completed) $\infty$-gon} is a maximal set of non-crossing arcs of the (completed) $\infty$-gon. 

A triangulation $T$ is called {\em locally finite} if for all $a \in \bZ$ there are only finitely many arcs in $T$ with endpoint $a$. A set of arcs $\{(a,b_i) \mid i \in \bN\}$ is called a {\em right fountain at $a$}, if $\{b_i\}$ is a strictly increasing sequence. Similarly, a set of arcs $\{(b_i,a) \mid i \in \bN\}$ is called a {\em left fountain at $a$}, if $\{b_i\}$ is a strictly decreasing sequence. A {\em fountain at $a$} is the union of a left fountain at $a$ and a right fountain at $a$. 

For the $\infty$-gon, every triangulation is either locally finite, or contains both a left and right fountain \cite[Lemma 3.3]{HJ-cat}. For the completed $\infty$-gon, each triangulation contains precisely one of the following five configurations, where we note that each schematic triangle in the picture may contain a triangulation by finitely many arcs: 

\begin{center}
\begin{tikzpicture}
\node at (-3,0) {
\begin{tikzpicture}[scale=0.5]
\draw (-5,0) -- (5,0);
\draw (1,0) arc (0:180:1);
\draw (1,0) arc (0:180:1.5);
\draw (2,0) arc (0:180:2);
\draw (2,0) arc (0:180:2.5);
\draw (3,0) arc (0:180:3);
\node at (4,1.5) {$\cdots$};
\node at (-4,1.5) {$\cdots$};
\end{tikzpicture}};
\node at (0,-6) {
\begin{tikzpicture}[scale=0.5]
\draw (-6,0) -- (6,0);
\draw (0,3) -- (0,0);
\draw (0,3) -- (1,0);
\draw (0,3) -- (-1,0);
\draw (0,3) -- (2,0);
\draw (0,3) -- (-2,0);
\draw (-2,0) arc (0:180:1);
\draw (-2,0) arc (0:180:1.5);
\draw (-2,0) arc (0:180:0.5);
\draw (4,0) arc (0:180:1);
\draw (5,0) arc (0:180:1.5);
\draw (3,0) arc (0:180:0.5);
\node at (6,0.75) {$\cdots$};
\node at (-6,0.75) {$\cdots$};
\node at (0,3.3) {$\scriptstyle -\infty$};
\end{tikzpicture}};
\node at (-3,-3) {
\begin{tikzpicture}[scale=0.5]
\draw (-5,0) -- (5,0);
\draw (0,3) -- (0,0);
\draw (0,3) -- (-1,0);
\draw (0,3) -- (-2,0);
\draw (0,3) -- (-3,0);
\draw (0,3) -- (-4,0);
\draw (2,0) arc (0:180:1);
\draw (3,0) arc (0:180:1.5);
\draw (1,0) arc (0:180:0.5);
\node at (4,0.75) {$\cdots$};
\node at (-4,1.5) {$\cdots$};
\node at (0,3.3) {$\scriptstyle -\infty$};
\end{tikzpicture}};
\node at (3,-3) {
\begin{tikzpicture}[scale=0.5]
\begin{scope}[yscale=1,xscale=-1]
\draw (-5,0) -- (5,0);
\draw (0,3) -- (0,0);
\draw (0,3) -- (-1,0);
\draw (0,3) -- (-2,0);
\draw (0,3) -- (-3,0);
\draw (0,3) -- (-4,0);
\draw (2,0) arc (0:180:1);
\draw (3,0) arc (0:180:1.5);
\draw (1,0) arc (0:180:0.5);
\node at (4,0.75) {$\cdots$};
\node at (-4,1.5) {$\cdots$};
\node at (0,3.3) {$\scriptstyle -\infty$};
\end{scope}
\end{tikzpicture}};
\node at (3,0) {
\begin{tikzpicture}[scale=0.5]
\draw (-5,0) -- (5,0);
\draw (0,3) -- (0,0);
\draw (0,3) -- (-1,0);
\draw (0,3) -- (1,0);
\draw (0,3) -- (-2,0);
\draw (0,3) -- (2,0);
\draw (0,3) -- (-3,0);
\draw (0,3) -- (3,0);
\draw (0,3) -- (-4,0);
\draw (0,3) -- (4,0);
\node at (4,1.5) {$\cdots$};
\node at (-4,1.5) {$\cdots$};
\node at (0,3.3) {$\scriptstyle -\infty$};
\end{tikzpicture}};
\end{tikzpicture}
\end{center}

This classification follows from the following observations:
\begin{itemize}
\item \cite[Lemma 1.10]{BG} If $T$ contains a left (or right) fountain at $n \in \bZ$, then $(-\infty, n) \in T$ - such an infinite arc is called a \emph{wrapping arc}.
\item \cite[Lemma 1.11]{BG} If $(-\infty, n) \in T$, then either $T$ has a left fountain at $n \in \bZ$, or there exists $m < n$ such that $(-\infty, m) \in T$.
Similarly for right fountains.
\end{itemize}
See \cite[Theorem~1.12]{BG} for a precise description of the triangulations.

\subsection{A model for $\cC_2$}\label{S:model}

Using the classification of indecomposable objects in the category $\cC_2$ from Proposition \ref{prop: classify objects}, we see they are in a natural one-to-one correspondence with the arcs of the completed $\infty$-gon in the following way:

For all $k \in \bZ_{\geq 0}$ and all $j \in \bZ$ we associate to the graded module $(x,y^k)(j)$ the finite arc $(-j-k, 1-j)$ and to $\bC[y](j)$ the infinite arc $(-\infty,-j)$. From now on, we freely use this identification, and will refer to indecomposable objects in $\cC_2$ as arcs when convenient.

Note that the boundary arcs (those of the form $(a, a+1)$) precisely correspond to the modules $R(j)$ which are the projective-injective objects in $\cC_2$. Moreover, the internal arcs correspond to indecomposable objects of $\underline{\cC}_2$ and this provides an explicit equivalence between the stable category $\underline{\cC}_2$ and the category $\overline{\cC}_M$ constructed in \cite{PY21}. As a consequence, we get the following description of Ext groups.

\begin{proposition} \label{prop:exts}
    Let $(a,b)$ and $(c,d)$ be indecomposable objects in $\CM^\Z R$. Then
    \[
        \Ext^1((a,b),(c,d)) \cong \begin{cases}
                                        \C & \text{if $(a,b)$ and $(c,d)$ cross}\\
                                        \C & \text{if $a = c = -\infty$ and $b < d$}\\
                                        0 & \text{else}.
                                    \end{cases}
    \]
\end{proposition}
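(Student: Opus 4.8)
The plan is to compute $\Ext^1$ directly from the matrix-factorisation/syzygy description of the modules in Proposition \ref{prop: classify objects}, translated into the dg-module picture of Section \ref{S:dg algebra}. Since $\cC_2$ is Frobenius with projective-injectives $\add(R)$, we have $\Ext^1_{\cC_2}(X,Y) \cong \underline{\Hom}(\Omega X, Y) \cong \underline{\Hom}(X, \Omega^{-1} Y)$, and by Remark \ref{R : suspension=gradingshift} the syzygy $\Omega$ agrees with the grading shift $(-1)$ on all non-projective indecomposables; hence it suffices to compute stable homomorphism spaces between the modules $\bC[y](j)$ and $(x,y^k)(j)$ and their shifts. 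Equivalently, via Proposition \ref{P: our category} we may compute in $\mathrm{Perf}(S)$, where all the relevant objects are the explicit dg $S$-modules listed in Section \ref{S:dg algebra}, and $\Ext^1$ becomes $\Hom_{D(S)}(-, \Sigma -)$ between two-term-type complexes; both routes amount to the same linear algebra.

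First I would dispose of the case where one of the two objects is projective-injective, i.e.\ of the form $R(j) = (x,y^0)(j)$, corresponding to a boundary arc $(a,a+1)$: such an arc never crosses anything and the second case of the statement requires $a=-\infty$, so the claimed answer is $0$, which is immediate since $R(j)$ is injective in the Frobenius category $\cC_2$. So assume both objects are non-projective. I would then split into three sub-cases according to how many of the two arcs are infinite. Case (i): both arcs finite, so both modules are generically free ideals $(x,y^k)(j)$ and $(x,y^l)(i)$; here I would invoke \cite[Proposition 3.12]{ourfirstpaper} that $\cC_2^{\mathrm{f}}$ is stably $2$-Calabi--Yau together with the equivalence of Corollary \ref{C: link to HJ} to the Holm--J\o rgensen category, where it is known (\cite[Section 3]{HJ-cat}) that $\Ext^1$ between two arcs is $\C$ if they cross and $0$ otherwise; this matches the first case of the statement (and the second is vacuous). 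Alternatively one computes the syzygy of $(x,y^k)(j)$ from the rank-$2$ matrix factorisation and takes stable homs directly — a short calculation with $2\times 2$ matrices over $\bC[x,y]/(x^2)$.

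Case (ii): exactly one arc is infinite. Using the dg-module description, $\bC[y](j)$ has cohomology in a single degree while $(x,y^k)(i)$ has bounded cohomology; I would compute $\Hom_{D(S)}(\bC[y](j), \Sigma (x,y^k)(i))$ and $\Hom_{D(S)}((x,y^k)(i), \Sigma \bC[y](j))$ by projective resolutions and check that the rank of the resulting $\bC$-vector space is exactly $1$ when the corresponding finite and infinite arcs cross and $0$ otherwise — note an infinite arc $(-\infty,-i)$ and a finite arc $(c,d)$ cross precisely when $c < -i < d$, which must be read off and matched against the module computation. Case (iii): both arcs infinite, say $\bC[y](j)$ and $\bC[y](j')$ with $j \ne j'$ (the case $j=j'$ being $\Ext^1$ of an indecomposable with itself, which is $0$ since these objects are rigid — or one just computes it). Via $E_n \cong \C[y^{-1}](n)$ from the proof of Proposition \ref{P:Fisher}, or \cite[Theorem 2.8]{F16}, one has $\Ext^1$ between any two distinct $E_n$'s equal to $\C$, with the direction dictated by which is the shift of the other; translating the indexing shows this is the case $a=c=-\infty$, $b<d$.

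The main obstacle I expect is bookkeeping the dictionary between the algebraic shifts ($\Omega$, grading shift $(j)$, suspension $\Sigma$ in $\underline{\cC}_2$) and the geometric data of the arcs — in particular pinning down exactly when two arcs cross in terms of the parameters $(k,j)$, and matching the asymmetry in the infinite–infinite case ($b<d$ versus $b>d$) with the non-symmetry of $\Ext^1$ coming from the fact that $\underline{\cC}_2 \cong \overline{\cC}_M$ is only $0$-Calabi--Yau-ish on the infinite arcs, not $2$-Calabi--Yau. A clean way to handle all of this uniformly is to transport the entire statement through the equivalence $\underline{\cC}_2 \cong \overline{\cC}_M$ of Proposition \ref{P:PY} and quote the morphism description of Paquette--Y\i ld\i r\i m \cite[Proposition 3.4]{PY21} together with \cite[Proposition 1.10]{BG}; then the only remaining work is to check that the identification of indecomposables with arcs fixed in Section \ref{S:model} is the same one under which \cite{PY21} computes Hom-spaces, which follows from the explicit matching already used in the proof of Proposition \ref{P:PY}. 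I would present the $\mathrm{Perf}(S)$ computation as the main argument and remark that the case analysis also follows from the cited combinatorial models.
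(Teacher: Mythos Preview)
Your proposal is correct, and the ``clean way'' you describe at the end is precisely what the paper does: it transports the statement through the equivalence $\underline{\cC}_2 \cong \overline{\cC}_M$ of Proposition~\ref{P:PY} and quotes the $\Ext$-computation from \cite{PY21} (specifically \cite[Proposition~3.14]{PY21} rather than Proposition~3.4, which is a $\Hom$-statement and would still require you to unravel the suspension). The paper's proof is two sentences long; it also notes, as you do, that the finite--finite case follows from \cite{ourfirstpaper} and that everything can alternatively be obtained by direct matrix-factorisation calculations.

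Your main proposal --- the three-case computation in $\mathrm{Perf}(S)$ --- is a genuinely different route, and it would work, but it is considerably more labour than necessary. The advantage of your approach is that it is self-contained and does not rely on the rather delicate matching of indecomposables under the equivalences of Section~3 (which, as you correctly flag, is the main source of error). The advantage of the paper's approach is brevity: once Proposition~\ref{P:PY} is established, the result is a one-line citation, and the indexing check you worry about is already absorbed into the explicit arc labelling fixed at the start of Section~\ref{S:model}. One small slip in your case~(iii): saying the self-$\Ext^1$ vanishes ``since these objects are rigid'' is circular --- that is part of what is being proved --- so you would indeed need the direct computation there.
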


\begin{proof}
    This is immediate from the equivalence between $\underline{\cC}_2$ and $\overline{\cC}_M$ and \cite[Proposition 3.14]{PY21}. Note that the computation for finite arcs also follows from \cite[Theorem C]{ourfirstpaper}, and direct calculations can be done using the matrix factorisations in Proposition \ref{prop: classify objects}.
\end{proof}

Some cases with non-zero $\Ext^1((a,b),(c,d))$ are schematically illustrated as follows: 
\begin{center}
\begin{tikzpicture}
\node at (-3,-.5) {
\begin{tikzpicture}[scale=0.5]
\draw (-4,0) -- (4,0);
\draw (0,0) arc (0:180:1.5);
\draw (2,0) arc (0:180:1.5);
\node at (-3,-.5) {$a$};
\node at (0,-.5) {$b$};
\node at (-1,-.5) {$c$};
\node at (2,-.5) {$d$};
\end{tikzpicture}};
\node at (3,0) {
\begin{tikzpicture}[scale=0.5]
\draw (-4,0) -- (4,0);
\draw (0,2.7) -- (-1,0);
\draw (0,2.7) -- (2,0);
\node at (0,3.2) {$a=c=\scriptstyle -\infty$};
\node at (-1,-.5) {$b$};
\node at (2,-.5) {$d$};
\node at (0, 2.7) {$\bullet$};
\end{tikzpicture}};
\end{tikzpicture}
\end{center}

\begin{center}
\begin{tikzpicture}[scale=0.5]
\draw (-4,0) -- (4,0);
\draw (1,0) arc (0:180:1.5);
\node at (-2,-.5) {$c$};
\node at (1,-.5) {$d$};
\draw (0,2.7) -- (-.5,0);
\node at (0,3.2) {$a=\scriptstyle -\infty$};
\node at (0, 2.7) {$\bullet$};
\node at (-0.5,-.5) {$b$};
\end{tikzpicture}
\end{center}

Some cases where $\Ext^1((a,b),(c,d))=0$ are illustrated as follows: 
\begin{center}
\begin{tikzpicture}
\node at (-3,-.5) {
\begin{tikzpicture}[scale=0.5]
\draw (-4,0) -- (4,0);
\draw (-1,0) arc (0:180:1);
\draw (3,0) arc (0:180:.75);
\node at (-3,-.5) {$a$};
\node at (-1,-.5) {$b$};
\node at (1.5,-.5) {$c$};
\node at (3,-.5) {$d$};
\end{tikzpicture}};
\node at (3,0) {
\begin{tikzpicture}[scale=0.5]
\draw (-4,0) -- (4,0);
\draw (0,2.7) -- (-1,0);
\draw (3,0) arc (0:180:.75);
\node at (1.5,-.5) {$c$};
\node at (3,-.5) {$d$};
\node at (0,3.2) {$a=\scriptstyle -\infty$};
\node at (0, 2.7) {$\bullet$};
\node at (-1,-.5) {$b$};
\end{tikzpicture}};
\end{tikzpicture}
\end{center}

Notice that the $\Ext^1$-groups for the finite arcs are symmetric in the two arguments, corresponding to the subcategory $\cC_2^\mathrm{f}$ being stably 2-Calabi-Yau.

\begin{remark}
    Although we have used the equivalences from Sections \ref{S:HJ} - \ref{S:PY} to endow $\cC_2$ with a combinatorial model, it is worth noting that this is not necessary. Indeed, in the course of this project, we first showed $\cC_2$ had a combinatorial model by computing the $\Ext^1$-groups by hand and once we had established the model we saw the possibility of the equivalences.
\end{remark}

We may also read the Hom spaces from the combinatorial model. This is possible given the explicit description of all the indecomposable objects.

\begin{proposition}
Let $(a,b)$ and $(c,d)$ be indecomposable objects in $\CM^\Z(R)$. Then
\[
    \Hom_R((a,b),(c,d)) \cong \begin{cases}
                            \C^2 & \text{if $-\infty < a \leq c$ and $b \leq d$}\\
                            0 & \text{if $-\infty = a \leq c$ and $d<b$}\\
                            0 & \text{if $d<a$}\\
                            \C & \text{else.}
                                            \end{cases}
\]
\end{proposition}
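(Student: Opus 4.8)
The plan is to compute the Hom space directly using the explicit descriptions of the indecomposable objects from Proposition \ref{prop: classify objects}, exploiting the identification $\mathrm{gr}\,R \cong \mathrm{dg}\,S$ from Section \ref{S:dg algebra}. Writing the indecomposable $(a,b)$ as either a shifted ideal $(x,y^k)(j)$ (finite arc, with $j = 1-b$, $k = b-a$) or as $\bC[y](j)$ (infinite arc, $a = -\infty$, $j = -b$), a morphism between two such modules is a homogeneous $R$-linear map, equivalently a degree-$0$ chain map of the corresponding dg $S$-modules. Since each of these dg modules is a string-like complex with one-dimensional components (or two-dimensional in the projective case), such chain maps are cut out by finitely many linear conditions and can be enumerated by hand.

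First I would dispose of the generic case where both arcs are finite, so we are mapping $(x,y^k)(j) \to (x,y^\ell)(j')$. Viewing both as submodules of $\cF = \bC[x,y^\pm]/(x^2)$ via their embeddings, a morphism is multiplication by some element of $\cF$ of the appropriate degree carrying one submodule into the other; tracking degrees shows this element lies in a subspace of $\cF$ that is either $0$, one-dimensional (spanned by a power of $y$), or two-dimensional (spanned by a power of $y$ together with $x$ times a power of $y$), according to whether the two intervals $[a,b]$ and $[c,d]$ are disjoint/badly nested, overlapping in one endpoint configuration, or satisfy $a \le c$ and $b \le d$. This recovers the first, third and fourth cases (restricted to finite arcs). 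Next I would handle the cases involving the non-generically-free modules $\bC[y](j)$: here the differential (action of $x$) is zero, so a chain map out of $\bC[y](j)$ into a complex with nonzero differential must land in the kernel of that differential, which kills the $x$-direction and forces the $\C^2$ down to $\C$ or to $0$ depending on whether the target is "long enough" — this is exactly the split between the $\C$ case and the second case $0$ when $d < b$. Maps into $\bC[y](c')$ and maps $\bC[y](j) \to \bC[y](j')$ are immediate: the former is again $\C$ or $0$ by the degree/support condition $a \le c$, the latter is $\C$ when $b \le d$ and $0$ otherwise, both consistent with the stated formula.

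A cleaner organizing principle, which I would use to avoid case-chasing, is to invoke the projective resolutions: for finite arcs the count $\dim\Hom = 2$ in the "nested correctly" case reflects that $\Hom_{\underline{\cC}_2}$ contributes at most $\C$ (readable from the equivalence $\underline{\cC}_2 \cong \overline{\cC}_M$ and the morphism description in \cite{PY21}) while the projective-injective part contributes the remaining $\C$; concretely one has the short exact sequence $0 \to \bC[y](-1) \to (x,y^k) \to \bC[y](k-1) \to 0$ from Proposition \ref{P: our category}, and applying $\Hom(-,(c,d))$ and $\Hom((a,b),-)$ to such sequences reduces every case to the two already-understood building blocks $\Hom(\bC[y](j), -)$ and $\Hom(-, \bC[y](j))$. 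Assembling the long exact sequences and using Proposition \ref{prop:exts} to control the connecting maps then pins down each dimension.

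The main obstacle will be the bookkeeping of gradings: one must be careful that the correspondence "arc $(a,b) \leftrightarrow$ module" uses $(x,y^k)(j)$ with $k = b-a$ and $j = 1-b$ (and $\bC[y](j)$ with $j = -b$), and that "homogeneous $R$-linear of degree $0$" is the right notion of morphism, so that the inequalities $a \le c$, $b \le d$ come out in the stated orientation rather than reversed. The only genuinely delicate point is verifying that in the mixed case — a finite arc mapping to an infinite arc, or vice versa — the dimension is exactly $\C$ (not $0$ or $\C^2$): here one checks that the unique (up to scalar) candidate map, multiplication by a suitable power of $y$ followed by the quotient $R \twoheadrightarrow \bC[y]$ or the inclusion of the socle-type submodule, is well-defined and nonzero precisely under the stated endpoint condition. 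I expect this to be a short explicit computation once the degree conventions are fixed.
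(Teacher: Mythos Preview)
Your approach is essentially the same as the paper's: a direct case-by-case computation of degree-$0$ $R$-linear maps between the explicit indecomposables, splitting into the four cases finite-to-finite, finite-to-infinite, infinite-to-finite, and infinite-to-infinite. The paper carries this out via generators and relations (a map is determined by the images of $x$ and $y^k$, subject to $y^k g(x) = x g(y^k)$ and degree constraints) rather than your multiplication-in-$\cF$ viewpoint, but the two are equivalent for rank-one generically free modules; note only that your indexing is off by one, since $(a,b)$ corresponds to $(x,y^{b-a-1})(1-b)$, not $(x,y^{b-a})(1-b)$.
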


\begin{proof}
    See Appendix \ref{S:Hom-calculations}.
\end{proof}

With the knowledge of the homomorphisms contained in Appendix \ref{appendix:A}, it is then also possible to determine the short exact sequences representing the basis elements of the $\Ext^1$-groups. In fact, these representatives can be easily read off the model, as the following three lemmas show. Note that these can be thought of as a lift of the results in \cite[Figures 1 and 2]{PY21} to the Frobenius setting.

\begin{lemma} \label{lem:ses1}
Consider two crossing finite arcs $(a,b)$ and $(c,d)$ as in the following picture:
\begin{center}
    \begin{tikzpicture}[scale=0.6]
    \draw (-5,0) -- (5,0);
    \draw (1,0) arc (0:180:1);
    \draw (3,0) arc (0:180:1);
    \draw (-1,0) arc (0:180:1);
    \draw (3,0) arc (0:180:3);
    \draw[dashed,red] (3,0) arc (0:180:2);
    \draw[dashed,blue] (1,0) arc (0:180:2);
    \node at (-3,-0.5) {$a$};
    \node at (-1,-0.5) {$c$};
    \node at (1,-0.5) {$b$};
    \node at (3,-0.5) {$d$};
\end{tikzpicture}
\end{center}
Then we have non-split short exact sequences between these indecomposables given by
\begin{align*}
    0 \to (a,b) \xrightarrow{f} (c,b) \oplus (a,d) \xrightarrow{g} (c,d) \to 0\\
    0 \to (c,d) \xrightarrow{f'} (a,c) \oplus (b,d) \xrightarrow{g'} (a,b) \to 0.
\end{align*}
\end{lemma}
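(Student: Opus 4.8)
The plan is to exhibit the two short exact sequences explicitly at the level of modules, using the combinatorial dictionary of Section \ref{S:model} and the $\Hom$ computations of Appendix \ref{appendix:A}. First I would unwind the identification: since $(a,b)$ and $(c,d)$ are finite arcs with $a<c<b<d$, each is of the form $(x,y^k)(j)$ for suitable $k\ge 0$ and $j\in\bZ$, and likewise the four arcs $(c,b),(a,d),(a,c),(b,d)$ appearing in the sequences are genuine indecomposable MCM modules (again of the form $(x,y^k)(j)$, possibly $k=0$ giving a shifted copy of $R$). Concretely, writing an arc $(p,q)$ as the module $(x,y^{q-p-1})(1-q)$, I would record the explicit shifts so that all six modules live inside the graded total ring of fractions $\cF$ and the maps can be written as honest $R$-linear (degree-preserving) maps given by multiplication by monomials in $x$ and $y$.

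Next I would write down $f$ and $g$. The map $f\colon (a,b)\to (c,b)\oplus(a,d)$ should be the diagonal-type map built from the two natural inclusions/quotient maps dictated by the crossing picture (the arc $(a,b)$ maps to $(c,b)$ by "moving the left endpoint right from $a$ to $c$" and to $(a,d)$ by "moving the right endpoint right from $b$ to $d$"), and $g$ is the corresponding difference map $(c,b)\oplus(a,d)\to(c,d)$. Using the explicit monomial descriptions one checks $g\circ f=0$, that $f$ is injective, that $g$ is surjective, and that $\img f=\ker g$; all of these are short monomial/degree bookkeeping verifications. The same is done for the second sequence with $(a,c)\oplus(b,d)$ in the middle, which is the "dual" configuration $c<a<d<b$ read from the other side — here I would either repeat the computation or, better, observe that this sequence is obtained from the first by applying a suitable symmetry (e.g. the duality on $\cC_2^{\mathrm{f}}$ coming from stable $2$-Calabi-Yau-ness, or simply swapping the roles of the two crossing arcs, since crossing is symmetric). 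Finally, non-splitness follows because by Proposition \ref{prop:exts} we have $\Ext^1((a,b),(c,d))\cong\bC$, so the unique (up to scalar) nonzero extension class is the one represented by any non-split sequence with these end terms; since the middle term $(c,b)\oplus(a,d)$ (resp. $(a,c)\oplus(b,d)$) is not isomorphic to $(a,b)\oplus(c,d)$ (their arc sets differ), the sequence cannot split, hence represents a generator of $\Ext^1$.

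The main obstacle I anticipate is purely organisational rather than conceptual: keeping the graded shifts consistent across all six modules so that $f$, $g$, $f'$, $g'$ are simultaneously degree-zero $R$-module maps, and handling the boundary cases where one of $(c,b),(a,d),(a,c),(b,d)$ degenerates (for instance when $c=b-1$ so that $(c,b)$ is a shift of $R$, or when the configuration forces one arc to be a boundary arc). I would dispose of these by noting that the formulas for the maps as multiplication by monomials make sense uniformly, including when $k=0$, so no separate argument is needed. An alternative, cleaner route — which I would mention as a remark — is to avoid module computations entirely and instead transport the known triangles in $\overline{\cC}_M$ from \cite[Figures 1 and 2]{PY21} through the equivalence $\underline{\cC}_2\cong\overline{\cC}_M$ of Proposition \ref{P:PY}, and then lift the resulting triangle in $\underline{\cC}_2$ to a short exact sequence in the Frobenius category $\cC_2$; the only thing to check there is that the lift can be chosen with the stated middle term, which again follows from the rank/arc bookkeeping since projective summands can only contribute boundary arcs.
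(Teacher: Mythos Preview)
Your proposal is correct and is precisely the approach the paper takes: the paper's proof is literally ``By direct calculation'', and your sketch spells out exactly that calculation together with a clean non-splitness argument via Krull--Schmidt. The alternative route you mention (transporting triangles from \cite{PY21} through Proposition~\ref{P:PY}) is also sound, but the paper opts for the direct module-level verification.
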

\begin{proof}
By direct calculation.
\end{proof}

\begin{lemma} \label{lem:ses2}
Consider a crossing between an infinite arc $(-\infty,b)$ and a finite arc $(a,c)$ as in the following picture:
\begin{center}
\begin{tikzpicture}[scale=0.6]
\draw (-5,0) -- (5,0);
\draw (-2,0) -- (0,3);
\draw (2,0) -- (0,3);
\draw (0,0) to[looseness=1.25,out=70, in=110] (2,0);
\draw (-2,0) to[looseness=1.25,out=70, in=110] (0,0);
\draw[dashed,red] (-2,0) to[looseness=1.25,out=65, in=115] (2,0);
\draw[dashed,blue] (0,0) to (0,3);
\node at (-2,-0.5) {$a$};
\node at (0,-0.5) {$b$};
\node at (2,-0.5) {$c$};
\end{tikzpicture}    
\end{center}
Then we have non-split short exact sequences between the indecomposables given by
\begin{align*}
    0 \to (-\infty,b) \xrightarrow{f} (a,b) \oplus (-\infty,c) \xrightarrow{g} (a,c) \to 0\\
    0 \to (a,c) \xrightarrow{f'} (b,c) \oplus (-\infty,a) \xrightarrow{g'} (-\infty,b) \to 0.
\end{align*}
\end{lemma}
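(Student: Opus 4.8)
The plan is to write down the relevant maps explicitly, using the module descriptions from Proposition \ref{prop: classify objects} and the identification of arcs with modules from Section \ref{S:model}, and then verify exactness and non-splitness directly, just as in the proof of Lemma \ref{lem:ses1}. Recall that under the combinatorial model the infinite arc $(-\infty,b)$ corresponds to $\bC[y](-b)$ (more precisely to some shift $\bC[y](j)$ with $-j = b$), and the finite arc $(a,c)$ corresponds to the ideal $(x,y^{c-a})(1-c)$ up to the appropriate shift; similarly for $(a,b)$, $(b,c)$ and $(-\infty,a)$. So the first step is bookkeeping: fix the precise graded shifts so that all four modules in each sequence line up, using the dictionary $(x,y^k)(j) \leftrightarrow (-j-k,1-j)$ and $\bC[y](j) \leftrightarrow (-\infty,-j)$.

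Next I would construct the maps. For the first sequence, the map $f\colon (-\infty,b) \to (a,b) \oplus (-\infty,c)$ should be $(\iota, \pi)$ where $\iota$ realises $\bC[y]$ as a quotient of the ideal $(x,y^{b-a})$ (the surjection $(x,y^k) \twoheadrightarrow \bC[y](k-1)$ appearing already in the proof of Proposition \ref{P: our category}, suitably shifted), and $\pi$ is the natural surjection $\bC[y](j) \twoheadrightarrow \bC[y](j')$ coming from multiplication by a power of $y$ — here the key point is that $\bC[y]$ surjects onto $\bC[y](c-b)$ since $b < c$. The map $g\colon (a,b)\oplus(-\infty,c) \to (a,c)$ is then the difference of the two evident maps into $(x,y^{c-a})(1-c)$: one from the inclusion $(x,y^{b-a}) \hookrightarrow (x,y^{c-a})$ of ideals, the other lifting $\bC[y] \to (x,y^{c-a})(1-c)$ through its quotient presentation. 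Then one checks $\img f = \ker g$, injectivity of $f$, surjectivity of $g$, and non-splitness (which is forced by Proposition \ref{prop:exts}: $\Ext^1((a,c),(-\infty,b)) \ne 0$ exactly because $(-\infty,b)$ and $(a,c)$ cross, and the sequence is the nonzero class, so it cannot split). The second sequence is handled symmetrically, noting that $(-\infty,a)$ makes sense because $a$ is the left endpoint, and that $\Ext^1((-\infty,b),(a,c)) \ne 0$ again by the crossing.

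The main obstacle will be the degree bookkeeping together with checking that the composite $g\circ f$ really is zero on the nose (not merely up to a scalar) and that the cokernel of $f$ is genuinely the module $(a,c)$ and not some extension involving a projective summand $R(j)$. In the Frobenius category $\cC_2$, as opposed to the stable category, one must be careful that the short exact sequence does not secretly split off a copy of $R$; this is exactly the subtlety flagged in the paper's discussion of why one works in $\cC_2$ rather than $\underline{\cC}_2$. I would resolve this by computing ranks and graded Hilbert series of all terms (the generically free modules have rank $1$, $\bC[y]$ has rank $0$, so the rank count $0 = 1 + 0 - 1$ for the first sequence is automatic) and pinning down the cokernel by its graded dimension in each degree, which determines it among the classified indecomposables once we know it is indecomposable — and it is, being a single arc. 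All of this is routine linear algebra over $\bC[x,y]/(x^2)$, so the proof reduces to "by direct calculation," exactly as recorded.
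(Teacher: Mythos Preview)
Your overall approach matches the paper's, whose proof is simply ``By direct calculation.'' However, the explicit maps you propose for $f$ point the wrong way. The component $(-\infty,b) \to (a,b)$ must be an \emph{injection} $\bC[y](-b) \hookrightarrow (x,y^{b-a-1})(1-b)$ given by $1 \mapsto x$; this is the \emph{left} map in the short exact sequence from the proof of Proposition \ref{P: our category}, not the surjection $(x,y^k) \twoheadrightarrow \bC[y](k-1)$ you cite. Similarly, the component $(-\infty,b) \to (-\infty,c)$ is multiplication by $y^{c-b}$ on $\bC[y]$, which is injective, not a surjection as you wrote --- there are no nontrivial surjections between distinct shifts of $\bC[y]$. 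There is also an off-by-one in your dictionary: the arc $(a,c)$ corresponds to $(x,y^{c-a-1})(1-c)$, not $(x,y^{c-a})(1-c)$.

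With these corrections the computation goes through: $g$ is given on the first summand by $x \mapsto xy^{c-b}$, $y^{b-a-1} \mapsto y^{c-a-1}$, and on the second by $1 \mapsto -x$, and one checks $g \circ f = 0$ and exactness directly. Non-splitness then follows from Krull--Schmidt, since the indecomposable outer terms are not isomorphic to either middle summand; equivalently, it is forced by the one-dimensionality of the relevant $\Ext^1$ in Proposition \ref{prop:exts}. The second sequence is handled the same way.
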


\begin{proof}
    By direct calculation.
\end{proof}

\begin{lemma} \label{lem:ses3}
Consider two infinite arcs $(-\infty,a)$ and $(-\infty,b)$ as in the following picture:
\begin{center}
\begin{tikzpicture}[scale=0.6]
\draw (-5,0) -- (5,0);
\draw [dashed,red](-2,0) -- (0,3);
\draw [dashed,blue](2,0) -- (0,3);
\draw (-2,0) to[looseness=1.25,out=60, in=120] (2,0);
\node at (-2,-0.5) {$a$};
\node at (2,-0.5) {$b$};
\end{tikzpicture}    
\end{center}
Then we have a non-split short exact sequence between the indecomposables given by
\begin{align*}
    0 \to (-\infty,b) \xrightarrow{f} (a,b) \xrightarrow{g} (-\infty,a) \to 0.
\end{align*}
\end{lemma}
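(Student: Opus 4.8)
The plan is to prove the lemma by an explicit computation in $\cC_2$, using the module-theoretic dictionary of Section~\ref{S:model}. Under that dictionary the infinite arcs $(-\infty,a)$ and $(-\infty,b)$ are the modules $\C[y](-a)$ and $\C[y](-b)$, and — since $a<b$ forces $b-1-a\ge 0$ — the finite arc $(a,b)$ is the module $(x,y^{b-1-a})(1-b)$. So the task is to produce a non-split short exact sequence of graded $R$-modules
\[
	0 \to \C[y](-b) \xrightarrow{\ f\ } (x,y^{b-1-a})(1-b) \xrightarrow{\ g\ } \C[y](-a) \to 0.
\]

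First I would recall the fundamental short exact sequence underlying the proof of Proposition~\ref{P: our category}: for every $k\ge 0$ the submodule $xR\subseteq (x,y^k)$ is isomorphic to $\C[y](-1)$ via multiplication by $x$, with cokernel $(x,y^k)/xR\cong \C[y](k)$, giving the short exact sequence $0 \to \C[y](-1) \to (x,y^k) \to \C[y](k)\to 0$ in $\cC_2$. Setting $k=b-1-a$ and applying the grading shift $(1-b)$ produces exactly the displayed sequence, since $-1+(1-b)=-b$ and $(b-1-a)+(1-b)=-a$; one then only checks that $f$ (multiplication by $x$) is injective, that $g$ (reduction modulo $x$) is surjective, and that $\img f = xR(1-b) = \ker g$. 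The degenerate case $k=0$, where $(a,b)=(a,a+1)$ is the boundary arc $R(-a)$ and the sequence reads $0\to\C[y](-a-1)\to R(-a)\to\C[y](-a)\to 0$, is handled the same way, so all cases $a<b$ are covered.

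For non-splitness I would note that the middle term $(x,y^{b-1-a})(1-b)$ is indecomposable by Proposition~\ref{prop: classify objects}, hence cannot be isomorphic to the decomposable module $\C[y](-b)\oplus\C[y](-a)$; alternatively, Proposition~\ref{prop:exts} gives $\Ext^1((-\infty,a),(-\infty,b))\cong\C$, so the extension space is one-dimensional and the sequence just constructed represents its unique nonzero class. The only point that needs genuine care is bookkeeping of the grading shifts, together with the observation that $b-1-a\ge 0$ is precisely what keeps the middle term in the ``finite arc'' family. (One could instead deduce the statement from the triangulated equivalence $\underline{\cC}_2\cong\overline{\cC}_M$ and the corresponding triangle coming from \cite[Figure~2]{PY21}, but lifting that triangle to an honest short exact sequence in the Frobenius category $\cC_2$ is less transparent than the direct argument, so I would not take that route.)
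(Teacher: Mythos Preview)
Your proof is correct and is precisely the ``direct calculation'' the paper invokes: you identify the modules via the dictionary of Section~\ref{S:model}, exhibit the sequence $0\to xR\hookrightarrow (x,y^k)\twoheadrightarrow (x,y^k)/xR\to 0$ with the appropriate grading shift, and verify non-splitness via indecomposability of the middle term (or equivalently via Proposition~\ref{prop:exts}). Your bookkeeping of the shifts is accurate; in particular your cokernel $\C[y](k)$ is correct (the $\C[y](k-1)$ appearing in the proof of Proposition~\ref{P: our category} is an off-by-one slip, as one sees already for $k=0$).
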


\begin{proof}
    By direct calculation.
\end{proof}

\begin{remark}
By Proposition \ref{prop:exts}, the short exact sequences appearing in Lemmas \ref{lem:ses1}, \ref{lem:ses2} and \ref{lem:ses3} are, up to scalars, the only short exact sequences in $\cC_2$ with indecomposable end terms.
\end{remark}

Further, the graded shift on $\cC_2$ (and hence also the suspension on $\underline{\cC}_2$ by Remark \ref{R : suspension=gradingshift}) is easy to see in the combinatorial model:
\begin{enumerate}
    \item For a module $\C[y](j)$, the graded shift is $\C[y](j+1)$, so the shift rotates the arc $(-\infty, -j)$ to $(-\infty, -j-1)$ i.e.\ the arc is rotated one space anti-clockwise, with the point at $-\infty$ as a pivot.
    \item For a module $(x,y^i)(j)$, the graded shift is $(x,y^i)(j+1)$ so the shift rotates the arc $(-j-i,1-j)$ to $(-j-i-1,-j)$ i.e.\ the endpoints of the arc are each moved one space anti-clockwise.
\end{enumerate}

\begin{remark}\label{R:stable Hom} As a consequence, it is also easy to deduce the stable Hom spaces from the combinatorial model using Proposition \ref{prop:exts} and $\underline{\Hom}(X,Y) \cong \Ext^1(X,Y(-1))$.
\end{remark}

\subsection{Cluster tilting subcategories}

Each of the papers \cite{HJ-cat, F16, PY21} classified the cluster tilting subcategories of the relevant categories, giving these categories a cluster structure. In this section, we use the equivalences of Sections \ref{S:HJ} - \ref{S:PY} to consider $\cC_2$ and its subcategory $\cC_2^\mathrm{f}$. In all cases, however, we could have used the explicit nature of $\cC_2$ and the combinatorial model to compute the results directly.
\begin{dfn} \label{d:triangulationFrob}
Let $\cC$ be either a triangulated or Frobenius category. A full subcategory $\T$ of $\CC$ is called:
\begin{enumerate}
    \item \emph{rigid} if $\Ext^1_\CC(\T,\T)=0$;
    \item \emph{maximal rigid} if it is rigid and maximal with respect to this property i.e.\ if 
    \[
    \Ext^1_\CC(M,M)=0 \quad \text{and} \quad \Ext^1_\CC(T,M)=0=\Ext^1_\CC(M,T) 
    \]
    for all $T \in \T$, then $M \in \T$;
    \item \emph{cluster tilting} if it is functorially finite and 
    \[
    \{M \in \CC \mid \Ext^1_\CC(\T,M)=0\} = \T= \{M \in \CC \mid \Ext^1_\CC(M, \T)=0\}.
    \]
\end{enumerate}
\end{dfn}

Proposition \ref{prop:exts} makes it easy to determine the rigid subcategories in terms of certain sets of non-crossing arcs, and in each case, the cluster tilting subcategories are classified by certain \emph{triangulations}.

 For the subcategory $\cC_2^\mathrm{f}$, we have the following.
 \begin{theorem} \label{t: not complete ct}
     A subcategory of $\cC_2^\mathrm{f}$ is:
     \begin{enumerate}
         \item rigid if and only if its indecomposable objects are given by a set of non-crossing arcs in the $\infty$-gon;
         \item maximal rigid if and only if its indecomposable objects are given by a triangulation of the $\infty$-gon;
     \item a cluster tilting subcategory if and only if its indecomposable objects are given by a triangulation of the $\infty$-gon which is either locally finite, or contains a fountain at some $a \in \bZ$.
     \end{enumerate}
\end{theorem}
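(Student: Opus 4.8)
The plan is to transfer the three classifications along the equivalence $\underline{\cC}_2^\mathrm{f} \cong \mathrm{D^f}(S)$ of Corollary \ref{C: link to HJ}, using that $\mathrm{D^f}(S)$ is the Holm--J{\o}rgensen cluster category whose rigid, maximal rigid and cluster tilting subcategories are already classified in \cite{HJ-cat}. The only genuine subtlety is that the statement is about the \emph{Frobenius} category $\cC_2^\mathrm{f}$, not its stable category, so one must be careful that adding the projective-injective modules does not change rigidity, maximality or functorial finiteness. Concretely, I would first observe that $\cC_2^\mathrm{f}$ contains no boundary arcs $(a,a+1)$ (the projective-injectives $R(j)$ are generically free but correspond to boundary arcs, hence are not internal; more precisely, in $\cC_2^\mathrm{f}$ we only have the modules $(x,y^k)(j)$ with $k\geq 1$ as non-projective indecomposables, since $(x,y^0)(j)=R(j)$), so that a subcategory of $\cC_2^\mathrm{f}$ is rigid, maximal rigid or cluster tilting if and only if the corresponding subcategory of the stable category $\underline{\cC}_2^\mathrm{f}$ is, because $\Ext^1_{\cC_2^\mathrm{f}}(X,Y) = \Ext^1_{\underline{\cC}_2^\mathrm{f}}(X,Y)$ for all non-projective $X,Y$, and projectives contribute trivially to all three conditions.

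For part (1), rigidity is equivalent to $\Ext^1$ vanishing on the subcategory, which by Proposition \ref{prop:exts} (noting $\cC_2^\mathrm{f}$ contains only finite arcs, for which the $\Ext^1$ is nonzero exactly when the arcs cross) is equivalent to the set of indecomposables forming a set of pairwise non-crossing finite arcs. For part (2), maximal rigidity then says the non-crossing set cannot be enlarged, i.e.\ it is a maximal collection of pairwise non-crossing finite arcs — precisely a triangulation of the $\infty$-gon. Here I would also invoke \cite[Lemma 3.3]{HJ-cat} (quoted in the excerpt) that every triangulation of the $\infty$-gon is either locally finite or contains both a left and a right fountain, to have the combinatorial picture under control. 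For part (3), the cluster tilting condition $\{M : \Ext^1(\T,M)=0\} = \T = \{M : \Ext^1(M,\T)=0\}$ is exactly the Holm--J{\o}rgensen cluster tilting condition transported through the equivalence; by \cite[Theorem 4.4]{HJ-cat} (referenced in the excerpt) the cluster tilting subcategories of $\mathrm{D^f}(S)$ are classified by triangulations of the $\infty$-gon that are locally finite or contain a fountain, and the functorial finiteness requirement is part of that statement. It remains to check that functorial finiteness in $\cC_2^\mathrm{f}$ matches functorial finiteness in the stable category: this follows since the inclusion of projectives only adds finitely-generated-projective data and $\add R$-approximations always exist, so functorial finiteness is unaffected by passing to the stable category.

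The main obstacle I expect is the bookkeeping in part (3): one must verify that the "relative" $\Ext^1$-orthogonality condition defining cluster tilting genuinely coincides on the two sides of the Frobenius/stable correspondence, including the behaviour of the projective-injectives $R(j)$ — i.e.\ that $R(j) \in \{M : \Ext^1(\T,M)=0\}$ automatically and that $\T$ need not (and, being inside $\cC_2^\mathrm{f}$, does) contain them. Since $R(j)$ is projective-injective, $\Ext^1(\T, R(j)) = 0 = \Ext^1(R(j),\T)$ for free, so if $\T$ is cluster tilting in the stable sense the condition forces $R(j) \notin \T$ only if $R(j)$ is identified with a boundary arc not in $\T$; but here the subtlety dissolves because $R(j) \notin \cC_2^\mathrm{f}$ as an \emph{internal} object is not at issue — $R(j)$ simply does not arise as an indecomposable of $\cC_2^\mathrm{f}$ beyond the projective summands, so the triangulation classification is clean. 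Once this is pinned down, the three equivalences are immediate consequences of the corresponding statements in \cite{HJ-cat}, with the combinatorics of crossings supplied by Proposition \ref{prop:exts} and the dictionary of Section \ref{S:model}.
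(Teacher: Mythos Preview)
Your approach is essentially the same as the paper's: transfer the Holm--J{\o}rgensen classification along the equivalence of Corollary~\ref{C: link to HJ}, and handle the passage between the Frobenius category $\cC_2^\mathrm{f}$ and its stable category by accounting for the projective-injectives $R(j)$, which correspond to boundary arcs. The paper's proof is a single sentence invoking \cite[Theorems A and B]{HJ-cat} and noting that cluster tilting subcategories lift from the stable category to the Frobenius cover by adding all projective-injectives; your parts (1) and (2) via Proposition~\ref{prop:exts} are correct and just make this explicit.

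Your last paragraph, however, has the direction of the implication reversed. Since $R(j)$ is projective-injective, $\Ext^1(\cT,R(j)) = 0 = \Ext^1(R(j),\cT)$ automatically, and hence the cluster tilting condition $\cT = \{M : \Ext^1(\cT,M)=0\}$ forces $R(j) \in \cT$, not $R(j) \notin \cT$. This is precisely why, in the Frobenius setting, every cluster tilting (indeed every maximal rigid) subcategory necessarily contains all the $R(j)$, and why the correspondence with triangulations is clean: the boundary arcs are always present and one only needs to track the internal arcs, which correspond exactly to the indecomposables of the stable category. Also note that $R(j)=(x,y^0)(j)$ \emph{is} an indecomposable of $\cC_2^\mathrm{f}$, so your sentence ``$R(j)$ simply does not arise as an indecomposable of $\cC_2^\mathrm{f}$ beyond the projective summands'' is not quite right; rather, it arises and is forced into $\cT$. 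Once this is straightened out, your bookkeeping worry dissolves and the argument goes through exactly as in the paper.
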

\begin{proof}
Note that every cluster tilting subcategory in the stable category lifts to one in the Frobenius cover when we add all projective-injective objects. The result then follows directly from \cite[Theorems A and B]{HJ-cat} and the equivalence in Corollary \ref{C: link to HJ}, and the fact that projective-injective objects correspond to boundary arcs.
\end{proof}
Note that the only other possible triangulations of the $\infty$-gon, those containing a {\em split fountain} (i.e.\ a left fountain at $a$ and a right fountain at $b$ with $a<b$), fail to be functorially finite. When we consider the whole category $\cC_2$, there are further obstructions.
\begin{theorem} \label{t: complete ct}
     A subcategory $\T$ of $\cC_2$ is a cluster tilting subcategory if and only if its indecomposable objects are given by a triangulation of the $\infty$-gon containing a fountain at some $a \in \bZ$.
\end{theorem}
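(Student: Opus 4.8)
The plan is to compare cluster tilting in $\cC_2$ with cluster tilting in $\cC_2^\mathrm{f}$ (Theorem \ref{t: not complete ct}) and in the stable category $\underline{\cC}_2$, using the combinatorial description of $\Ext^1$ from Proposition \ref{prop:exts}. The crucial new phenomenon, already flagged in the introduction and after Proposition \ref{prop:exts}, is that $\Ext^1\big((-\infty,b),(-\infty,d)\big) \cong \C$ whenever $b<d$, so any rigid subcategory of $\cC_2$ contains at most one infinite arc. Combined with the list of five configurations of triangulations of the completed $\infty$-gon, this forces a rigid subcategory of $\cC_2$ whose arcs form a triangulation to be one of: (a) locally finite with no infinite arc; (b) a fountain at some $a\in\Z$ together with the single wrapping arc $(-\infty,a)$; (c) a ``half-infinite'' configuration with exactly one infinite arc and a one-sided fountain; or the split-fountain and all-infinite-arcs configurations, which contain at least two infinite arcs and hence are never rigid. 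So the candidates for cluster tilting subcategories of $\cC_2$ are already quite restricted.

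First I would establish necessity: if $\T$ is cluster tilting in $\cC_2$, then $\T$ is in particular maximal rigid, hence (after discarding the projective-injectives, which are the boundary arcs $R(j)$ and lie in every cluster tilting subcategory by the general theory of Frobenius categories) its non-projective indecomposables form a maximal set of non-crossing internal arcs, i.e.\ a triangulation. By the rigidity constraint above this triangulation has at most one infinite arc. I would then rule out the locally finite case and the ``one infinite arc with a one-sided fountain'' case by exhibiting an object $M$ with $\Ext^1(\T,M)=0=\Ext^1(M,\T)$ but $M\notin\T$: in the locally finite case, using Proposition \ref{prop:exts}, a sufficiently ``far out'' infinite arc $(-\infty,m)$ has no crossings with any arc of a locally finite triangulation and, being the unique infinite arc in sight, has no $\Ext^1$ with the (finite) arcs of $\T$ in either direction, so $\T$ fails the cluster tilting equation; similarly, a one-sided fountain at $a$ with wrapping arc $(-\infty,a)$ but no left fountain leaves room for infinite arcs $(-\infty,m)$ with $m<a$ that are compatible, contradicting maximality of the ``$=\T$'' condition. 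This pins necessity down to: $\T$ minus projectives is a triangulation containing a (two-sided) fountain at some $a\in\Z$, which then automatically contains the wrapping arc $(-\infty,a)$ by \cite[Lemma 1.10]{BG} and no other infinite arc by rigidity.

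For sufficiency, suppose the indecomposables of $\T$ are the boundary arcs together with a triangulation containing a fountain at $a$; I must check $\{M\mid \Ext^1(\T,M)=0\}=\T$ and the symmetric statement, plus functorial finiteness. Functorial finiteness I would deduce from the corresponding statement for $\cC_2^\mathrm{f}$ in Theorem \ref{t: not complete ct}(3) together with the fact that $\cC_2$ differs from $\cC_2^\mathrm{f}$ only by the modules $\C[y](j)$, i.e.\ the infinite arcs, and there is a short exact sequence $0\to\C[y](-1)\to(x,y^k)\to\C[y](k-1)\to 0$ (from the proof of Proposition \ref{P: our category}) plus the sequences of Lemmas \ref{lem:ses2}, \ref{lem:ses3} expressing any infinite arc through arcs of the fountain triangulation; these give the required approximations. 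For the Ext-orthogonality equation, take $M$ indecomposable with $\Ext^1(\T,M)=0$. If $M$ is finite, then $M$ is compatible with every finite arc of $\T$, and since the finite part of $\T$ (inside $\cC_2^\mathrm{f}$) contains a fountain hence is cluster tilting there by Theorem \ref{t: not complete ct}(3), we get $M\in\T$. If $M=(-\infty,m)$ is infinite, the vanishing $\Ext^1\big((-\infty,a),M\big)=0$ together with the second clause of Proposition \ref{prop:exts} forces $m\le a$, while $\Ext^1\big((-\infty,m),(-\infty,a)\big)=0$ is part of the hypothesis in the other direction and forces $a\le m$; hence $m=a$ and $M=(-\infty,a)\in\T$. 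The symmetric computation is identical by the near-symmetry of Proposition \ref{prop:exts} in this range.

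I expect the main obstacle to be the sufficiency direction's functorial finiteness and the precise bookkeeping of which infinite arcs $(-\infty,m)$ are Ext-orthogonal to a given fountain triangulation — one has to use Proposition \ref{prop:exts} carefully in \emph{both} arguments, since the infinite-arc term $\Ext^1((-\infty,b),(-\infty,d))\cong\C$ for $b<d$ is not symmetric and behaves differently from the crossing term. The cleanest route is probably to reduce everything to the already-cited classification \cite[Theorems A and B]{HJ-cat} for the finite part via Corollary \ref{C: link to HJ}, handle the unique infinite arc by hand as above, and invoke \cite[Theorem~1.12]{BG} to know that ``contains a fountain at $a$'' is exactly the configuration in the list that has a single infinite (wrapping) arc.
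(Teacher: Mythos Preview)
Your overall strategy is close to the paper's appendix argument (the main text simply invokes \cite[Theorem~4.4]{PY21} via Proposition~\ref{P:PY}), but there is a genuine gap in the necessity direction for the locally finite case. You claim that a ``sufficiently far out'' infinite arc $(-\infty,m)$ has no crossings with any arc of a locally finite triangulation. This is false: in a locally finite (leapfrog) triangulation of the $\infty$-gon, \emph{every} integer $m$ lies strictly inside some arc $(a,b)\in T$ (indeed inside infinitely many, cf.\ \cite[Lemma~3.7]{GG1}), so $(-\infty,m)$ crosses $(a,b)$ and $\Ext^1\big((a,b),(-\infty,m)\big)\neq 0$ by Proposition~\ref{prop:exts}. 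Consequently a locally finite $\T$ actually \emph{does} satisfy both Ext-orthogonality equalities in Definition~\ref{d:triangulationFrob}(3), and your proposed witness cannot exist. The real obstruction is functorial finiteness: the paper shows in Proposition~\ref{P: leapfrog not cluster tilting} that such $\T$ is not precovering, because any $\T$-precover of an infinite arc would require infinitely many summands. This is the missing ingredient in your plan.

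Two smaller points. First, your case~(c), a ``one-sided fountain with exactly one infinite arc,'' does not arise: by \cite[Lemma~1.11]{BG} (and its mirror), a unique infinite arc $(-\infty,a)$ in a triangulation of the completed $\infty$-gon forces both a left and a right fountain at $a$. The non-fountain maximal rigid case you should be ruling out is the \emph{split} fountain (left fountain at $a$, right fountain at $b>a$, plus exactly one of $(-\infty,a)$ or $(-\infty,b)$), which the appendix dispatches by observing that the \emph{other} infinite arc lies in one Ext-orthogonal but not in $\T$. Second, in your sufficiency check of $\{M\mid\Ext^1(\T,M)=0\}\subseteq\T$ for $M=(-\infty,m)$, you invoke $\Ext^1\big((-\infty,m),(-\infty,a)\big)=0$, but that vanishing belongs to the \emph{other} orthogonal; working from $\Ext^1(\T,M)=0$ alone, you should instead use the fountain arcs: if $m\neq a$ then some $(c,a)$ or $(a,d)$ in $\T$ crosses $(-\infty,m)$, forcing $m=a$ directly.
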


\begin{proof}
    This follows immediately from \cite[Theorem 4.4]{PY21}, and the fact that a cluster tilting subcategory in the stable category lifts to one in the Frobenius cover, when we add all projective-injective objects, which correspond to boundary arcs. 
    For the interested reader we provide a direct computation in the category $\cC_2$ in the appendix. 
\end{proof}

\section{Triangulations and mutations in the Grassmannian category $\cC_2$}

We have seen in Section \ref{S:combinatorial model} that the Grassmannian category $\cC_2$ can be approached via the completed $\infty$-gon. We now explore this combinatorics further, providing a categorical interpretation of triangulations of the completed $\infty$-gon and comparing their categorical and combinatorial mutations.

\subsection{Mutations of triangulations}

To describe mutations within the completed $\infty$-gon, we use the conventions from \cite{BG}, except that we identify the points $+\infty$ and $-\infty$, and just call it $-\infty$ to align with our conventions from Section \ref{S:combinatorial model}. 

\begin{dfn} Let $T$ be a triangulation of the completed $\infty$-gon. An arc $\gamma \in T$ is called \emph{mutable} if there exists $\gamma' \neq \gamma $ such that 
\begin{align*}
    T'= T \backslash \{\gamma\} \cup \{\gamma'\}
\end{align*}
is a triangulation.  We then call the triangulation $T'$ the {\em mutation of $T$ at $\gamma$} and denote it by $\mu_\gamma(T)$.
\end{dfn}

\begin{lemma}[{\cite[Proposition 2.8]{BG}}] \label{lem: mutable arcs}
An arc $\gamma \in T$ is not mutable if and only if it is a wrapping arc.
\end{lemma}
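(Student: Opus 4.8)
The statement to prove is Lemma~\ref{lem: mutable arcs}: an arc $\gamma \in T$ is not mutable if and only if it is a wrapping arc. This is cited as \cite[Proposition 2.8]{BG}, so strictly speaking the proof is elsewhere, but let me sketch how I would prove it directly.

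\textbf{The plan.} The plan is to prove both directions of the equivalence by analyzing the local combinatorial structure around $\gamma$ in the triangulation $T$. Recall that a wrapping arc is an infinite arc $(-\infty, n)$ such that $T$ contains a left or right fountain at $n$. The key principle throughout is that in a triangulation of the (completed) $\infty$-gon, each arc $\gamma$ bounds ``regions'' on either side, and $\gamma$ is mutable precisely when these two regions together form a ``quadrilateral'' in the generalized sense, so that $\gamma$ can be replaced by the opposite diagonal.

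\textbf{Step 1: Mutability of finite arcs.} First I would show every finite arc $\gamma = (a,b)$ is mutable. Since $T$ is a triangulation (maximal non-crossing), the arc $(a,b)$ together with the arcs of $T$ not crossing it cuts the disc into pieces; the two pieces immediately adjacent to $\gamma$ are bounded, and because the marked points $a, b$ are interior (not $-\infty$) one checks there exist marked points $c$ (on one side) and $d$ (on the other side) with $c, d$ chosen as the ``first'' points so that $(a,c), (c,b)$ and $(a,d),(d,b)$ all lie in $T$. Then $(c,d)$ crosses only $(a,b)$ among arcs of $T$, so $T \setminus \{(a,b)\} \cup \{(c,d)\}$ is again a triangulation. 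Hence $\gamma$ is mutable. The subtlety here is handling the case where one of the adjacent regions is unbounded (an ``infinite'' region near the accumulation point), but even then one uses the convergence/fountain structure from \cite[Lemmas 1.10, 1.11]{BG} to locate a valid $c$ or $d$, possibly equal to $-\infty$.

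\textbf{Step 2: Non-wrapping infinite arcs are mutable.} Next, suppose $\gamma = (-\infty, n)$ is infinite but \emph{not} wrapping, i.e.\ there is no left or right fountain at $n$ in $T$. By \cite[Lemma 1.11]{BG} (and its analogue), since $(-\infty,n) \in T$ but there is no left fountain at $n$, there must be some $m < n$ with $(-\infty, m) \in T$; similarly some $p > n$ with $(-\infty, p) \in T$ — or $n$ is an endpoint of only finitely many arcs on each side, terminating at a finite ``innermost'' arc. I would argue the region of $T$ on the $-\infty$ side of $\gamma$ is a triangle with vertices $-\infty$, $m$, $n$ (or similar), and on the other side a triangle with vertices $-\infty$ (wrapping around), $n$, $p$ — so that flipping $\gamma$ replaces it with the finite arc $(m, p)$ (or the appropriate finite arc joining the two neighbours). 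One verifies $(m,p)$ crosses no other arc of $T$, giving a valid mutation.

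\textbf{Step 3: Wrapping arcs are not mutable.} Finally, suppose $\gamma = (-\infty, n)$ is a wrapping arc, say $T$ has a right fountain $\{(n, b_i)\}_{i \in \bN}$ at $n$ (the left case is symmetric). The arcs $(n, b_i)$ all have $n$ as an endpoint and accumulate, so on that side $\gamma$ is not adjacent to a single triangle but to an infinite fan of triangles converging to $-\infty$. Any $\gamma' \neq \gamma$ with $T \setminus \{\gamma\} \cup \{\gamma'\}$ a triangulation would have to be a non-crossing arc filling the ``slot'' vacated by $\gamma$; but the only arcs not crossing any $(n,b_i)$ and fitting in that slot are $\gamma$ itself (an arc from $-\infty$ to $n$), and one checks no finite arc or other infinite arc works: a finite arc would cross some $(n, b_i)$, and another infinite arc $(-\infty, n')$ with $n' \neq n$ would either cross some $(n,b_i)$ or leave $T \setminus \{\gamma\} \cup \{\gamma'\}$ non-maximal. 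Hence $\gamma$ is not mutable.

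\textbf{Main obstacle.} The main obstacle is the careful case analysis in Step 3 — precisely identifying why the fountain structure rigidly pins down the wrapping arc, and ruling out exotic replacements $\gamma'$. This requires the structural results \cite[Lemmas 1.10, 1.11, Theorem 1.12]{BG} on what triangulations of the completed $\infty$-gon look like near $-\infty$; with those in hand, the argument is a finite check against the list of five configurations. Since the lemma is quoted verbatim from \cite[Proposition 2.8]{BG}, in the paper itself I would simply cite it and refer the reader there for the detailed verification.
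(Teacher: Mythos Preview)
Your proposal is correct, and indeed more than the paper itself provides: the paper gives no proof of this lemma, merely citing \cite[Proposition 2.8]{BG}. Your closing remark recognises this, and your sketch of the three cases (finite arcs, non-wrapping infinite arcs, wrapping arcs) is the natural direct argument and matches the strategy in \cite{BG}.

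A small imprecision worth tightening in Step~2: when $(-\infty,n)$ is non-wrapping, the replacement arc is $(m,p)$ where $m$ is the \emph{largest} $m'<n$ with $(-\infty,m')\in T$ and $p$ is the \emph{smallest} $p'>n$ with $(-\infty,p')\in T$; these exist by \cite[Lemma 1.11]{BG} and its mirror, and the two triangles adjacent to $\gamma$ are $\{-\infty,m,n\}$ and $\{-\infty,n,p\}$, forming a quadrilateral with the single vertex $-\infty$ and other diagonal $(m,p)$. Your phrasing ``or the appropriate finite arc joining the two neighbours'' covers this, but pinning it down removes any ambiguity. In Step~3 you should also note that when there is a one-sided fountain at $n$ (say right only), the obstruction still works because the left-hand neighbour triangle has $-\infty$ as its third vertex, so any candidate replacement is either an infinite arc $(-\infty,m)$ with $m\neq n$ (which crosses some fountain arc or some other arc of $T$) or a finite arc passing over $n$ (which crosses some $(n,b_i)$ for $b_i$ large).
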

To summarise, there are two types of mutation:
\begin{center}
\begin{tikzpicture}
\node at (-3.5,0) {\begin{tikzpicture}[scale=0.7]
\draw (-5,0) -- (5,0);
\draw (1,0) arc (0:180:1);
\draw (3,0) arc (0:180:1);
\draw (-1,0) arc (0:180:1);
\draw (3,0) arc (0:180:3);
\draw[dashed,red] (3,0) arc (0:180:2);
\draw[dashed,blue] (1,0) arc (0:180:2);
\node at (-3,-0.5) {$a$};
\node at (-1,-0.5) {$b$};
\node at (1,-0.5) {$c$};
\node at (3,-0.5) {$d$};
\end{tikzpicture}};
\node at (3.5,0) {\begin{tikzpicture}[scale=0.7]
\draw (-5,0) -- (5,0);
\draw (-2,0) -- (0,3);
\draw (2,0) -- (0,3);
\draw (0,0) to[looseness=1.25,out=70, in=110] (2,0);
\draw (-2,0) to[looseness=1.25,out=70, in=110] (0,0);
\draw[dashed,red] (-2,0) to[looseness=1.25,out=65, in=115] (2,0);
\draw[dashed,blue] (0,0) to (0,3);
\node at (-2,-0.5) {$a$};
\node at (0,-0.5) {$b$};
\node at (2,-0.5) {$c$};
\end{tikzpicture}};
\end{tikzpicture}
\end{center}

In particular, any mutable arc in a triangulation must belong to one of these two configurations in the triangulation, as either of the dotted arcs. 

Consider now the possible exchange graphs of triangulations of the completed $\infty$-gon with vertices given by triangulations, and edges by mutations. Clearly, if we only consider finitely many mutations, then the exchange graph is not connected. In fact, it has infinitely many connected components. In order to connect the exchange graph, we need to consider infinite sequences of mutations. Indeed, it turns out that we obtain connectedness using a process called {\em transfinite mutations}, see \cite[Definition 6.1]{BG}.

\begin{theorem}[{\cite[Theorem 6.9]{BG}}] \label{thm: connect}
    The exchange graph of triangulations of the completed $\infty$-gon is connected under transfinite mutations.
\end{theorem}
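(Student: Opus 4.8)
The plan is to fix a single reference triangulation and show that every triangulation of the completed $\infty$-gon can be connected to it by a transfinite sequence of mutations in the sense of \cite[Definition 6.1]{BG}; since (in \cite{BG}'s framework) transfinite-mutation reachability is an equivalence relation, it then suffices to connect an arbitrary triangulation $T$ to one fixed choice. A convenient reference is the \emph{fan} $T_\infty = \{(-\infty,n) \mid n \in \bZ\}$, one of the five configurations listed above. Thus the whole argument reduces to: given any triangulation $T$, produce an ordinal-indexed sequence of ordinary mutations whose limit is $T_\infty$.

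First I would treat the case where $T$ already contains an infinite arc, say $(-\infty,n) \in T$. By \cite[Lemma 1.11]{BG} either $T$ has a left fountain at $n$ or there is $m < n$ with $(-\infty,m) \in T$, and symmetrically on the right; iterating, $T$ either contains infinitely many infinite arcs in a given direction or contains a fountain, which by \cite[Lemma 1.10]{BG} forces a wrapping arc. In every subcase the completed $\infty$-gon is thereby cut, by the infinite arcs of $T$, into finite pockets each carrying a triangulation of a finite polygon, together with possibly one wrapping arc. I would then perform, as a single transfinite sequence, first the ordinary diagonal flips (the first mutation type above) that push every finite arc of each pocket out towards the boundary, and then repeated applications of the second mutation type $(a,c) \leftrightarrow (-\infty,b)$ to insert the missing arcs $(-\infty,m)$ one at a time; organised so that each target arc of $T_\infty$ is eventually created and never again disturbed, the limit of this sequence is $T_\infty$.

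If $T$ contains no infinite arc, it is locally finite, and here the passage to the completed $\infty$-gon (and to transfinite mutations) is essential: a single flip of a locally finite triangulation again yields a locally finite triangulation, yet a transfinite sequence of ordinary flips need not. I would fix a vertex $a \in \bZ$ and carry out a transfinite sequence of diagonal flips that successively "combs" the arcs of $T$ into a fan at $a$, so that at the appropriate limit ordinal both a left fountain and a right fountain at $a$ have appeared. By \cite[Lemma 1.10]{BG} the limiting triangulation then contains the wrapping arc $(-\infty,a)$, which returns us to the first case.

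The main obstacle is making this transfinite induction rigorous rather than heuristic: one must verify that the ordinal-indexed chains of ordinary mutations are well defined, i.e.\ that the limit of a mutation-compatible chain of triangulations is again a triangulation, and that these limits genuinely converge to the intended triangulations and not to some neighbouring configuration. Concretely this means choosing the order of the (transfinitely many) flips with care and attaching to each arc of the target triangulation an ordinal stage after which it is permanently present; the bookkeeping of these stabilisation stages, rather than any single mutation move, is the crux of the proof.
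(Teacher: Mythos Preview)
The paper does not contain a proof of this theorem: it is quoted verbatim as \cite[Theorem~6.9]{BG} and used as a black box to deduce Corollary~\ref{c: connected}. There is therefore nothing in the paper to compare your argument against; any assessment would have to be made against the original proof in \cite{BG}.

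That said, your outline is broadly the right shape for such an argument, but two points deserve caution. First, you assert that transfinite-mutation reachability is an equivalence relation ``in \cite{BG}'s framework''; symmetry is not automatic for transfinite sequences of flips (the inverse of an $\omega$-indexed sequence of mutations is not obviously again a transfinite mutation in the required sense), so you should check carefully how \cite{BG} sets up the exchange graph before relying on this. Second, in the locally finite case you say that at the limit ordinal the wrapping arc $(-\infty,a)$ ``appears'' by \cite[Lemma~1.10]{BG}; but that lemma concerns triangulations, whereas a limit of triangulations along a transfinite mutation sequence need not a priori be a triangulation, and in particular the infinite arc is never produced by any single flip along the way. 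You correctly flag this bookkeeping as the crux, but it is genuinely the heart of the matter and would need to be carried out, not merely acknowledged.
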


\subsection{Categorifying triangulations}

To make use of transfinite mutations, and in particular Theorem \ref{thm: connect}, we need to understand which subcategories of $\cC_2$ correspond to triangulations of the completed $\infty$-gon, and how to mutate them. However, Proposition \ref{prop:exts} shows that any triangulation with more than one infinite arc is not rigid and so a weaker notion is needed. For this we use maximal almost rigid subcategories of $\cC_2$, the definition of which builds on the definition of maximal almost rigid modules by Barnard, Gunawan, Meehan and Schiffler in \cite{BGMS}.

\begin{dfn} \label{dfn: mar} \begin{enumerate}[leftmargin=0cm,itemindent=.6cm,labelwidth=\itemindent,labelsep=0.1cm,align=left, itemsep=0.2cm]
    \item Two indecomposable modules $M$ and $N$ in $\cC_2$ are called \emph{almost compatible} if they have no non-split extensions, or if all extensions between them have indecomposable middle terms.
    \item A subcategory $\mathcal{A} \subset \cC_2$ is {\em almost rigid} if any two indecomposable modules $M$ and $N$ in $\mathcal{A}$ are almost compatible.
    \item A subcategory $\mathcal{A}$ is \emph{maximal almost rigid} if it is almost rigid and if for every module $M$ not in $\mathcal{A}$, the subcategory $\mathrm{add}(\mathcal{A} \cup M)$ is not almost rigid.
\end{enumerate}
\end{dfn}

\begin{theorem}\label{thm: cat of triang}
    A subcategory $\cA \subset \cC_2$ is maximal almost rigid if and only if its indecomposable objects correspond to a triangulation of the completed $\infty$-gon.
\end{theorem}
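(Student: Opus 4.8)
The plan is to prove both implications by translating the almost rigidity condition into the combinatorial language of arcs, using Proposition~\ref{prop:exts} together with Lemmas~\ref{lem:ses1}, \ref{lem:ses2} and \ref{lem:ses3}. The first observation is that for two distinct indecomposables $X = (a,b)$ and $Y = (c,d)$, the pair is \emph{not} almost compatible precisely when there is a short exact sequence between them (in either direction) whose middle term is decomposable. Reading off Lemmas~\ref{lem:ses1} and \ref{lem:ses2}, the sequences with decomposable middle term are exactly those arising from a genuine \emph{crossing} of $X$ and $Y$ (two crossing finite arcs, or a crossing of a finite arc and an infinite arc). On the other hand, by Lemma~\ref{lem:ses3}, two distinct infinite arcs $(-\infty,a)$ and $(-\infty,b)$ always have a nonzero $\Ext^1$ (by Proposition~\ref{prop:exts}), but every such extension has \emph{indecomposable} middle term $(a,b)$; hence two infinite arcs are always almost compatible even though they are never rigid. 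Putting these together, I would first establish the key dictionary:
\begin{center}
$X$ and $Y$ are almost compatible $\iff$ the corresponding arcs do not cross.
\end{center}
Here one must also check the ``boundary'' edge cases where one of the arcs is a projective-injective $R(j)$, i.e. a boundary arc $(a,a+1)$: such an arc crosses nothing and has no nonsplit self-extensions or extensions with anything (again by Proposition~\ref{prop:exts}), so it is almost compatible with every indecomposable, consistent with the fact that boundary arcs lie in every triangulation.

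\textbf{From maximal almost rigid to triangulation.} Suppose $\cA$ is maximal almost rigid. By the dictionary, the set $S$ of arcs corresponding to $\indobj(\cA)$ is a set of pairwise non-crossing arcs. It remains to see $S$ is \emph{maximal} among such sets, i.e. a triangulation. If some arc $\gamma \notin S$ did not cross any arc of $S$, then by the dictionary the corresponding indecomposable $X_\gamma$ would be almost compatible with every indecomposable of $\cA$, so $\mathrm{add}(\cA \cup X_\gamma)$ would still be almost rigid, contradicting maximality of $\cA$. Hence $S$ is a maximal non-crossing set of arcs, i.e. a triangulation of the completed $\infty$-gon.

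\textbf{From triangulation to maximal almost rigid.} Conversely, let $T$ be a triangulation of the completed $\infty$-gon and let $\cA$ be the additive subcategory generated by the indecomposables corresponding to arcs of $T$. Since $T$ consists of pairwise non-crossing arcs, the dictionary shows any two indecomposables of $\cA$ are almost compatible, so $\cA$ is almost rigid. For maximality: given any indecomposable $M \notin \cA$, its arc $\gamma$ is not in $T$, so by maximality of $T$ it must cross some arc $\delta \in T$; by the dictionary $M$ is then not almost compatible with the indecomposable corresponding to $\delta$, so $\mathrm{add}(\cA \cup M)$ is not almost rigid. Hence $\cA$ is maximal almost rigid.

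\textbf{Main obstacle.} The routine part is the two implications once the dictionary is in hand; the crux is establishing the dictionary carefully, and in particular pinning down \emph{all} the short exact sequences with decomposable middle terms. I expect the main subtlety to be the asymmetry and the infinite-arc cases: one must verify that in the situations of Proposition~\ref{prop:exts} where $\Ext^1 \neq 0$ but the arcs do not ``cross'' in the formal sense (namely two infinite arcs $(-\infty,b)$ and $(-\infty,d)$ with $b<d$), the unique nonsplit extension really does have indecomposable middle term — this is exactly Lemma~\ref{lem:ses3} — and conversely that whenever the arcs do cross, \emph{some} extension (in one of the two directions) has a decomposable middle term, which is the content of Lemmas~\ref{lem:ses1} and \ref{lem:ses2} together with the Remark stating these are the only nonsplit sequences with indecomposable end terms. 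Once one knows the complete list of such sequences, decomposability of the middle term is read off directly, and the proof goes through.
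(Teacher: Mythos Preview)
Your proposal is correct and follows essentially the same approach as the paper's proof: both establish the dictionary ``almost compatible $\iff$ non-crossing'' via Proposition~\ref{prop:exts} and Lemmas~\ref{lem:ses1}--\ref{lem:ses3}, and then deduce the result by matching maximality on both sides. Your write-up is slightly more detailed in that it treats the boundary-arc case explicitly and separates the two implications, but the mathematical content is identical.
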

\begin{proof}

Consider two indecomposables $M, N \in \cC_2$. If their arcs cross, then the configuration must be the one shown in either Lemma \ref{lem:ses1} or Lemma \ref{lem:ses2}. In each case, the lemma in question shows that there are extensions between them with decomposable middle terms. In other words, no two crossing arcs can correspond to almost compatible modules. 

If $M,N$ do not cross, Proposition \ref{prop:exts} shows that either there are no extensions between them, or they both correspond to infinite arcs and have a one-dimensional extension group in one direction. In the latter case, Lemma \ref{lem:ses3} shows that the only non-split extension has an indecomposable middle term. In other words, any two non-crossing arcs are almost compatible.

This shows two indecomposable modules $M$ and $N$ are almost compatible if and only if their corresponding arcs are non-crossing. The result follows immediately as triangulations are maximal sets of pairwise non-crossing arcs, and maximal almost rigid categories are maximal sets of pairwise almost compatible modules.
\end{proof}

\begin{remark}
   For a subcategory, rigid implies almost rigid, however maximal rigid does not in general imply maximal almost rigid, as the following example illustrates.
   
   Consider the subcategory with indecomposable objects given by arcs in the following picture. 
 \[\begin{tikzpicture}[scale=0.6]
\draw (-6,0) -- (6,0);
\draw (0,3) -- (-1,0);
\draw (-1,0) to[looseness=1.25,out=70, in=110] (1,0);
\draw (-1,0) arc (0:180:1);
\draw (-1,0) arc (0:180:1.5);
\draw (-1,0) arc (0:180:0.5);
\draw (3,0) arc (0:180:1);
\draw (4,0) arc (0:180:1.5);
\draw (2,0) arc (0:180:0.5);
\node at (5,0.75) {$\cdots$};
\node at (-5,0.75) {$\cdots$};
\node at (0,3.3) {$\scriptstyle -\infty$};
\end{tikzpicture}\]
   
   This subcategory is maximal rigid, but not maximal almost rigid: We could add the wrapping arc connecting the source of the right fountain with $-\infty$, which is almost compatible with all the depicted arcs.
\end{remark}

\subsection{Mutation}

We are now going to define mutation of almost rigid subcategories in analogy to the mutation in triangulated categories of Iyama and Yoshino \cite{IYmutation}.

\begin{dfn} \label{mutationdefinition}
    Let $\cA$ be an almost rigid subcategory of an exact category $\cC$. We call an indecomposable object $X$ of $\cA$ {\em mutable} if there exists both a left $\add(\cA \setminus X)$-approximation and a right $\add(\cA \setminus X)$-approximation of $X$. 
    
    In that case, we define
    \begin{align*}
        \mu^-_X(\cA) = \add \{Z \in \cC  \mid & \ \text{there exists an exact sequence } 0 \to A' \xrightarrow{f} A'' \to Z \to 0 \\ 
        & \text{ such that $A' \in \cA$ and $f$ is a left $\add(\cA \setminus X)$-approximation of $A'$} \}
    \end{align*}
and call this the {\em left mutation} of $\cA$ at $X$. Dually we define
    \begin{align*}
        \mu^+_X(\cA) = \add \{Z \in \cC  \mid & \ \text{there exists an exact sequence } 0 \to Z \xrightarrow{} A'' \xrightarrow{g} A' \to 0 \\ 
        & \text{ such that $A' \in \cA$ and $g$ is a right $\add(\cA \setminus X)$-approximation of $A'$} \}
    \end{align*}
and call this the {\em right mutation} of $\cA$ at $X$.

We call the short exact sequences appearing in the definition of $\mu^-$ and $\mu^+$ {\em exchange sequences}. 
\end{dfn}
If $X$ is mutable, left mutation corresponds to simply replacing the indecomposable $X$ with the indecomposable $Z$ such that there is an exchange sequence
    \begin{align*}
        0 \to X \to A' \to Z \to 0
    \end{align*}
and similarly for right mutation.
\begin{remark} \begin{enumerate}[leftmargin=0cm,itemindent=.6cm,labelwidth=\itemindent,labelsep=0.1cm,align=left, itemsep=0.2cm]
    \item The subcategory $\add(\cA \setminus X)$ is always almost rigid, but not in general rigid. As such, our definition extends the framework of mutation defined in \cite{IYmutation}.
    \item Furthermore, the short exact sequences in our definition of mutation almost mirror the exchange sequences for (weak) cluster structures as introduced by Buan, Iyama, Reiten and Scott in \cite{BIRS}. However, we do not have the clear cut distinction between coefficients and cluster variables: We have indecomposable objects that can show up as a mutable indecomposable in one almost rigid subcategory, but as a non-mutable indecomposable in another almost rigid subcategory, see Example \ref{Ex: mutable arcs}. In the language of \cite{BIRS} this would correspond to the indecomposable object in question to be a coefficient in one cluster, and a cluster variable in another.
    \item We will see in Example \ref{Ex: why both sides} why we insist both a left and right $\add(\cA \setminus X)$-approximation of $X$ exists. Indeed, there we consider an indecomposable $X$ with only a left $\add(\cA \setminus X)$-approximation and note that neither $\mu^-_X(\cA)$ or $\mu^+_X(\cA)$ are maximal almost rigid.
    \end{enumerate}
    \end{remark}

We now describe the mutable indecomposable objects in the almost rigid subcategories of $\cC_2$ in terms of the combinatorial model.

\begin{theorem} \label{t: mutation}
    Let $\cA$ be a maximal almost rigid subcategory of $\cC_2$ corresponding to a triangulation $T$ of the completed $\infty$-gon. An indecomposable object $X$ of $\cA$ is mutable if and only if the corresponding arc $\gamma$ in $T$ is mutable. Furthermore, the left and right mutation of $\cA$ at $X$ coincide, and correspond to the mutation $\mu_\gamma(T)$ of $T$ at $\gamma$.
\end{theorem}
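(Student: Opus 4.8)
The plan is to translate the combinatorial statement of Lemma \ref{lem: mutable arcs} and the two mutation pictures into statements about approximations in $\cC_2$, using the short exact sequences recorded in Lemmas \ref{lem:ses1}--\ref{lem:ses3} and the $\Ext^1$-computation of Proposition \ref{prop:exts}. The key observation is that for a maximal almost rigid $\cA$ corresponding to a triangulation $T$, the exchange sequences defining $\mu^\pm_X(\cA)$ are forced: an exchange sequence $0 \to X \to A' \to Z \to 0$ with indecomposable end terms must be, up to scalars, one of the sequences in those three lemmas (by the Remark following Lemma \ref{lem:ses3}), and the middle term $A'$ must decompose into objects of $\cA$. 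So the combinatorial configurations in the two mutation pictures are exactly the ones that can arise, and the arc $\gamma'$ obtained by flipping $\gamma$ in $T$ is precisely the $Z$ appearing there.

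\emph{First} I would handle non-mutability: if $\gamma$ is a wrapping arc, say $\gamma = (-\infty, n)$ with a left fountain at $n$ in $T$, I would show $X = \bC[y](-n)$ has no left (equivalently, by a symmetric argument, no right) $\add(\cA\setminus X)$-approximation. The point is that any morphism from $X$ to an object of $\cA\setminus X$ that one might hope is a left approximation fails to detect the infinitely many arcs of the fountain; more precisely, using Proposition \ref{prop:exts} and Lemma \ref{lem:ses3}, for each finite arc $(b_i, n)$ of the left fountain there is a non-split extension $0 \to X \to (b_i, n) \to (-\infty, b_i) \to 0$, and these cannot all factor through a single approximation since $(-\infty,b_i)$ need not lie in $\cA$ and the maps do not stabilise. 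I would argue that a left approximation would have to "absorb" all these, contradicting that $\cA\setminus X$ together with $X$ triangulates the $\infty$-gon with $\gamma$ the unique wrapping arc at $n$. (This is the one place where the local structure of the five triangulation configurations from the Baur--Gratz classification, cited after the pictures in Section \ref{S:model}, is genuinely used.)

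\emph{Second}, for a mutable arc $\gamma$, I would show the flip $\gamma'$ exists and that $X$ is mutable with $\mu^-_X(\cA) = \mu^+_X(\cA) = \cA'$ where $\cA'$ corresponds to $\mu_\gamma(T)$. By Lemma \ref{lem: mutable arcs}, $\gamma$ sits inside one of the two configurations in the mutation picture as one of the dashed arcs; these are exactly the configurations of Lemmas \ref{lem:ses1} and \ref{lem:ses2}. In the first configuration, the relevant sequence $0 \to (a,b) \xrightarrow{f} (c,b)\oplus(a,d) \xrightarrow{g} (c,d) \to 0$ has middle term whose summands $(c,b)$ and $(a,d)$ are arcs of $T$ (they are the sides of the two triangles on either side of $\gamma$), and I would check $f$ is a left $\add(\cA\setminus X)$-approximation: any map $X \to A$ with $A \in \cA\setminus X$ factors through $f$ because $\cA$ is almost rigid, so $\Ext^1(\, \cdot\, ,X)$-obstructions vanish against all of $\cA\setminus X$ — concretely, $\operatorname{coker} f = (c,d) = \gamma'$ does not cross any arc of $T\setminus\{\gamma\}$, and a map not factoring through $f$ would produce, via the pushout, a non-split self-extension or a decomposable-middle extension inside $\cA$, contradicting almost rigidity. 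Then $\mu^-_X(\cA)$ contains $Z = (c,d)$; maximality of $\mu^-_X(\cA)$ as an almost rigid subcategory (and the classification Theorem \ref{thm: cat of triang}) forces $\mu^-_X(\cA) = \add((\cA\setminus X)\cup\{Z\})$, which is exactly $\cA'$. The dual computation using $g'$ in Lemma \ref{lem:ses1} (resp.\ Lemma \ref{lem:ses2}) gives the same $\cA'$ for $\mu^+_X(\cA)$, so left and right mutation coincide. The second configuration, involving an infinite arc and the sequences of Lemma \ref{lem:ses2}, is handled identically.

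\emph{The main obstacle} I anticipate is the non-mutability direction for wrapping arcs: verifying that \emph{no} left approximation exists requires ruling out approximations whose source is a large (possibly infinite) direct sum in $\add(\cA\setminus X)$, not just single indecomposables, and showing that the defining limit/colimit behaviour of a fountain genuinely obstructs functorial finiteness at $X$. I would reduce this to a cofinality argument: a left approximation $X \to A$ would give, for all but finitely many arcs in the fountain, a factorisation that is incompatible with the strictly monotone family $\{(b_i,n)\}$, mirroring the way split fountains fail to be functorially finite in Theorem \ref{t: not complete ct}. Everything else is a finite, local check on the two triangle-configurations and is "by direct calculation" in the spirit of Lemmas \ref{lem:ses1}--\ref{lem:ses3}.
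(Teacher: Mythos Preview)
Your treatment of the mutable case is essentially the paper's argument: the exchange sequences from Lemmas \ref{lem:ses1} and \ref{lem:ses2} are the correct candidates, and the paper verifies that $f$ (resp.\ $g'$) is a left (resp.\ right) $\add(\cA\setminus X)$-approximation by precisely the observation you make, namely that the cokernel $\gamma'$ crosses no arc of $T\setminus\{\gamma\}$, so $\Ext^1(\gamma',-)$ vanishes on $\cA\setminus X$ by Proposition \ref{prop:exts}. One small point: you do not need to invoke maximality or Theorem \ref{thm: cat of triang} to identify $\mu^-_X(\cA)$; once the approximation is established, the description $\mu^-_X(\cA)=\add((\cA\setminus X)\cup\{Z\})$ follows directly from Definition \ref{mutationdefinition}, since for any $A'\in\cA\setminus X$ the identity is already a left $\add(\cA\setminus X)$-approximation.

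For the non-mutable case your sketch has a genuine gap and a small error. The error: it is \emph{not} true that a wrapping arc lacks both approximations. For a left-fountain wrapping arc the paper shows there is no \emph{left} $\add(\cA\setminus X)$-approximation, while a right approximation may well exist (Example \ref{Ex: why both sides} exhibits exactly this). The symmetric statement is that a right-fountain wrapping arc lacks a \emph{right} approximation; this is enough, since mutability requires both. The gap: your argument via the extensions of Lemma \ref{lem:ses3} does produce nonzero maps $X=(-\infty,n)\to(m,n)$ for each fountain arc $(m,n)$, which is the right starting point, but the claim that these cannot simultaneously factor through a finite object of $\add(\cA\setminus X)$ requires a concrete factorisation analysis you do not supply. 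The paper carries this out using the explicit $\Hom$-computations of Appendix \ref{S:Hom-calculations} (Lemmas \ref{L: infinite to infinite}, \ref{L:infinite to finite}, \ref{L:finite to finite}): any factorisation of $X\to(m,n)$ through an indecomposable $(s,t)\in T\setminus\{\gamma\}$ forces $(s,t)$ to be finite with $t=n$ and $s\le m$, so a left approximation would need summands $(s,n)$ with $s$ arbitrarily negative, which is impossible in a finite direct sum. Your ``cofinality'' intuition is correct, but it cannot be made rigorous without these Hom-level constraints (almost rigidity and $\Ext^1$ alone do not suffice, since the obstruction lives in ordinary morphisms, not extensions).
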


\begin{proof}
Suppose $\gamma$ is a mutable arc in some triangulation $T$, and $X$ is the corresponding object in the corresponding maximal almost rigid subcategory $\cA$. Since $\gamma$ is mutable, $T$ must contain a configuration such as in Lemma \ref{lem:ses1} or \ref{lem:ses2}, where $\gamma$ is one of the dotted arcs. In particular, suppose $\gamma=(a,b)$ as in Lemma \ref{lem:ses1} so that $\mu_\gamma(T)=T \backslash (a,b) \cup (c,d)$, and consider the two exact sequences 
\begin{align*}
    0 \to (a,b) \xrightarrow{f} (c,b) \oplus (a,d) \xrightarrow{g} (c,d) \to 0\\
    0 \to (c,d) \xrightarrow{f'} (a,c) \oplus (b,d) \xrightarrow{g'} (a,b) \to 0
\end{align*}
from Lemma \ref{lem:ses1}. We claim that $f$ (resp.\ $g'$) is a left (resp.\ right) $\add(\cA\setminus X)$-approximation of $X$, and hence $X$ is mutable with both $\mu_X^+(\cA)$ and $\mu_X^-(\cA)$ corresponding to the triangulation $\mu_\gamma(T)$.

Indeed, to show $f$ is a left $\add(\cA\setminus X)$-approximation, it is enough to show 
\begin{align*}
    \Ext^1\left((c,d\right),\gamma)=0
\end{align*}
for all $\gamma \in T\backslash (a,b)$. Since $T\backslash (a,b) \cup (c,d)$ is a triangulation, no arc $\gamma \in T \backslash (a,b)$ can cross $(c,d)$ and so this follows from Proposition \ref{prop:exts}. Similarly, to show $g'$ is a right $\add(\cA\setminus X)$-approximation, it is enough to show 
\begin{align*}
    \Ext^1(\gamma,(c,d))=0
\end{align*}
for all $\gamma \in T\backslash (a,b)$. Since $T\backslash (a,b) \cup (c,d)$ is a triangulation, no arc $\gamma \in T\backslash (c,d)$ can cross $(a,b)$ and so this follows from Proposition \ref{prop:exts}.

The cases for $\gamma=(c,d)$ in Lemma \ref{lem:ses1}, and the two cases in Lemma \ref{lem:ses2} are all similar. \\

So we now show that if an arc $\gamma$ is not mutable then the corresponding object $X$ is not mutable. If $\gamma$ is not mutable, then it is a wrapping arc by \cite[Proposition 2.8]{BG}. Assume that $\gamma$ is a wrapping arc for a left fountain at $n$. The case for a right fountain follows symmetrically. 

Since $\gamma$ is a wrapping arc, there exist infinitely many finite arcs $(m,n) \in T$ with $m<n$. Moreover, for any such $(m,n)$, Lemma \ref{L:infinite to finite} and the comments thereafter show that there is a nonzero morphism 
\[
f \colon (-\infty,n) \to (m,n)
\]
where $1 \mapsto x$. If this were to factor through another indecomposable $(s,t) \in T$, then we would have nonzero maps
\[
(-\infty,n) \to (s,t) \quad \text{and} \quad (s,t) \to (m,n).
\]
In particular, if $s=-\infty$, Lemmas \ref{L: infinite to infinite} and \ref{L:infinite to finite} show that $n \leq t \leq n$ and hence $(s,t)=\gamma$. Thus $f$ does not factor through any infinite arc in $T \backslash \gamma$.

If $(s,t)$ is a finite arc, then $n \leq t$ and the map $(-\infty,n) \to (s,t)$ is determined by $1 \mapsto \alpha xy^{t-n}$ for some $\alpha \in \C$ by Lemma \ref{L:infinite to finite}. Since $f$ is nonzero, the map $(s,t) \to (m,n)$ cannot send $x$ to 0, and so Lemma \ref{L:finite to finite} shows that $s \leq m$ and $n \leq t \leq n$. So any left $\add(\cA\setminus X)$-approximation of $\gamma$ must be a finite direct sum which, for each $(m,n) \in T$ with $m < n$, contains some $(s,n) \in T \backslash \gamma$ with $s \leq m$. But there are infinitely many such $(m,n) \in T$ and so this is not possible and hence $\gamma$ has no left $\add(\cA\setminus X)$-approximation as required.
\end{proof}

  \begin{ex} \label{Ex: mutable arcs} In the first triangulation, the infinite arc $(-\infty,0)$ is mutable as it can be replaced with the arc $(-1,1)$. In the second triangulation, the arc $(-\infty,0)$ is not mutable.
 \[\begin{tikzpicture}[scale=0.6]
\draw (-6,0) -- (6,0);
\draw (0,3) -- (-1,0);
\draw (0,3) -- (0,0);
\draw (0,3) -- (1,0);
\draw (-1,0) arc (0:180:1);
\draw (-1,0) arc (0:180:1.5);
\draw (-1,0) arc (0:180:0.5);
\draw (3,0) arc (0:180:1);
\draw (4,0) arc (0:180:1.5);
\draw (2,0) arc (0:180:0.5);
\node at (5,0.75) {$\cdots$};
\node at (-5,0.75) {$\cdots$};
\node at (0,3.3) {$\scriptstyle -\infty$};
\node at (0,-0.4) {$\scriptstyle 0$};
\node at (-1,-0.4) {$\scriptstyle -1$};
\node at (1,-0.4) {$\scriptstyle 1$};
\end{tikzpicture}
\begin{tikzpicture}[scale=0.6]
\draw (-6,0) -- (6,0);
\draw (0,3) -- (0,0);
\draw (0,0) arc (0:180:1);
\draw (0,0) arc (0:180:1.5);
\draw (0,0) arc (0:180:0.5);
\draw (2,0) arc (0:180:1);
\draw (3,0) arc (0:180:1.5);
\draw (1,0) arc (0:180:0.5);
\node at (4,0.75) {$\cdots$};
\node at (-4,0.75) {$\cdots$};
\node at (0,3.3) {$\scriptstyle -\infty$};
\node at (0,-0.4) {$\scriptstyle 0$};
\node at (-1,-0.4) {$\scriptstyle -1$};
\node at (1,-0.4) {$\scriptstyle 1$};
\end{tikzpicture}\]
\end{ex}

\begin{ex} \label{Ex: why both sides}
 Consider the arc $\gamma=(-\infty,-1)$ in the first triangulation of Example \ref{Ex: mutable arcs}. The proof of Theorem \ref{t: mutation} shows that $\gamma$ has no left $\add(\cA\setminus X)$-approximation, so if we naively extend Iyama-Yoshino mutation, the left mutation would be 
  \[\begin{tikzpicture}[scale=0.6]
\draw (-6,0) -- (6,0);
\draw (0,3) -- (0,0);
\draw (0,3) -- (1,0);
\draw (-1,0) arc (0:180:1);
\draw (-1,0) arc (0:180:1.5);
\draw (-1,0) arc (0:180:0.5);
\draw (3,0) arc (0:180:1);
\draw (4,0) arc (0:180:1.5);
\draw (2,0) arc (0:180:0.5);
\node at (5,0.75) {$\cdots$};
\node at (-5,0.75) {$\cdots$};
\node at (0,3.3) {$\scriptstyle -\infty$};
\node at (0,-0.4) {$\scriptstyle 0$};
\node at (-1,-0.4) {$\scriptstyle -1$};
\node at (1,-0.4) {$\scriptstyle 1$};
\end{tikzpicture}
\]
which is not a triangulation/maximal almost rigid. This is somewhat expected as approximations are key to mutation. However, what is perhaps more surprising is that even though a right approximation does exist (it is given by $(-\infty, 0)$), the result under right mutation is the same as the left mutation and so is not maximal almost rigid. This is why we restrict mutation to objects which have both a left and right approximation.
 
 \end{ex}

 By virtue of Theorem \ref{t: mutation} the concepts of infinite, completed and transfinite mutations of the completed $\infty$-gon from \cite{BG} can be directly extended to the mutation of maximal almost rigid subcategories of $\cC_2$.
 
 We can define the exchange graph of maximal almost rigid subcategories of $\cC_2$ as the graph with vertices corresponding to maximal almost rigid subcategories, and with edges given by transfinite mutations.
 
\begin{cor} \label{c: connected}
    The exchange graph of maximal almost rigid subcategories of $\cC_2$ is connected.
\end{cor}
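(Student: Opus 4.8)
The plan is to derive Corollary \ref{c: connected} as a direct consequence of the dictionary established in Theorem \ref{t: mutation} together with the combinatorial connectivity result Theorem \ref{thm: connect} (i.e.\ \cite[Theorem 6.9]{BG}). The key observation is that Theorem \ref{thm: cat of triang} gives a bijection between maximal almost rigid subcategories $\cA$ of $\cC_2$ and triangulations $T$ of the completed $\infty$-gon, while Theorem \ref{t: mutation} shows this bijection is compatible with mutation: an indecomposable $X \in \cA$ is mutable precisely when the corresponding arc $\gamma$ is mutable, and $\mu_X(\cA)$ corresponds to $\mu_\gamma(T)$.

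First I would note that the exchange graph $\mathcal{G}_{\mathrm{mar}}$ of maximal almost rigid subcategories of $\cC_2$ (vertices = maximal almost rigid subcategories, edges = single transfinite mutations) and the exchange graph $\mathcal{G}_{\Delta}$ of triangulations of the completed $\infty$-gon (vertices = triangulations, edges = single transfinite mutations in the sense of \cite[Definition 6.1]{BG}) are isomorphic as graphs. This is because a transfinite mutation is, by definition, an (ordinal-indexed) sequence of elementary mutations, and Theorem \ref{t: mutation} identifies elementary mutations on the two sides: at each step the mutable indecomposables correspond exactly to the mutable arcs, and mutating at corresponding objects/arcs yields corresponding subcategories/triangulations. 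Hence a transfinite mutation of $\cA$ exists and reaches $\cA'$ if and only if the associated transfinite mutation of $T$ exists and reaches $T'$, so the identification of vertices from Theorem \ref{thm: cat of triang} extends to an isomorphism of exchange graphs.

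Given this graph isomorphism, connectivity transfers immediately: by Theorem \ref{thm: connect}, $\mathcal{G}_{\Delta}$ is connected under transfinite mutations, hence so is $\mathcal{G}_{\mathrm{mar}}$. Concretely, given two maximal almost rigid subcategories $\cA_1, \cA_2$, let $T_1, T_2$ be the corresponding triangulations; Theorem \ref{thm: connect} provides a transfinite sequence of mutations from $T_1$ to $T_2$, and transporting this sequence through Theorem \ref{t: mutation} yields a transfinite sequence of mutations from $\cA_1$ to $\cA_2$.

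The only genuinely delicate point — and the one I would want to state carefully rather than wave at — is that the correspondence of Theorem \ref{t: mutation} must be checked to be stable along a transfinite sequence, not merely at a single step: one must confirm that after performing any initial segment of a transfinite mutation the resulting subcategory is still maximal almost rigid and still corresponds (via Theorem \ref{thm: cat of triang}) to the triangulation obtained by the corresponding initial segment of combinatorial mutations, so that Theorem \ref{t: mutation} can be reapplied at the next step. Since Theorem \ref{thm: cat of triang} characterises maximal almost rigid subcategories purely in terms of being (the additive closure of) a triangulation, and the combinatorial side is closed under transfinite mutation by \cite{BG}, this bookkeeping goes through, and no further categorical input is needed.
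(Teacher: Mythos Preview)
Your proposal is correct and matches the paper's approach exactly: the paper also derives the corollary immediately from Theorem~\ref{t: mutation} together with Theorem~\ref{thm: connect} (i.e.\ \cite[Theorem 6.9]{BG}), using the bijection of Theorem~\ref{thm: cat of triang} to transport transfinite mutations between the categorical and combinatorial settings. Your additional remark about stability along transfinite sequences is a reasonable sanity check, but the paper simply absorbs this into the definition of transfinite mutation for maximal almost rigid subcategories via the correspondence, so no extra argument is required.
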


Since our notion of mutation restricts to Iyama-Yoshino mutation in the cluster tilting setting, Theorem \ref{t: mutation} also shows that the mutation of these subcategories (where possible) is controlled by the combinatorics of the completed $\infty$-gon.

In summary, we see that the category $\cC_2$ arising from the $A_\infty$ curve singularity naturally exhibits cluster combinatorics induced from both the $\infty$-gon and its completion. 

\appendix \label{appendix}

\section{} \label{appendix:A}

We continue to use our convention of identifying indecomposable objects in $\CC_2 = \CM^\Z(R)$ by arcs in the completed $\infty$-gon.

\subsection{Hom-calculations}\label{S:Hom-calculations}

We describe the homomorphisms in $\cC_2$. Overall, we have the following description of $\Hom$-spaces:

\begin{proposition}\label{P:Homspaces}
Let $(a,b)$ and $(c,d)$ be indecomposable objects in $\cC_2$. Then
\[
    \Hom_R((a,b),(c,d)) \cong \begin{cases}
                            \C^2 & \text{if $-\infty < a \leq c$ and $b \leq d$}\\
                            0 & \text{if $-\infty = a \leq c$ and $d<b$}\\
                            0 & \text{if $d<a$}\\
                            \C & \text{else.}
                                            \end{cases}
\]
\end{proposition}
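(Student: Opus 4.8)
The plan is to compute $\Hom_R((a,b),(c,d))$ directly using the explicit description of indecomposable modules from Proposition \ref{prop: classify objects}, splitting into the natural cases according to whether each arc is finite or infinite. Recall that a finite arc $(a,b)$ corresponds to the module $(x,y^k)(j)$ with $k = b-a-1 \geq 0$ and $j = 1-b$, realised concretely as the ideal $(x,y^k) \subset \C[x,y]/(x^2)$ with the appropriate grading shift, while an infinite arc $(-\infty,b)$ corresponds to $\C[y](j)$ with $j = -b$. Since every such module is either cyclic ($\C[y](j)$) or generated by two elements ($(x,y^k)(j)$, generated by $x$ and $y^k$), a homomorphism is determined by the images of these generators, subject to the relations $x^2 = 0$ and, in the $\C[y]$ case, $x = 0$. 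The grading constrains which elements of the target can appear: the image of a generator in internal degree $e$ must land in the degree-$e$ component of the target module. So the entire computation reduces to bookkeeping of graded pieces.

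First I would treat the finite-to-finite case, $(a,b)$ and $(c,d)$ both finite. A map $(x,y^k)(j) \to (x,y^\ell)(j')$ is given by $x \mapsto p$, $y^k \mapsto q$ with $p$ of degree matching $x$ and $q$ of degree matching $y^k$, and compatibility $y^k \cdot p = x \cdot q$ forces $p \in x\C[x,y]$ (since $x^2=0$, actually $p$ is a multiple of $x$, and then $xq$ must equal $y^k p$). A short degree count shows: when $-\infty < a \leq c$ and $b \leq d$ there are two independent choices (roughly, $x \mapsto \lambda x$ and the map sending generators ``further in''), giving $\C^2$; when $d < a$ the degrees are incompatible and we get $0$; in the remaining overlap configuration we get $\C$. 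Next, the infinite-to-finite case $(-\infty,b) \to (c,d)$: here the source is $\C[y](j)$, so $x$ acts as zero, and the map is determined by the image of $1$; this image must be killed by $x$, hence lies in the socle-type part $x\C[x,y]$ of the target, which is again $\C[y]$-like, and a degree count shows it is $\C$ when $b \leq d$ and $0$ when $d < b$ — matching the second and (part of the) fourth cases. Then finite-to-infinite $(a,b)\to(-\infty,d)$ with $a$ finite: the target $\C[y](j')$ has $x$ acting as zero, so $x \mapsto 0$ automatically, and the map is determined by the image of $y^k$; this is $\C$ when $b \leq d$ (the ``else'' case) and $0$ when $d < a$, i.e.\ $d < b$ is not by itself obstructive here but $d<a$ is. Finally infinite-to-infinite $(-\infty,b) \to (-\infty,d)$: a map $\C[y](j) \to \C[y](j')$ is determined by the image of $1$, giving $\C$ precisely when the degree shift is permissible, i.e.\ when $b \leq d$ (the ``else'' case), and $0$ otherwise.

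The main obstacle I anticipate is not any single case but getting the case boundaries to line up exactly with the four-way split in the statement — in particular verifying that the jump from $\C$ to $\C^2$ happens precisely at ``$-\infty < a$ and $b \leq d$'' and nowhere else, and confirming that the two ``$0$'' cases ($-\infty = a \leq c$, $d<b$; and $d<a$) are genuinely disjoint from the rest and from each other. This requires care because the condition $b \leq d$ appears in several cases with different outcomes depending on the relation between $a$ and $c$ and on whether $a = -\infty$. I would organise the proof as a table indexed by (finite/infinite source) $\times$ (finite/infinite target), do the degree count in each cell, and then check at the end that the union of the resulting conditions reproduces the four displayed cases. A useful sanity check along the way is Remark \ref{R:stable Hom} together with Proposition \ref{prop:exts}: the stable Hom spaces $\underline{\Hom}((a,b),(c,d)) \cong \Ext^1((a,b),(c,d)(-1))$ must be recovered by subtracting the contribution of maps factoring through projectives $R(j)$ (the boundary arcs), which gives an independent consistency check on the $\C^2$ versus $\C$ distinction.
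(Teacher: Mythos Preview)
Your proposal is correct and follows essentially the same approach as the paper: the paper proves Proposition~\ref{P:Homspaces} by splitting into the four cases (infinite-to-infinite, finite-to-infinite, infinite-to-finite, finite-to-finite) in Lemmas~\ref{L: infinite to infinite}--\ref{L:finite to finite}, in each case determining a homomorphism by the images of the generators $1$ or $x,y^k$, imposing the relation coming from $x$ (e.g.\ $y^{b-a-1}g(x)=xg(y^{b-a-1})$), and then doing the degree count. Your sanity check via stable Hom and Proposition~\ref{prop:exts} is a nice addition not in the paper; one small wording slip to fix is in the finite-to-infinite case, where the condition for $\C$ is $a\leq d$ (not $b\leq d$), as you yourself note immediately afterwards.
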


We now tackle Proposition \ref{P:Homspaces} case by case, and give an explicit basis of the Hom-space in each case. Recall from Section \ref{S:model} that the module $(a,b)$ is, up to isomorphism, given by
$(x,y^{b-a-1})(-b+1)$ if $(a,b)$ is finite
and $\C[y](-b)$  if $(a,b)$ is infinite.

\begin{lemma} \label{L: infinite to infinite} The spaces of homomorphisms between infinite arcs are given by:
\begin{align}
    \Hom_R \left( \C[y](j), \C[y] \right)= 
    \begin{cases} 
    \C \qquad \text{ if } j \geq 0 \\
    0 \qquad \text{otherwise.}
    \end{cases}
\end{align}
In the case where there is a map, $1 \mapsto \lambda y^j$ for some $\lambda \in \C$.
\end{lemma}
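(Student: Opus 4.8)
The plan is to compute the graded $R$-module homomorphisms between the modules $\bC[y](j)$ and $\bC[y]$ directly. Recall that $R = \bC[x,y]/(x^2)$ acts on $\bC[y]$ by letting $x$ act as zero, so $\bC[y] \cong R/(x)$ as a graded $R$-module. A graded homomorphism $\varphi \colon \bC[y](j) \to \bC[y]$ is determined by the image of the generator $1 \in \bC[y](j)$, which sits in degree $-j$ (since $\bC[y](j)_n = \bC[y]_{j+n}$, so $1$ lives in degree $-j$). The element $\varphi(1)$ must therefore be a homogeneous element of $\bC[y]$ of degree $-j$.

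First I would observe that $\bC[y]$ is concentrated in non-positive degrees: $y^k$ has degree $-k$, so the degree $-j$ part of $\bC[y]$ is spanned by $y^j$ if $j \geq 0$ and is zero if $j < 0$. Hence if $j < 0$ there are no nonzero graded homomorphisms and $\Hom_R(\bC[y](j),\bC[y]) = 0$. If $j \geq 0$, then $\varphi(1) = \lambda y^j$ for some $\lambda \in \bC$; conversely, I must check that $1 \mapsto \lambda y^j$ extends to a well-defined $R$-module homomorphism. Since $\bC[y](j)$ is generated by $1$ subject only to the relation $x \cdot 1 = 0$ (the $R$-module structure factors through $R/(x) = \bC[y]$), and $x$ annihilates $\bC[y]$ as well, the assignment $1 \mapsto \lambda y^j$ respects this relation and extends uniquely to a graded $R$-linear map sending $y^n \mapsto \lambda y^{n+j}$. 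This gives $\Hom_R(\bC[y](j),\bC[y]) \cong \bC$ when $j \geq 0$, with the stated form of the map.

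There is no real obstacle here — the only point requiring a line of care is the bookkeeping of the grading shift convention, namely that the generator of $\bC[y](j)$ sits in degree $-j$ rather than degree $j$, and correspondingly that a map exists precisely when $j \geq 0$ (so that the target has a nonzero element in the matching degree). I would also note in passing that this is consistent with the combinatorial model: the infinite arc corresponding to $\bC[y](j)$ is $(-\infty, -j)$, and by Proposition \ref{P:Homspaces} we have $\Hom_R((-\infty,-j),(-\infty,0)) \cong \bC$ precisely when $-j \leq 0$, i.e. $j \geq 0$, matching the computation above.
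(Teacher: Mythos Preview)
Your proof is correct and follows essentially the same approach as the paper: a graded $R$-homomorphism is determined by the image of $1$, which must land in degree $-j$ of $\bC[y]$, and that degree is $\bC y^j$ if $j \geq 0$ and zero otherwise. You include a bit more detail (explicitly verifying well-definedness via the relation $x \cdot 1 = 0$ and cross-checking against the combinatorial model), but the core argument is identical.
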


\begin{proof}
Any such morphism is determined by where $1 \in \C[y](j)$ is mapped to, and to be a degree zero morphism, it must map to an element of degree $-j$. If $j < 0$, the only possibility is $0$, and when $j \geq 0$, this is $\lambda y^j$ for $\lambda \in \C$.
\end{proof}

We can restate Lemma \ref{L: infinite to infinite} as follows: Let $(-\infty,b)$ and $(-\infty,d)$ be indecomposable objects in $\CM^\Z(R)$. Then
\[
    \Hom_R((-\infty,b),(-\infty,d)) \cong \begin{cases}
                                                \C & \text{if $b \leq d$}\\
                                                0 & \text{else}
                                            \end{cases}
\]
where any existing map is determined by $1 \mapsto \lambda y^{d-b}$ for some $\lambda \in \C$. The case where non-trivial morphisms exist is depicted in Figure \ref{fig:infinite to infinite}.
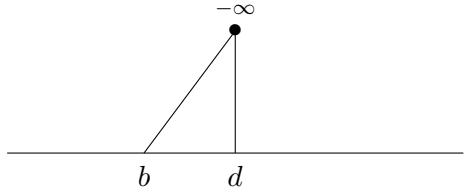
\begin{figure}[H]
\centering

\begin{tikzpicture}[scale=0.6]
\draw (-5,0) -- (5,0);
\draw (-2,0) -- (0,2.7);
\draw (0,0) -- (0,2.7);
\node at (-2,-0.5) {$b$};
\node at (0,-0.5) {$d$};
\node at (0,3.2) {$\scriptstyle -\infty$};
\node at (0, 2.7) {$\bullet$};
\end{tikzpicture}   
 \caption{$  \Hom_R((-\infty,b),(-\infty,d)) \cong  \C$.}
 \label{fig:infinite to infinite}
\end{figure}

\begin{lemma} \label{L: finite to infinite} The spaces of homomorphisms from a finite arc to an infinite arc are given by:
\begin{align}
    \Hom_R \left( (x,y^i)(j), \C[y] \right) \cong 
    \begin{cases} 
    \C \qquad \text{ if } j \geq -i \\
    0 \qquad \text{otherwise.}
    \end{cases}
\end{align}
In the case where there is a map, $x \mapsto 0$ and $y^i \mapsto \lambda y^{i+j}$ for some $\lambda \in \C$.
\end{lemma}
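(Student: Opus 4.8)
The plan is to compute this Hom-space by hand from an explicit presentation of the module, keeping the appendix self-contained. Concretely, viewing $N := (x,y^i)$ as the literal ideal $xR + y^iR$ inside $R = \C[x,y]/(x^2)$, one checks directly that $N$ is generated as an $R$-module by $x$, in degree $1$, and $y^i$, in degree $-i$, and that its only defining relations are $x\cdot x = 0$ and $x\cdot y^i - y^i\cdot x = 0$ (compare also the matrix factorization in Proposition \ref{prop: classify objects}(3)). Hence, for any $j$, a graded $R$-module homomorphism $\phi\colon N(j) \to \C[y]$ is precisely the choice of a pair
\[
    p := \phi(x) \in \C[y]_{1-j}, \qquad q := \phi(y^i) \in \C[y]_{-i-j},
\]
subject to the two conditions $x\cdot p = 0$ and $x\cdot q = y^i\cdot p$ inherited from the relations.

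Next I would exploit two features of the target. First, $x$ acts as zero on $\C[y] = R/(x)$, so $x\cdot p = 0$ holds automatically and the second condition collapses to $y^i\cdot p = 0$; since $\C[y]$ is an integral domain this forces $p = 0$, i.e.\ every morphism sends $x\mapsto 0$. Second, $\C[y]$ is concentrated in non-positive degrees, with $\C[y]_{-m} = \C\,y^m$ for $m\geq 0$ and $\C[y]_{-m}=0$ for $m<0$. Therefore $\phi\mapsto q$ identifies $\Hom_R(N(j),\C[y])$ with the graded piece $\C[y]_{-i-j}$, which is one-dimensional, spanned by $y^{i+j}$, when $i+j\geq 0$, and zero otherwise. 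This is exactly the asserted statement, the nonzero morphisms being $x\mapsto 0$ and $y^i\mapsto\lambda y^{i+j}$ for $\lambda\in\C$.

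The only delicate point is the internal-degree bookkeeping, since the statement hinges on where the generators $x$ and $y^i$ of the shifted module $N(j)$ sit: with the convention $M(j)_n = M_{j+n}$ they lie in degrees $1-j$ and $-i-j$ respectively, and one must track that when $j\geq 1$ a hypothetical nonzero $p$ is a nonzero scalar multiple of $y^{j-1}$, so $y^i p$ is a nonzero scalar multiple of $y^{i+j-1}$ (note $i+j-1\geq 0$), which is what genuinely kills $p$. Beyond this there is no real obstacle; the argument is a short direct verification. The boundary case $i=0$, where $N(j)=(x,y^0)(j)=R(j)$ is free of rank one on a generator in degree $-j$, is subsumed by the same computation and in any case follows at once from $\Hom_R(R(j),\C[y])\cong\C[y]_{-j}$.
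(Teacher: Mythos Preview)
Your argument is correct and follows essentially the same route as the paper's proof: a morphism is determined by the images of the generators $x$ and $y^i$, the relation $y^i g(x) = x g(y^i)$ together with $x$ acting trivially on $\C[y]$ forces $g(x)=0$, and then the degree constraint on $g(y^i)$ gives the result. Your version is more explicit about the presentation and the degree bookkeeping (including the separate discussion of when $p$ could a priori be nonzero), but the underlying idea is identical.
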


\begin{proof}
Any such morphism $g$ is determined by where $x$ and $y^i$ are mapped to. Moreover, as $x$ annihilates $\C[y] = R/(x)$, $g$ must satisfy
\[
y^ig(x)=xg(y^i)=0
\]
and thus $g(x)=0$. Now, to be a degree zero morphism $y^i$ must map to an element of degree $-i-j$, which is precisely one of the form $ \lambda y^{i+j}$ if $j \geq -i$ and zero otherwise.
\end{proof}

Lemma \ref{L: finite to infinite} can be restated as follows:  Let $(a,b)$ and $(-\infty,d)$ be indecomposable objects in $\CM^\Z(R)$, with $(a,b)$ a finite arc. Then
\[
    \Hom_R((a,b),(-\infty,d)) \cong \begin{cases}
                                \C & \text{if $a \leq d$}\\
                                                0 & \text{else}
                                            \end{cases}
\]
where any existing map is determined by $x \mapsto 0$ and $y^{b-a-1} \mapsto\lambda y^{d-a}$ for some $\lambda \in \C$.
In other words, there are morphisms if the finite arc starts at or to the left of the infinite arc, as depicted in Figure \ref{fig:finite to infinite}.

\begin{figure}[H]
\begin{center}
\begin{tikzpicture}
\node at (-3,-.5) {
\begin{tikzpicture}[scale=0.6]
\draw (-4,0) -- (4,0);
\draw (0,2.7) -- (0,0);
\draw (-1,0) arc (0:180:.75);
\node at (-2.7,-.5) {$a$};
\node at (-1,-.5) {$b$};
\node at (0,3.2) {$\scriptstyle -\infty$};
\node at (0, 2.7) {$\bullet$};
\node at (0,-.5) {$d$};
\end{tikzpicture}};
\node at (3,-.5) {
\begin{tikzpicture}[scale=0.6]
\draw (-4,0) -- (4,0);
\draw (0,2.7) -- (0,0);
\draw (1,0) arc (0:180:1.5);
\node at (-2,-.5) {$a$};
\node at (1.25,-.5) {$b$};
\node at (0,3.2) {$\scriptstyle -\infty$};
\node at (0, 2.7) {$\bullet$};
\node at (0,-.5) {$d$};
\end{tikzpicture}};
\end{tikzpicture}
\end{center}
  \caption{$ \Hom_R((a,b),(-\infty,d))\cong   \C$.}   
  \label{fig:finite to infinite}
 \end{figure}
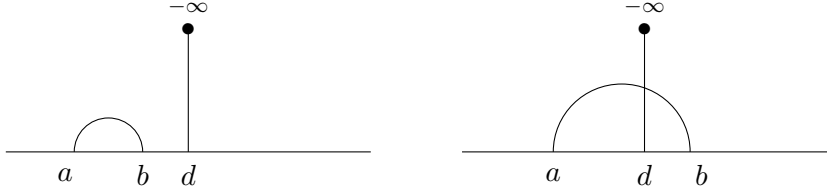

\begin{lemma} \label{L:infinite to finite}The space of homomorphisms from an infinite arc to a finite arc is given by:
\begin{align}
    \Hom_R \left( \C[y](j), (x,y^i) 
    \right) \cong
    \begin{cases} 
    \C \qquad \text{ if } j \geq -1 \\
    0 \qquad \text{otherwise.}
    \end{cases}
\end{align}
In the case where there is a map, $1 \mapsto \lambda xy^{j+1}$ for some $\lambda \in \C$.
\end{lemma}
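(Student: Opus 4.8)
The plan is to imitate the proofs of Lemmas \ref{L: infinite to infinite}--\ref{L: finite to infinite}. Since $\C[y](j) \cong R/(x)$ with cyclic generator $1$ sitting in degree $-j$, a homomorphism $g \colon \C[y](j) \to (x,y^i)$ is determined by $g(1)$, which must be a homogeneous element of degree $-j$ in $(x,y^i)$ annihilated by $x$ (the latter being forced by $x \cdot 1 = 0$ in $R/(x)$).

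First I would identify $\operatorname{ann}_R(x)$. Writing a general element of $R = \C[x,y]/(x^2)$ as $f(y) + x\,h(y)$ with $f,h \in \C[y]$, one has $x\bigl(f(y)+x\,h(y)\bigr) = x\,f(y)$, which vanishes in $R$ exactly when $f = 0$; hence $\operatorname{ann}_R(x) = x\,\C[y] = xR$, and this submodule is contained in the ideal $(x,y^i)$. So $g(1)$ must lie in the degree $-j$ graded piece of $xR$. Since $\deg(xy^m) = 1-m$, that piece is spanned by $xy^{j+1}$ when $j+1 \geq 0$ and is zero otherwise. Conversely, for $j \geq -1$ the assignment $1 \mapsto \lambda\, xy^{j+1}$ genuinely descends to a graded $R$-linear map $R/(x) \to (x,y^i)$ because $x \cdot xy^{j+1} = 0$; this exhibits the one-dimensional Hom-space with the asserted basis, while for $j \leq -2$ there is nothing of the correct degree and the Hom-space is $0$.

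The computation is routine; the only points to watch are the grading bookkeeping -- that $1 \in \C[y](j)$ has degree $-j$ and that $xy^{j+1}$ is the unique (up to scalar) monomial of $xR$ in degree $-j$ when it exists -- together with the observation that $xR \subseteq (x,y^i)$, so that requiring the image to land in the ideal rather than in all of $R$ imposes no extra constraint beyond being killed by $x$. One could alternatively note the graded isomorphism $xR \cong \C[y](-1)$ and deduce the statement directly from Lemma \ref{L: infinite to infinite}, but the direct argument above is the shortest route.
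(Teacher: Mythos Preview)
Your proof is correct and follows essentially the same approach as the paper's: both observe that a morphism is determined by $g(1)$, which must be annihilated by $x$ and hence lie in $xR \subseteq (x,y^i)$, and then conclude by locating the degree $-j$ piece of $xR$. Your write-up is simply more explicit in computing the annihilator and checking the degree bookkeeping, and you additionally note the alternative route via $xR \cong \C[y](-1)$, which the paper does not mention.
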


\begin{proof}
Any such morphism $g$ is determined by where $1 \in \C[y](j)$ is mapped to. Moreover, as $x$ annihilates $\C[y]=R/(x)$ then $xg(1)=g(x \cdot 1)=0$, and thus $x$ must map to an element in $(x)$. If $j<-1$, then the only option is zero. If $j \geq -1$ then $1 \mapsto \lambda xy^{j+1}$ for some $\lambda \in \C$.
\end{proof}

Lemma \ref{L:infinite to finite} can be restated as follows: Let $(-\infty,b)$ and $(c,d)$ be indecomposable objects in $\CM^\Z(R)$, with $(c,d)$ a finite arc. Then
\[
    \Hom_R((-\infty,b),(c,d)) \cong \begin{cases}
                                                \C & \text{if $b \leq d$}\\
                                                0 & \text{else},
                                            \end{cases}
\]
where any existing map is determined by $1 \mapsto \lambda xy^{d-b}$ for some $\lambda \in \C$.
In other words, there are morphisms if the finite arc ends at or to the right of the infinite arc, as shown in Figure \ref{fig:infinite to finite}.

\begin{figure}[H]
\begin{center}
\begin{tikzpicture}
\node at (3,-.5) {
\begin{tikzpicture}[scale=0.6]
\draw (-4,0) -- (4,0);
\draw (0,2.7) -- (0,0);
\draw (2.5,0) arc (0:180:.75);
\node at (1.05,-.5) {$c$};
\node at (2.5,-.5) {$d$};
\node at (0,3.2) {$\scriptstyle -\infty$};
\node at (0, 2.7) {$\bullet$};
\node at (0,-.5) {$b$};
\end{tikzpicture}};
\node at (-3,-.5) {
\begin{tikzpicture}[scale=0.6]
\draw (-4,0) -- (4,0);
\draw (0,2.7) -- (0,0);
\draw (1,0) arc (0:180:1.5);
\node at (-2,-.5) {$c$};
\node at (1.25,-.5) {$d$};
\node at (0,3.2) {$\scriptstyle -\infty$};
\node at (0, 2.7) {$\bullet$};
\node at (0,-.5) {$b$};
\end{tikzpicture}};
\end{tikzpicture}
\end{center}
  \caption{$\Hom_R((-\infty,b),(c,d)) \cong \C$.}   
  \label{fig:infinite to finite}
 \end{figure}
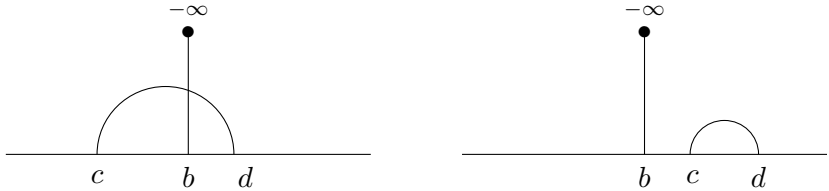

\begin{lemma}\label{L:finite to finite}
The spaces of homomorphisms between finite arcs are given by
\[
     \Hom_R((a,b),(c,d))
                 \cong \begin{cases}
                                \C^2 & \text{if $a \leq c$ and $b \leq d$}\\
                                \C & \text{if $a \leq c$ and $d<b$ or $c<a \leq d$}\\
                                0 & \text{else; i.e.\ if $d<a$.}\\
                                          \end{cases}
\]
\end{lemma}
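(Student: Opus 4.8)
The plan is to reduce the statement to an explicit computation with the matrix factorisation description of the finite arcs. Recall from Section \ref{S:model} that the finite arc $(a,b)$ corresponds, up to isomorphism, to the shifted ideal $(x,y^{b-a-1})(-b+1)$, and similarly $(c,d)$ corresponds to $(x,y^{d-c-1})(-d+1)$. Setting $i = b-a-1$ and $k = d-c-1$, a graded $R$-module homomorphism $g \colon (x,y^i)(-b+1) \to (x,y^k)(-d+1)$ is determined by the images $g(x)$ and $g(y^i)$, which must be homogeneous elements of $(x,y^k)(-d+1)$ of the appropriate degrees, and must satisfy the single relation coming from the syzygy of the source module. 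First I would spell out the degrees: since $x$ has degree $1$ and $y$ degree $-1$, inside $(x,y^i)(-b+1)$ the generator (corresponding to) $x$ sits in degree $1 - (-b+1) = b$ adjusted by the shift — more carefully, one tracks that $g(x)$ must land in the degree-$(b - d + \text{(stuff)})$ component of the target. The cleanest bookkeeping is to write both modules concretely as in Section \ref{S:dg algebra}, namely as spans $\langle xy^i, xy^{i-1}, \dots \rangle$ together with the $y^j$-part, so that the degree constraints become visible.

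The key structural fact to exploit is the relation $x \cdot y^i = y^i \cdot x$ together with $x^2 = 0$: any homomorphism $g$ satisfies $y^i g(x) = x g(y^i)$ and $x g(x) = 0$, so $g(x) \in (x)$, i.e. $g(x) = \mu\, x y^{p}$ for a suitable exponent $p$ forced by degree, and then $g(y^i)$ is constrained modulo the freedom of adding a multiple of $x y^{q}$ (since $y^i$ times such a term is $x y^{i+q}$, which must match). I would organise the count as follows: (i) a "generic" component, spanned by the map that is an honest multiple of the inclusion $y^i \mapsto (\text{power of } y) \cdot y^k$ and correspondingly on $x$, which exists precisely when the arcs are nested with $a \le c$ and $b \le d$, contributing one dimension; and (ii) an "$x$-supported" component, spanned by the map $x \mapsto 0$, $y^i \mapsto \mu\, x y^{?}$ (or $1 \mapsto x$-type maps as in Lemmas \ref{L:infinite to finite}, \ref{L: finite to infinite}), which exists in a wider range of positions, contributing the other dimension. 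Adding these up and checking for which $(a,b,c,d)$ each summand is nonzero yields exactly the three cases in the statement: both present ($\C^2$) when $a \le c$ and $b \le d$; exactly one present ($\C$) when $a \le c$ and $d < b$, or when $c < a \le d$; and neither present ($0$) when $d < a$.

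The main obstacle I expect is purely clerical rather than conceptual: keeping the degree shifts and the two possible conventions (module-of-the-ideal versus cokernel-of-the-matrix-factorisation) consistent, so that the inequalities on $a,b,c,d$ come out matching the stated ranges rather than off by one. A useful sanity check at the end is internal consistency with the already-proved neighbouring lemmas: the finite-to-finite formula should degenerate correctly in the limits where one endpoint tends to $-\infty$, recovering Lemmas \ref{L: finite to infinite} and \ref{L:infinite to finite}, and it should be compatible, via $\underline{\Hom}(X,Y) \cong \Ext^1(X,Y(-1))$ and the shift description at the end of Section \ref{S:model}, with the $\Ext^1$-computation of Proposition \ref{prop:exts}; the symmetry of $\Ext^1$ for finite arcs forces the $\Hom$-dimensions to fit the expected $2$-Calabi-Yau pattern after stabilisation, which pins down any remaining sign/shift ambiguity. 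Once the conventions are fixed, each of the finitely many cases is settled by writing down the generating homomorphism(s) explicitly and verifying linear independence, exactly as in the proofs of the preceding lemmas, so I would simply record "by direct calculation" after exhibiting the bases.
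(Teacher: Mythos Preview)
Your proposal is correct and follows essentially the same route as the paper's proof: both determine a homomorphism by the images of the two generators $x$ and $y^{b-a-1}$, use the relation $y^{b-a-1}g(x)=xg(y^{b-a-1})$ (together with $x^2=0$) to force $g(x)\in(x)$ and to link the coefficients, and then identify the two basis maps --- the ``honest inclusion'' map (available iff $a\le c$ and $b\le d$) and the $x$-supported map $y^{b-a-1}\mapsto \gamma\, xy^{d-a}$ (available iff $a\le d$) --- whose presence/absence gives the three cases. The paper simply writes out the general ansatz $g(x)=\alpha xy^{d-b}+\beta y^{d-b-1}$, $g(y^{b-a-1})=\gamma xy^{d-a}+\delta y^{d-a-1}$ and reads off $\beta=0$, $\alpha=\delta$, which is exactly the decomposition you describe; your additional consistency checks are not needed but do no harm.
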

\begin{proof}
Recall that a map from $(a,b)$ to $(c,d)$ is precisely a degree zero homomorphism
\[
g \colon (x,y^{b-a-1})(1-b) \to (x,y^{d-c-1})(1-d)
\]
and any such map is determined by $g(x)$ and $g(y^{b-a-1})$. Since $g$ is degree preserving, there must exist $\alpha, \beta, \gamma, \delta \in \bC$ such that $g(x)=\alpha xy^{d-b}+ \beta y^{d-b-1}$ and $g(y^{b-a-1})=\gamma xy^{d-a}+\delta y^{d-a-1}$. Moreover,
    \[
       y^{b-a-1}(\alpha xy^{d-b} + \beta y^{d-b-1})=y^{b-a-1}g(x) =g(xy^{b-a-1}) = xg(y^{b-a-1}) = \delta x y^{d-a-1}
    \]
and thus $\beta=0$ and $\alpha=\delta$. It follows that the Hom space is at most two-dimensional.

Note that $\gamma$ can be nonzero if and only if $xy^{d-a} \in (x,y^{d-c-1})(1-d)$ which is if and only if $a \leq d$. Further, $\alpha=\delta$ can be nonzero if and only if both $xy^{d-b}$ and $y^{d-a-1}$ lie in $(x,y^{d-c-1})(1-d)$. The first is satisfied if and only if $b \leq d$ while the second holds if and only if $d-a-1 \geq d-c-1$, or equivalently, $a \leq c$. 

It follows that the morphisms can be described as follows:
\begin{enumerate}
    \item If $a \leq d$ then there are maps determined by $x \mapsto 0, y^{b-a-1} \mapsto \gamma xy^{d-a}$ for each $\gamma \in \C$.
    \item If $a \leq c$ and $b \leq d$ then for each $\alpha \in \C$, there is a map taking $x \mapsto \alpha xy^{d-b}$ and $y^{b-a-1} \mapsto \alpha y^{d-a-1}$.
\end{enumerate}
Notice that if condition $(2)$ is satisfied, then so is $(1)$, and these are precisely the conditions for the space of homomorphisms to be two-dimensional in the above. When $(1)$ is satisfied but $(2)$ is not this gives the case where the space of homomorphisms is one-dimensional, and when neither are satisfied, all homomorphisms are $0$.
\end{proof}

Some of the cases from Lemma \ref{L:finite to finite} are depicted in Figures \ref{fig:Case1}-\ref{fig:Case3}.

\begin{figure}[H]
\begin{center}
\begin{tikzpicture}
\node at (-3,0) {
\begin{tikzpicture}[scale=0.6]
\draw (-4,0) -- (4,0);
\draw (-1,0) arc (0:180:1.3);
\draw (2,0) arc (0:180:.8);
\node at (-3.6,-.5) {$a$};
\node at (-1,-.5) {$b$};
\node at (.4,-.5) {$c$};
\node at (2,-.5) {$d$};
\end{tikzpicture}};
\node at (3,0) {
\begin{tikzpicture}[scale=0.6]
\draw (-4,0) -- (4,0);
\draw (0,0) arc (0:180:1.5);
\draw (2,0) arc (0:180:1.5);
\node at (-3,-.5) {$a$};
\node at (0,-.5) {$b$};
\node at (-1,-.5) {$c$};
\node at (2,-.5) {$d$};
\end{tikzpicture}};
\end{tikzpicture}
\end{center}
  \caption{$\Hom_R((a,b),(c,d))   \cong  \C^2$.}  
  \label{fig:Case1}
 \end{figure}

\begin{figure}[H]
\begin{center}
\begin{tikzpicture}
\node at (-4,0) {
\begin{tikzpicture}[scale=0.45]
\draw (-3,0) -- (4,0);
\draw (3,0) arc (0:180:2);
\draw (1.5,0) arc (0:180:.8);
\node at (-1,-.5) {$a$};
\node at (3,-.5) {$b$};
\node at (1.5,-.5) {$d$};
\node at (-.1,-.5) {$c$};
\end{tikzpicture}};
\node at (0,0) {
\begin{tikzpicture}[scale=0.45]
\draw (-3,0) -- (4,0);
\draw (3,0) arc (0:180:2);
\draw (1,0) arc (0:180:.5);
\node at (-1,-.5) {$c$};
\node at (3,-.5) {$d$};
\node at (1,-.5) {$b$};
\node at (-.1,-.5) {$a$};
\end{tikzpicture}};
\node at (4,0) {
\begin{tikzpicture}[scale=0.45]
\draw (-4,0) -- (4,0);
\draw (0,0) arc (0:180:1.5);
\draw (2,0) arc (0:180:1.5);
\node at (-3,-.5) {$c$};
\node at (0,-.5) {$d$};
\node at (-1,-.5) {$a$};
\node at (2,-.5) {$b$};
\end{tikzpicture}};
\end{tikzpicture}
\end{center}
  \caption{$\Hom_R((a,b),(c,d)) \cong  \C$.}
  \label{fig:Case2}
\end{figure}

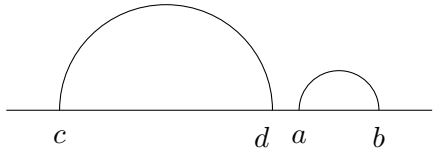
\begin{figure}[H]
\begin{center}
\begin{tikzpicture}[scale=0.7]
\draw (-4,0) -- (4,0);
\draw (1,0) arc (0:180:2);
\draw (3,0) arc (0:180:.75);
\node at (-3,-.5) {$c$};
\node at (.8,-.5) {$d$};
\node at (1.5,-.5) {$a$};
\node at (3,-.5) {$b$};
\end{tikzpicture}
\end{center}
      \caption{$\Hom_R((a,b),(c,d))  =  0$.}   
      \label{fig:Case3}
\end{figure}

\subsection{Cluster tilting subcategories}

If $T$ is a cluster tilting subcategory of $\cC_2$ then it is also maximal rigid. By Proposition \ref{prop:exts}, its indecomposable objects must therefore correspond to a maximal set of mutually non-crossing arcs containing at most one infinite arc.

As a consequence, the maximal rigid subcategories $\mc{T}$ of $\cC_2$ are of the following form:
\begin{enumerate}
\item $\mc{T}$ corresponds to a triangulation of the completed $\infty$-gon that is locally finite. In this case, it contains only finite arcs. 
\begin{center}
\begin{tikzpicture}[scale=0.5]
\draw (-5,0) -- (5,0);
\draw (1,0) arc (0:180:1);
\draw (1,0) arc (0:180:1.5);
\draw (2,0) arc (0:180:2);
\draw (2,0) arc (0:180:2.5);
\draw (3,0) arc (0:180:3);
\node at (4,1.5) {$\cdots$};
\node at (-4,1.5) {$\cdots$};
\end{tikzpicture}
\end{center}

\item $\mc{T}$ corresponds to a maximal set of non-crossing finite arcs containing a split fountain, i.e., a left fountain at $a$ and a right fountain at $b$ with $a < b$ together with a unique infinite arc given by either $(-\infty,a)$ or $(-\infty,b)$.

\begin{center}
    \begin{tikzpicture}[scale=0.5]
\draw (-6,0) -- (6,0);
\draw (1,0) arc (0:180:1);
\draw (0,3) -- (1,0);
\draw (-1,0) arc (0:180:1);
\draw (-1,0) arc (0:180:1.5);
\draw (-1,0) arc (0:180:0.5);
\draw (3,0) arc (0:180:1);
\draw (4,0) arc (0:180:1.5);
\draw (2,0) arc (0:180:0.5);
\node at (5,0.75) {$\cdots$};
\node at (-5,0.75) {$\cdots$};
\end{tikzpicture}
\end{center}
\item $\mc{T}$ corresponds to a triangulation of the completed $\infty$-gon containing a fountain at $a$, and thus also containing the arc $(-\infty,a)$.

\begin{center}
    \begin{tikzpicture}[scale=0.5]
\draw (-6,0) -- (6,0);
\draw (0,3) -- (0,0);
\draw (0,0) arc (0:180:1);
\draw (0,0) arc (0:180:1.5);
\draw (0,0) arc (0:180:0.5);
\draw (2,0) arc (0:180:1);
\draw (3,0) arc (0:180:1.5);
\draw (1,0) arc (0:180:0.5);
\node at (4,0.75) {$\cdots$};
\node at (-4,0.75) {$\cdots$};
\end{tikzpicture}
\end{center}
\end{enumerate}

We will now provide an alternative proof of Theorem \ref{t: complete ct} using the calculations from Appendix \ref{S:Hom-calculations}. Note first that Case (2) in the above list of maximal rigid subcategories is not cluster tilting: Assume that the infinite arc $(-\infty,a)$ lies in $\cT$. Then $(-\infty,b)$ lies in $\{M \in \CC \mid \Ext^1_\CC(M, \T)=0\}$, but not in $\cT$, which thus is not cluster tilting. The case where $(-\infty,b) \in \cT$ follows symmetrically. We now rule out Case (1). 


\begin{proposition} \label{P: leapfrog not cluster tilting}
Let $\mathcal{T}$ in $\cC_2$ be a maximally rigid subcategory given by a locally finite triangulation $T$. Then $\mathcal{T}$ is not precovering, and thus not functorially finite.
\end{proposition}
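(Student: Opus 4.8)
The plan is to exhibit an object $X$ of $\cC_2$ that admits no right $\mathcal{T}$-approximation, where $\mathcal{T}$ corresponds to a locally finite triangulation $T$. The natural candidate is an infinite arc: take $X = \C[y]$, i.e.\ the infinite arc $(-\infty, 0)$ (any infinite arc works by applying the shift functor). Since $T$ is a locally finite triangulation of the completed $\infty$-gon, it consists entirely of finite arcs, and moreover for each $a \in \bZ$ there are only finitely many arcs of $T$ incident to $a$. I will show that no finite direct sum of objects of $\mathcal{T}$ can be a right $\mathcal{T}$-approximation of $X$.

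The key computational input is Appendix \ref{S:Hom-calculations}, in particular Lemma \ref{L:infinite to finite}: for a finite arc $(c,d)$ one has $\Hom_R((-\infty, 0), (c,d)) \cong \C$ precisely when $0 \leq d$, with the map determined by $1 \mapsto \lambda x y^{d}$; and Lemma \ref{L: infinite to infinite} shows $\Hom_R((-\infty,0),(-\infty,d)) \neq 0$ iff $0 \le d$. So first I would observe: since $T$ is a triangulation, it contains arcs $(c_n, d_n)$ with $d_n \to +\infty$ (indeed, because $T$ triangulates all of the $\infty$-gon and is locally finite, arbitrarily large integers occur as right endpoints of arcs in $T$; concretely, for each $N$ there must be an arc of $T$ crossing or bounding the region near $N$, and since $N-1$ and $N+1$ both lie on finitely many arcs, some arc of $T$ has right endpoint $\ge N$). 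Hence $\Hom_R(X, -)$ is nonzero on infinitely many indecomposables of $\mathcal{T}$. Next, suppose $f\colon X \to T_0$ with $T_0 \in \add\mathcal{T}$ were a right $\mathcal{T}$-approximation; write $T_0 = \bigoplus_{i=1}^m (c_i, d_i)$, a \emph{finite} sum, and set $D = \max_i d_i$. Pick an arc $(c, d) \in T$ with $d > D$ and a nonzero map $g\colon X \to (c,d)$; I claim $g$ does not factor through $f$.

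The heart of the argument is this factorisation obstruction, and it is the step I expect to be the main obstacle. Any map $h\colon T_0 \to (c,d)$ decomposes over the summands $(c_i,d_i) \to (c,d)$, so I must rule out such a composite $X \to (c_i,d_i) \to (c,d)$ being equal to a nonzero multiple of $g$. The map $X \to (c_i, d_i)$ sends $1 \mapsto \mu\, x y^{d_i}$ by Lemma \ref{L:infinite to finite}. Following with $(c_i,d_i) \to (c,d)$ and tracking where $x$ (hence $xy^{d_i}$) lands via Lemma \ref{L:finite to finite}: the image of $x$ under a finite-to-finite map lies in $(x, y^{d - c_i - 1})(1-d)$, and one checks the composite sends $1 \mapsto \nu\, x y^{d}$ \emph{only if} $d_i \geq d$. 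Wait---more carefully, the composite $1 \mapsto \mu x y^{d_i} \mapsto$ (image), and since $d_i \le D < d$, degree considerations (the target graded piece in degree $-d$ that $g$ hits is simply not reachable from $x y^{d_i}$ sitting in a lower-degree component after a degree-$0$ map when $d_i < d$) force the composite to be zero, or at any rate to miss $xy^d$. Thus every map $X \to (c,d)$ factoring through $f$ is zero, contradicting that $g \neq 0$ factors through $f$. Therefore $X$ has no right $\mathcal{T}$-approximation, so $\mathcal{T}$ is not precovering, hence not functorially finite, and in particular not cluster tilting. Combined with the elimination of Case (2) above, this completes the alternative proof of Theorem \ref{t: complete ct}.

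One technical point to nail down in the writeup: justifying that right endpoints of arcs in a locally finite triangulation are unbounded above. If instead all arcs of $T$ had right endpoint $\le D_0$, then no arc covers the "boundary edge" $(D_0, D_0+1), (D_0+1, D_0+2), \dots$ issue---actually every consecutive pair $(n,n+1)$ for $n \geq D_0$ is a boundary arc, which is fine, but then the vertex $D_0 + 1$ lies on the arc $(D_0, D_0+1)$ and $(D_0+1, D_0+2)$ only, and the region to the right of $D_0$ bounded by these is not triangulated into triangles with the accumulation point or with farther vertices---contradicting maximality. I would phrase this cleanly: in a triangulation of the completed $\infty$-gon, if $(-\infty, a) \notin T$ for all $a$ (which holds here since $T$ is locally finite and, were a wrapping arc present, $T$ would not be locally finite by \cite[Lemma 1.10]{BG}), then there is no largest right endpoint, because the arc separating $-\infty$ from the far-right vertices would itself be infinite. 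This gives the required infinitude and the argument goes through.
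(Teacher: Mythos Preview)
Your argument has a terminology slip and a genuine gap in the key step.

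\textbf{Terminology.} A map $f\colon X \to T_0$ through which every $g\colon X \to T'$ (with $T' \in \mathcal{T}$) factors is a \emph{left} $\mathcal{T}$-approximation (a preenvelope), not a right one. So what you are attempting to show is that $\mathcal{T}$ is not preenveloping; this still yields ``not functorially finite'', but it is not the statement ``not precovering'' in the proposition. The paper works on the other side: it takes maps from arcs of $T$ \emph{into} the infinite arc and shows these do not all factor through a finite sum.

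\textbf{The gap.} Your claim that the composite
\[
X \xrightarrow{\ 1\mapsto \mu xy^{d_i}\ } (c_i,d_i) \longrightarrow (c,d)
\]
vanishes whenever $d_i < d$ is false. By the description of morphisms in Lemma~\ref{L:finite to finite}, if $c_i \le c$ and $d_i \le d$ there is a map $(c_i,d_i)\to(c,d)$ with $x \mapsto \alpha xy^{d-d_i}$, and then the composite sends $1 \mapsto \mu\alpha\, xy^{d}$, which is precisely a nonzero multiple of your $g$. The degree bookkeeping does not obstruct this: $xy^{d_i}$ sits in degree $0$ of $(c_i,d_i)$ and is carried to $xy^d$ in degree $0$ of $(c,d)$. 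So controlling only the right endpoint ($d > D$) is not enough; you must also arrange $c < \min_i c_i$, so that the condition $c_i \le c$ for the ``type (2)'' map fails for every summand.

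This is exactly what the paper does (dually): it takes the \emph{longest} arc $\alpha_n = (a,b)$ among the summands and, using local finiteness, produces $\beta = (c,d)\in T$ with $c<a<b<d$; then every map $\beta \to \alpha_i$ kills $x$, and the factorisation is obstructed. Your approach becomes correct once you choose $(c,d)\in T$ strictly longer than all $(c_i,d_i)$ on \emph{both} sides, which the leapfrog structure of a locally finite triangulation guarantees (cf.\ \cite[Lemma 3.7]{GG1}). With that fix your argument is the mirror image of the paper's and proves non-preenveloping; to match the proposition verbatim you should instead run the dual version with maps $(c,d)\to X$, where the obstruction is $d > \max_i d_i$.
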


\begin{proof}

Let $\gamma$ be an infinite arc. In particular, $\gamma$ is not in $T$. Then by Lemma \ref{L: finite to infinite} and \cite[Lemma 3.7]{GG1} there exist infinitely many arcs $\alpha_i \in T$ such that $\Hom(\alpha_i,\gamma) \neq 0$. Assume for a contradiction that there is a (minimal) $\mathcal{T}$-precover $f \colon \alpha \to \gamma$, where $\alpha = \sum_{i = 1}^n \alpha_i$ for $\alpha_i \in T$.
Assume that $\alpha_n =(a,b) $ is the longest arc in $\{\alpha_1, \ldots, \alpha_n\}$, that is, $b-a$ is maximal.

Now, by \cite[Lemma 3.7]{GG1} and since $T$ is locally finite there exists an arc $\beta = (c,d) \in T$ such that $c < a < b < d$. Then there exists a non-zero morphism from $(c,d)$ to $\gamma$ by Lemma \ref{L: finite to infinite}, since there are morphisms from $\alpha_i$ to $\gamma$ for all $1 \leq i \leq n$. For any map $h$ from $\beta$ to an $\alpha_i$ the image lies in the ideal generated by $xy$ (up to some shift). But then the composition $f \circ h$ has to be $0$. 

So any maximal rigid category containing a leapfrog is not precovering, and therefore not cluster tilting.
\end{proof}

Let $\mathcal{C}$ be any Frobenius category. In order to apply results from \cite{HJ-cat} directly, we first observe that we can lift precovers, and symmetrically preenvelopes, from the stable category $\underline{\mathcal{C}}$ to $\mathcal{C}$. We denote by
\[
    \pi \colon \mathcal{C} \to \underline{\mathcal{C}}
\]
the canonical projection functor.
\begin{lemma} \label{l:lifting covers}
    Let $\mathcal{T}$ be a subcategory of the stable category $\underline{\mathcal{C}}$ containing all projective-injective objects. Let $M$ be an object in $\mathcal{C}$, and assume $\pi(M)$ has a $\pi(\mathcal{T})$-precover (respectively $\pi(\mathcal{T})$-preenvelope) in $\underline{\mathcal{C}}$. Then $M$ has a $\mathcal{T}$-precover (respectively $\mathcal{T}$-preenvelope) in $\mathcal{C}$.
\end{lemma}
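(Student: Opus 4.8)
The plan is to exploit that the projection $\pi\colon \mathcal{C}\to\underline{\mathcal{C}}$ is the identity on objects and full, with kernel on morphisms exactly those factoring through a projective-injective, and to repair a lifted precover by adjoining a projective component that absorbs everything $\pi$ kills. The hypothesis that $\mathcal{T}$ contains all projective-injectives is precisely what makes such a component available inside $\mathcal{T}$.

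For the precover statement I would argue as follows. Let $\underline{f}\colon \pi(T_0)\to \pi(M)$ be the given $\pi(\mathcal{T})$-precover, with $T_0\in\mathcal{T}$ (any object of $\pi(\mathcal{T})$ has this form, as $\mathcal{T}$ is closed under finite direct sums), and lift it by fullness of $\pi$ to a morphism $f_0\colon T_0\to M$ in $\mathcal{C}$. Since $\mathcal{C}$ is Frobenius it has enough projectives, so there is a deflation $p\colon P\twoheadrightarrow M$ with $P$ projective, hence also injective, hence $P\in\mathcal{T}$. I claim $f=(f_0\ \ p)\colon T_0\oplus P\to M$ is a $\mathcal{T}$-precover. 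To check this, take an arbitrary $g\colon T\to M$ with $T\in\mathcal{T}$. Applying $\pi$ and using that $\underline{f}$ is a $\pi(\mathcal{T})$-precover gives $\underline{h}\colon \pi(T)\to\pi(T_0)$ with $\underline{f}\,\underline{h}=\pi(g)$; lift $\underline{h}$ to $h\colon T\to T_0$ in $\mathcal{C}$. Then $\pi(g-f_0 h)=0$, so $g-f_0 h = b a$ for some projective-injective $Q$ and morphisms $a\colon T\to Q$, $b\colon Q\to M$. As $Q$ is projective and $p$ is a deflation, $b=p c$ for some $c\colon Q\to P$, and therefore $g=f_0 h + p(ca)=f\circ\binom{h}{ca}$, so $g$ factors through $f$. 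Since $T_0\oplus P\in\mathcal{T}$, this proves $M$ has a $\mathcal{T}$-precover.

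The preenvelope statement is handled dually, using that $\mathcal{C}$ has enough injectives: choose an inflation $\iota\colon M\hookrightarrow I$ with $I$ projective-injective, lift a $\pi(\mathcal{T})$-preenvelope $\underline{f}\colon\pi(M)\to\pi(T_0)$ to $f_0\colon M\to T_0$, and set $f=\binom{f_0}{\iota}\colon M\to T_0\oplus I$. Given $g\colon M\to T$ with $T\in\mathcal{T}$, factor $\pi(g)=\underline{h}\,\underline{f}$, lift $\underline{h}$ to $h\colon T_0\to T$, write $g-h f_0 = b'a'$ through a projective-injective $Q$, extend $a'\colon M\to Q$ along the inflation $\iota$ to $c'\colon I\to Q$ with $a'=c'\iota$, and conclude $g=(h\ \ b'c')\circ f$. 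Thus $f$ is a $\mathcal{T}$-preenvelope.

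I do not expect a serious obstacle here: the content is entirely formal. The one point that requires care is the correction step — identifying that $\ker(\pi)$ on Hom-groups consists exactly of maps factoring through projective-injectives, and that a \emph{single} projective cover (respectively injective hull) of $M$ suffices to absorb all such maps, which works because of the lifting property against deflations (respectively the extension property against inflations). Everything else is bookkeeping with the matrix components of $f$.
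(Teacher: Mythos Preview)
Your proof is correct and follows essentially the same route as the paper's: lift the stable precover to $f_0$, adjoin a projective cover $p\colon P\to M$, and show that any $g\colon T\to M$ factors through $(f_0\ p)$ by correcting the difference $g-f_0h$ through $P$. The only cosmetic difference is that the paper phrases the correction step via ``$p$ is a projective precover'' rather than via the lifting property of the projective $Q$ against the deflation $p$; your formulation in exact-category language is in fact the more general one, since the lemma is stated for an arbitrary Frobenius category.
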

    
\begin{proof}
    We only show the claim for precovers, the proof for preenvelopes follows symmetrically. Assume $\pi(f) \colon \pi(T) \to \pi(M)$ is a $\pi(\mathcal{T})$-precover in $\underline{\mathcal{C}}$. The map $\pi(f)$ is induced by a map $f \colon T \to M$ in $\mathcal{C}$. Since our ring $R$ is Gorenstein, every object in $\mathcal{C}$ has a projective precover. Let $p \colon P_M \to M$ be a projective precover of $M$.
    
    We show that the map 
    \[
        \begin{bmatrix}f & p \end{bmatrix} \colon T \oplus P_M \to M
    \]
    is a $\mathcal{T}$-precover of $M$ in $\mathcal{C}$. Since $\mathcal{T}$ contains all projective-injective objects, $T \oplus P_M$ is an object in $\mathcal{T}$. Suppose now we have an object $T'$ in $\mathcal{T}$ with a map $g \colon T' \to M$. Since $\pi(f)$ is a $\pi(\mathcal{T})$-precover in $\underline{\mathcal{C}}$ there exists a map $\pi(h) \colon \pi(T') \to \pi(T)$ such that the following diagram in $\underline{\mathcal{C}}$ commutes:
    \[
        \xymatrix{& \pi(T') \ar[d]^-{\pi(g)} \ar[ld]_-{\pi(h)} \\
                    \pi(T) \ar[r]_-{\pi(f)}&  \pi(M).}
    \]
    Consider the following lift of this diagram in $\mathcal{C}$
    \[
        \xymatrix{& T' \ar[d]^-{g} \ar[ld]_-{\begin{bmatrix}h \\0 \end{bmatrix}} \\
                    T \oplus P_M \ar[r]_-{\begin{bmatrix}f & p \end{bmatrix}}&  M.}
    \]
    We have 
    \[
        \begin{bmatrix}f & p\end{bmatrix}\begin{bmatrix}h \\ 0\end{bmatrix} = fh.
    \]
Now $\pi(g) = \pi(fh)$, so $g = fh + \delta$, for some $\delta \colon T' \to M$ factoring through a projective-injective object $Q$ in $\mathcal{C}$. We have the commutative diagram
\[
    \xymatrix{T'\ar[d] \ar[r]^-{\delta} & M \\ Q \ar[ru] \ar@{-->}[r] & P_M \ar[u]_-p,}
\]
where the dashed arrow exists since $p \colon P_M \to M$ is a projective precover. Therefore $\delta$ factors through $P_M$
    \[
        \xymatrix{T' \ar[rr]^-{\delta} \ar[rd]_-{\delta'} & & M \\
                    & P_M \ar[ru]_-p&.}
    \]
Consider the commutative diagram
    \[
        \xymatrix{& T' \ar[d]^-{g} \ar[ld]_-{\begin{bmatrix}h \\ \delta' \end{bmatrix}} \\
                    T \oplus P_M \ar[r]_-{\begin{bmatrix}f & p \end{bmatrix}}&  M.}
    \]
in $\mathcal{C}$. We have
    \[
        \begin{bmatrix}f & p\end{bmatrix}\begin{bmatrix}h \\ \delta' \end{bmatrix} = fh + p\delta' = fh + \delta = g.
    \]
Therefore, $g$ factors through the map $\begin{bmatrix} f & p \end{bmatrix}$, which shows the claim.
\end{proof}

\begin{proposition} \label{P: fountain cluster tilting}
Let $\mathcal{T}$ in $\cC_2$ be a maximal rigid subcategory containing a fountain. Then $\mathcal{T}$ is functorially finite.
\end{proposition}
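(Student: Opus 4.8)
The plan is to reduce the statement to the corresponding fact in the stable category $\underline{\cC}_2$, where it is known, and then lift along the projection $\pi \colon \cC_2 \to \underline{\cC}_2$ using Lemma \ref{l:lifting covers}. Concretely, $\mathcal{T}$ is a maximal rigid subcategory of $\cC_2$ corresponding (by the classification recalled above) to a triangulation $T$ of the completed $\infty$-gon with a fountain at some $a \in \bZ$; since a triangulation, being a maximal set of non-crossing arcs, necessarily contains every boundary arc $(n,n+1)$, the subcategory $\mathcal{T}$ contains all the projective-injective objects $R(n)$. Its image $\pi(\mathcal{T})$ in $\underline{\cC}_2$ is the additive closure of the internal (non-boundary) arcs of $T$.

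Next I would transport $\pi(\mathcal{T})$ across the equivalence $\underline{\cC}_2 \cong \overline{\cC}_M$ of Proposition \ref{P:PY}. Under this equivalence, internal arcs of the completed $\infty$-gon correspond to the indecomposable objects of $\overline{\cC}_M$, and $\pi(\mathcal{T})$ is sent to the additive subcategory on the arcs of $T$; since $T$ contains a fountain, \cite[Theorem 4.4]{PY21} identifies this as a \emph{cluster tilting} subcategory of $\overline{\cC}_M$. In particular it is functorially finite there, and hence $\pi(\mathcal{T})$ is functorially finite in $\underline{\cC}_2$: every object of $\underline{\cC}_2$ admits a $\pi(\mathcal{T})$-precover and a $\pi(\mathcal{T})$-preenvelope.

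Finally, I would apply Lemma \ref{l:lifting covers}. Given any $M \in \cC_2$, the object $\pi(M)$ has a $\pi(\mathcal{T})$-precover and a $\pi(\mathcal{T})$-preenvelope in $\underline{\cC}_2$ by the previous step; since $\mathcal{T}$ contains all projective-injectives, Lemma \ref{l:lifting covers} produces a $\mathcal{T}$-precover and a $\mathcal{T}$-preenvelope of $M$ in $\cC_2$. As $M$ was arbitrary, $\mathcal{T}$ is both precovering and preenveloping, i.e.\ functorially finite.

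The main point requiring care is the compatibility of the combinatorial model with the equivalence $\underline{\cC}_2 \cong \overline{\cC}_M$ — namely that a fountain triangulation corresponds on the nose to a cluster tilting, and not merely rigid, subcategory of $\overline{\cC}_M$; this is exactly what \cite[Theorem 4.4]{PY21} supplies. Alternatively, and more in the spirit of the rest of this appendix, one can argue entirely inside $\cC_2$: for each indecomposable $M$ one writes down an explicit finite $\mathcal{T}$-precover using the $\Hom$-computations of Appendix \ref{S:Hom-calculations}, the crucial feature being that the presence of the fountain at $a$ makes a finite collection of maps from fountain arcs into $M$ cofinal among all maps from $T$ into $M$ — precisely the cofinality that fails in the locally finite case treated in Proposition \ref{P: leapfrog not cluster tilting}. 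Carrying this out case-by-case (finite arc; infinite arc ending to the right of $a$; infinite arc ending to the left of $a$) is the harder route, but uses only the lemmas of Appendix \ref{S:Hom-calculations}.
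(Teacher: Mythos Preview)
Your argument is correct, but it takes a different route from the paper's. The paper's proof is precisely the ``harder route'' you sketch at the end: for finite arcs it cites \cite{HJ-cat} (lifted via Lemma~\ref{l:lifting covers}), and for an infinite arc $\gamma=(-\infty,l)$ it carries out an explicit precover/preenvelope construction using the $\Hom$-computations of Appendix~\ref{S:Hom-calculations}. Concretely, it sets $F=\{(0,b)\in T\mid b\geq 0\}\cup\{(-\infty,0)\}$, observes that only finitely many arcs in $T\setminus F$ admit nontrivial stable maps to or from $\gamma$, and then shows that all stable maps from $F$ to $\gamma$ factor through $(-\infty,0)$ (giving the precover) while all maps from $\gamma$ into $F$ factor through a finite sum of fountain arcs (giving the preenvelope).

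Your main approach via \cite[Theorem~4.4]{PY21} is slicker, but note the context: this proposition sits in an appendix whose stated purpose is to reprove Theorem~\ref{t: complete ct} directly from the calculations in Appendix~\ref{S:Hom-calculations}, and the proof of Theorem~\ref{t: complete ct} in the main text already goes through \cite[Theorem~4.4]{PY21}. So while your argument is logically sound, it essentially reproduces the main-text proof rather than supplying the intended independent verification. What the paper's version buys is a self-contained check (modulo \cite{HJ-cat} for finite arcs) relying only on the explicit $\Hom$-spaces; what yours buys is uniformity and brevity, at the cost of that independence.
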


\begin{proof}
Let $T$ be the triangulation of the completed $\infty$-gon corresponding to $\mathcal{T}$. We assume without loss of generality that $T$ has a fountain at $0$. Precovering and preenveloping for finite arcs in $\underline{\cC}_2$ was shown in \cite{HJ-cat} and follows in our situation from Lemma \ref{l:lifting covers}, by lifting their precovers and adding the infinite arc. It remains to show that any infinite arc in the completed $\infty$-gon has a precover and a preenvelope in $\mathcal{T}$. Let thus $\gamma = (-\infty,l)$ be an infinite arc. We assume that $l > 0$, the case $l < 0$ follows symmetrically, and the case $l = 0$ is trivial, given that $(-\infty,0) \in T$.\\

By Lemma \ref{l:lifting covers}, it suffices to work in the stable category $\underline{C}_2$. Let $F = \{(0,b) \in T \mid b \geq 0\} \cup \{(-\infty, 0)\}$. By Proposition \ref{prop:exts} and Remark \ref{R:stable Hom} there are only finitely many arcs $\alpha \in T \setminus F$ having stable non-trivial morphisms from or to $\gamma$. Thus, we are done if we can show that there is an $F$-preenvelope and an $F$-precover of $\gamma$. \\
\emph{Precover.} By Lemmas \ref{L: infinite to infinite} and \ref{L: finite to infinite}, all stable morphisms from $F$ to $\gamma$ factor through $(-\infty,0)$, which thus provides a precover. \\
\emph{Preenvelope.} Note that there is no map from $\gamma$ to the unique infinite arc $(-\infty,0)$ in $F$. Thus, we only need to consider finite arcs. Let $F_{\leq l} = \bigoplus \{(0,b) \in T \mid b \leq l\}$ and set $M = \bigoplus_{\alpha \in F_{\leq l}} \alpha$. By \ref{L:infinite to finite} and the explicit description of morphisms after Lemma \ref{L:finite to finite}, all morphisms from $\gamma$ to arcs in $F$ factor through $M$.
\end{proof}


\newcommand{\etalchar}[1]{$^{#1}$}

    \end{document}